\theoremstyle{plain}
\newtheorem{proposition}{Proposition}[section]
\newtheorem{theoremb}[proposition]{Theorem}
\newtheorem{lemma}[proposition]{Lemma}
\newtheorem{corollary}[proposition]{Corollary}
\theoremstyle{definition}
\newtheorem{definition}[proposition]{Definition}
\newtheorem{example}[proposition]{Example}
\theoremstyle{remark}
\newcommand{\secref}[1]{Section~\ref{#1}}
\newcommand{\thmref}[1]{Theorem~\ref{#1}}
\newcommand{\propref}[1]{Proposition~\ref{#1}}
\newcommand{\corref}[1]{Corollary~\ref{#1}}
\newcommand{\defref}[1]{Definition~\ref{#1}}
\def\cA{{\mathcal A}}
\def\cC{{\mathcal C}}
\def\cE{{\mathcal E}}
\def\cG{{\mathcal G}}
\def\cI{{\mathcal I}}
\def\cL{{\mathcal L}}
\newcommand{\bz}{\mathbb Z}
\newcommand{\bq}{\mathbb Q}
\def\L{\mathbb{L}}
\def\Q{\mathbb{Q}}
\def\im{{\rm Im\,}}
\def\hL{{\widehat{\mathbb L}}}
\def\ad{{\rm ad}}
\def\id{{\rm id}}
\def\ov{\overline}
    \newcommand{\lasu}{{\mathfrak{L}}}
    \newcommand{\fracd}{{\mathfrak{D}}}
    \newcommand{\sdelta}{\Delta}
    \newcommand{\sfdelta}{\dot\Delta}
 \newcommand{\lib }{\mathbb{L}}
\newcommand{\catcdga}{\operatorname{{\bf CDGA}}}
\newcommand{\CDGC}{\operatorname{{\bf CDGC}}}
\newcommand{\catdglco}{\operatorname{{\bf DGLC}}}
\newcommand{\catdgl}{\operatorname{{\bf DGL}}}
\newcommand{\catss}{\operatorname{{\bf SimpSet}}}
 \newcommand{\MC}{\operatorname{{\rm MC}}}
\newcommand{\mc}{{\MC}}
   \newcommand{\libc}{{\widehat\lib}}
\def\Lc{\mathbb{L}^{\!c}}
\title{Lie models of simplicial sets and representability of the Quillen functor}
\author{Urtzi Buijs, Yves F\'elix, Aniceto Murillo and Daniel Tanr\'e\footnote{The  authors have been partially supported by the MINECO grant MTM2013-41768-P. The first author has also been partially supported by a U-Mobility Programme grant of the
European Union's Seventh Framework Programme, G.A. no. 246550 and by the Ram\'on y Cajal MINECO program.  The first and third authors have also been partially supported by the Junta de Andaluc{\'\i}a grant FQM-213. \vskip 1pt {\em 2010 Mathematics Subject
Classification}. Primary: 55P62; Secondary: 55P30.\vskip
 1pt
 {\em Key words and phrases}: Rational homotopy theory. Lie models of Simplicial sets. Quillen functor.}}
\begin{document}

\maketitle

\begin{abstract}
Extending the model of the interval, we explicitly define for each $n\ge 0$ a free complete differential graded Lie algebra $\lasu_n$ generated by the simplices of $\Delta^n$, with desuspended degrees,   in which the vertices are Maurer-Cartan elements and the differential extends the simplicial chain complex of the standard n-simplex. The family $\{\lasu_\bullet\}_{n\ge 0}$  is endowed with a cosimplicial differential graded Lie algebra structure which we use to construct two adjoint functors
$$\xymatrix{ \catss& \catdgl \ar@<1ex>[l]^(.40){\langle\,\cdot\,\rangle}
\ar@<1ex>[l];[]^(.55){\lasu}\\}
$$given by  $\langle L\rangle_\bullet=\catdgl (\lasu_\bullet,L)$ and $
\lasu(K)=\varinjlim_K\lasu_{\bullet}
$. This new tools let us extend Quillen  rational homotopy theory approach to any simplicial set $K$ whose path components are non necessarily simply connected.

We prove that $\lasu (K)$ contains a model of each component of $K$. When $K$ is a 1-connected  finite simplicial complex, the Quillen model of $K$ can be extracted from $\lasu (K)$.   When $K$ is connected then, for a perturbed differential $\partial_a$, $H_0(\lasu (K),\partial_a)$ is the Malcev Lie completion of $\pi_1(K)$. Analogous results are obtained for the realization $\langle L\rangle$ of any complete DGL.
\end{abstract}

\section*{Introduction}
In \cite{LS}, R. Lawrence and D. Sullivan raise the following observation and subsequent general questions: the rational singular chains on a cellular complex are naturally endowed with a structure of cocommutative, coassociative infinity coalgebra and hence, taking the commutators of a ``generalized bar construction'' it should give rise to a complete  differential graded Lie algebra (DGL henceforth). What is the topological and geometrical meaning of this DGL? Are there closed formulae for its differential? Allowing 1-cells, what is the relation of this DGL with the fundamental group of the given complex? In the same reference they carefully construct such a DGL for the interval.

 Here we attack these problems for any finite simplical complex $K$. In fact, we construct  in a functorial way
  a complete differential graded Lie algebra   $\lasu(K)=(\widehat{\mathbb L}(V_K), \partial)$ which is the only (up to isomorphism) free complete differential graded Lie algebra  for which
  $V_K$ together with the linear part $\partial_1$ of  $\partial$ is the desuspension of the simplicial chain complex of $K$, and any generator of degree $-1$, (i.e., any $0$-simplex) is a Maurer-Cartan element. The departure point is the construction of a family $\lasu_n:=\lasu(\Delta^n)$ of such DGL's   for the standard simplices, together with coface and codegeneracy operators, for which we prove:

\vspace{3mm}\noindent {\bf Theorem A.} \label{thmintro:simplicialdgl}
{\sl  The family $\lasu_{\bullet}:=\{\lasu({\Delta^n}),\partial\}_{n\ge 0}$ form a cosimplicial differential graded Lie algebra.
}
\vspace{3mm}

We also prove uniqueness of such a family as Theorem \ref{unicidadcompleta} and give as examples, explicit models $\lasu_2$ and $\lasu_3$ for the triangle and the tetrahedron in Proposition \ref{prop:eltriangulo} and Example \ref{ejemplo} respectively.

On the one hand, this cosimplicial structure let us finally define,
$$
\lasu(K)=\varinjlim_K\lasu_{\bullet},
$$
for any simplicial set $K$.  Whenever $K$ is in particular a finite simplicial complex, $\lasu(K)$ is trivially isomorphic to a sub DGL of a certain $\lasu_n$ as $K\subset \Delta^n$ for some $n$.

On the other hand, the cosimplicial DGL $\lasu_{\bullet}$   defines a functor $\langle \,\cdot\,\rangle$  from the category {\bf DGL}  of complete differential graded Lie algebras to the category  {\bf SimpSet} of simplicial sets via
$$\langle \, L\,\rangle = \mbox{\bf DGL}(\lasu_\bullet, L).$$

Then, we prove:

 \vspace{3mm}\noindent {\bf Theorem B.} \label{adjoint}
{\sl The following are adjoint functors $$\xymatrix{ \catss& \catdgl. \ar@<1ex>[l]^(.40){\langle\,\cdot\,\rangle}
\ar@<1ex>[l];[]^(.55){\lasu}\\}
$$
}

These functors not only shed light on our original questions but also let us develop an  Eckmann-Hilton dual of the Sullivan geometrical approach to rational homotopy theory, even for  non simply connected complexes. Here, $\lasu_\bullet$ plays the dual role of the simplicial differential graded algebra  $\mathscr{A}_\bullet=A_{PL}(\Delta^\bullet)$ of PL-forms on the standard simplices  \cite{FHT,Su,tan}. Recall that the Quillen approach \cite{qui} is based on the construction of several equivalences between homotopy categories joining the homotopy category of rational $2$-reduced
simplicial sets and that of $1$-reduced differential graded Lie algebras.

Our work, although using a completely different method, it extends Quillen theory in several ways.

First of all, if $X$ is a simply connected simplicial set, we denote by $Y$  the associated 2-reduced simplicial set. In the text, we abuse notation by  denoting  $\lambda(X)$ the image of $Y$ by the Quillen construction. We prove:

\vspace{3mm}\noindent {\bf Theorem C.}  \label{thmintro:quillen}
{\sl Let $K$ be a 1-connected finite simplicial complex. Then, for any vertex $a\in K$, there is a quasi-isomorphism of
differential graded Lie algebras,
 $$(\lasu(K),\partial_a)\stackrel{\simeq}{\longrightarrow}\lambda (K).$$
 In particular,   $H_n(\lasu (K), \partial_a) \cong \pi_{n+1}(K)\otimes \mathbb Q$, $n\ge 1$.}

\vspace{3mm}

Here $\partial_a=\partial+\ad_a$ is the twisting  of the original differential $\partial$ of $\lasu(K)$  via the Maurer-Cartan element $a$.

\vspace{3mm} On the other hand, concerning the realization functor, we show that, for a non negatively graded  DGL $L=L_{\ge 0}$, there are explicit isomorphisms,
 $$
 \pi_n\langle L\rangle\cong H_{n-1}(L),\quad n\ge 1,
 $$
 in which the group structure on $H_0(L)$ is given by the Baker-Campbell-Hausdorff product (see Proposition \ref{componentes}).

 Moreover, recall the dual Sullivan geometric realization \cite{FHT,Su}  of a commutative differential graded algebra  $A$ given by
$$
\langle A\rangle_S = \mbox{\bf CDGA} (A, \mathscr A_\bullet).$$
Then:

\vspace{3mm}\noindent {\bf Theorem D.} \label{thmintro:realizations} {\sl If $L$ is a non negatively graded finite type DGL, then there is a homotopy equivalence
$\langle L\rangle \simeq \langle {\mathcal C}^*(L)\rangle_S$.}
\vspace{3mm}

Here, ${\mathcal C}^*$ denotes the cochain functor. In particular our realization also equals Quillen realization for finite type $1$-reduced DGL's in view of the main result in   \cite{ma}.
Moreover, in \cite{getz} Getzler has constructed another functor $\langle\,\cdot\,\rangle_G$ from {\bf DGL} to {\bf SimpSet},
$$\langle L\rangle_G = \mc(L\widehat{\otimes} \mathscr A_\bullet)\,,$$
in which $\mc$ denotes the set of Maurer-Cartan elements. By \cite[Proposition 1.1]{getz}, $\langle L\rangle_G = \langle {\mathcal C}^*(L)\rangle_S$ when $L$ is a non negatively graded finite type DGL, and thus, for these DGL's, all realizations coincide (see also \cite[Cor. 1.3]{Ber}).

Finally, we answer the last question which we begin with and explicitly describe the relation between the fundamental group of a complex and its associated DGL:

\vspace{3mm}\noindent {\bf Theorem E.} \label{thmintro:Malcev}
{\sl  If $K$ is a connected  finite simplicial complex, then
$H_{0}(\lasu(K), \partial_a)$
is the Malcev Lie completion of the fundamental group of $K$.}

\vspace{3mm}

This result implies that, unlike the classical Quillen approach and their known interesting extensions \cite{BM,lamar}, our model functor reflects geometrical properties of non-nilpotent spaces.

We also point out that our constructions apply to non necessarily connected complexes. Indeed, for any finite complex $K$, the connected components of $K_+$, the disjoint union of $K$ and a point, correspond to the classes of Maurer-Cartan elements of $\lasu(K)$ modulo the gauge action \cite{ulmc}.
Moreover, if $a$ is a non-zero  Maurer-Cartan element, then $(\lasu(K),\partial_a)$,
is a ``model'' of the corresponding path component of $K$.

On the other hand, in Theorem \ref{localizar} we prove that, for a general DGL $L$, the components of the realization $\langle L\rangle$ correspond also to  the classes of Maurer-Cartan elements of $L$ modulo the gauge action. Moreover, if $a$ is a  Maurer-Cartan element, the corresponding component of $\langle L\rangle$ is the realization $\langle L^{(a)}\rangle$ of the ``localization of $L$ at $a$''. In other words, $\langle L\rangle\simeq \dot\cup_{a\in\widetilde\MC(L)} \langle L^{(a)}\rangle$. This extends \cite[Theorem 5.5]{BM2} to any (complete) DGL.

We remark that the definition and main properties of the model and realization functor purposely stay in the Lie side and do not rely upon any connection to commutative algebras via cochain functors. The main reason is the following: in forthcoming work \cite{ul}, we endow the category of complete DGL's with a model structure which makes the model and realization functors a Quillen pair. This  structure geometrically shapes complete Lie algebras as it is the result of ``transfering'' the classical model structure on simplicial sets. With this, weak equivalences and fibrations  properly contain the weak equivalences and fibrations of the model structure defined in \cite{lamar} via the cochain functor.

\vspace{3mm} This paper is organized as follows:

\secref{sec:prems} is devoted to recall the Baker-Campbell-Hausdorff product and its connection with the basic properties of Lawrence-Sullivan model $\lasu_1$ for the interval \cite{LS}.
 In   \secref{sec:delta} we  extend this construction and build the
 sequence $\{\lasu_n\}_{n\ge 0}$ of  \emph{compatible models} for  $\Delta^n$. We establish some technical properties and also prove uniqueness.

In  Section 3 we prove the existence of compatible symmetric models, i.e., models $(\lasu_n, \partial)$
that are $\Sigma_{n+1}$-DGL with an action that permutes the inclusions of the different copies of $\lasu_{n-1}$. This leads to the existence of the cosimplicial DGL structure on $\lasu_{\bullet}$. All of this proves Theorem A.

In Sections 4 and 5 we   define  the {\em model} $\lasu$ and {\em realization} $\langle \,\cdot\,\rangle$ functors, we prove Theorem B, and
give some properties and examples.

 Section 6 is devoted to introduce the material on  differential graded Lie coalgebras and  the transfer theorem that we use   in Section 7 to build a sequence of compatible models from a transfer process applied to a classical diagram of the form
$$
\xymatrix{
{}
&
 \ar@(ul,dl)@<-5.5ex>[]_{\phi}
 &
  A_{PL}(K) \ar@<0.75ex>[r]^-{p}
  &
   {C^*(K).} \ar@<0.75ex>[l]^-{i} }
$$
Using the uniqueness Theorem \ref{unicidadcompleta}, we establish relations   between the classical rational models of Sullivan and Quillen and the models constructed in the previous sections. In particular
  we prove
Theorem C as \thmref{thm:quillenetlasu}.

Section 8 contains  the correspondence between our realization and Sullivan's realization.  Theorem D corresponds to \thmref{lestrella}. Finally, Theorem E is proved in Section 9.

 The authors would like to thank Benoit Fresse for helpful conversations.
%%%%%%%%%%%%%%
{\small
\tableofcontents}

%%%%%%%%%%%%%%%%%

\section{The BCH product and the Lawrence-Sullivan model for the interval}\label{sec:prems}

Throughout this paper we assume that $\bq$ is the base field. A \emph{graded Lie algebra} consists of a $\mathbb Z$-graded vector space $L=\oplus_{p\in\bz}L_p$ together with a bilinear product called \emph{the Lie bracket} denoted by $[-,-]$ such that $[x,y] = -(-1)^{\vert x\vert \vert y\vert} [y,x]$ and
$$(-1)^{\vert x\vert \vert z\vert} \bigl[x,[y,z]\bigr] + (-1)^{\vert y\vert \vert x\vert} \bigl[y, [z,x]\bigr] + (-1)^{\vert z\vert  \vert y\vert} \bigl[z,[x,y]\bigr]  = 0.$$
Here $\vert x\vert$ denotes the degree of $x$.

A \emph{differential graded Lie algebra}  is a graded Lie algebra $L$ endowed with a linear derivation $\partial$  of degree $-1$ such that $\partial^2= 0$. It is called \emph{free} if $L$ is free as a Lie algebra, $L = \mathbb L(V)$ for some graded vector space $V$.

The \emph{completion} $\widehat{L}$ of a graded Lie algebra $L$ is
$$\widehat{L} = \varprojlim_n L/L^n$$
where $L^1 = L$, $L^n = [L, L^{n-1}]$  for $n\geq 2$, and the limit is taken on the topology arising from this filtration. A Lie algebra $L$ is called \emph{complete} if $L$ is isomorphic to its completion. From now on, and unless explicitly stated otherwise, by a DGL we mean a complete differential graded Lie algebra.

A Maurer-Cartan element is an element $a\in L_{-1}$ such that $\partial a+ \frac{1}{2}[a,a] = 0$. Denote by ${\MC}(L)$ the set of Maurer-Cartan elements.
 These are preserved by DGL morphisms. In particular, if $L = (\widehat{\mathbb L}(V),\partial)$ is a complete free DGL, and $\theta$ is a derivation satisfying $\theta (V) \subset \widehat{\mathbb L}^{\geq 2}(V)$ and $[\theta,\partial]= 0$, then $e^{\theta}=\sum_{n\ge 0}\frac{\theta^n}{n!}$ is an automorphism of $L$ and so, if $a \in {\MC}(L)$, then $e^{\theta}(a)$ is also a Maurer-Cartan element.

Given $(\libc(V),\partial)$ a complete free DGL and $v\in V$, we will often write $\partial v=\sum_{n\ge1}\partial_nv$ where $\partial_nv\in\mathbb{L}^{n}(V)$.

Let $(L,\partial)$ be a DGL and $a\in {\MC}(L)$. Then,  the derivation, $\partial_a=\partial+\ad_a$ is again a differential on $L$.

Given $L$ a complete DGL, the {\em gauge action} $\cG$ of $L_0$ on ${\MC}(L)$ determines an equivalence relation among Maurer-Cartan elements defined as follows (see for instance \cite[\S4]{mane}): given $x\in L_0$ and $a\in {\MC}(L)$,
$$
x\,\cG\, a=e^{\ad_x}(a)-\frac{e^{\ad_x}-1}{\ad_x}(\partial x).
$$
Here and from now on, $1$ inside an operator will denote the identity. Explicitly,
$$
x\,\cG\, a=\sum_{i\ge0}\frac{\ad_x^i(a)}{i!}-\sum_{i\ge0}\frac{\ad_x^{i}(\partial x)}{(i+1)!}.
$$
We denote the quotient set by $\widetilde\mc(L)=\mc(L)/\mathcal G$.
Geometrically \cite{kont,LS}, interpreting Maurer-Cartan elements as points in a space, one thinks of $x$ as a flow taking   $x\,\cG\, a$ to $a$ in unit time. In topological terms \cite{BM2}, the points $a$ and $x\,\cG\, a$ are in the same path component.

\medskip

 Let $L$ be a complete Lie algebra concentrated in degree $0$.
 We denote by $UL$ its enveloping algebra, by $I_L$ its augmentation ideal and by $\widehat{UL}$ and $\widehat{I_L}$
 the completions of $UL$ and $I_L$ with respect to the powers of $I_L$,
 $$\widehat{UL} = \varprojlim_n UL/I_L^n , \hspace{5mm} \widehat{I_L} = \varprojlim_n I_L/I_L^n.$$
 Denote finally by $G_L = \{ x\in \widehat{UL} \, \vert\, \Delta (x) = x\widehat \otimes x\}$
 the group of grouplike elements in $\widehat{UL}$.
 Moreover, the injection of $L$ in the set of primitive elements in $\widehat{UL}$ is an isomorphism
 and the functions exp and log give inverse bijections between $L$ and $G_L$.
 This induces a product on $L$, called the \emph{Baker-Campbell-Hausdorff product,}  BCH product henceforth, defined by
 $$a*b = \log (\exp (a) \cdot \exp (b)).$$
 Note that $a*(-a)=0$. Therefore, $-a$ is the inverse of $a$ for the BCH product and we also use the notation
 $-a=a^{-1}$.

 As the law in $G_{L}$ is associative, the BCH product is also associative.
 An explicit form of the product is given by the Baker-Campbell-Hausdorff formula
 $$a*b = a+b + \frac{1}{2} [a,b] + \frac{1}{12} \bigl[a,[a,b]\bigr] - \frac{1}{12} \bigl[b,[a,b]\bigr] + \cdots $$
 It follows from the Jacobi identity that in the Lie algebra of derivations of $L$ we have
 $\ad_{a*b} = \ad_{a} * \ad_{b}$. Hence $e^{\ad_{a*b}}=e^{\ad_{a}}\circ e^{\ad_{b}}$.

 Note that the BCH product is compatible with the gauge action on $\mc(L)$; i.e.,
 if $y\in L_{0}$ and $a \in\mc(L)$, we have
 $$(x*y)\cG a=x\cG(y\cG a).$$
We  also need the following property.

 \begin{proposition}\label{prop:expadx}
  Let $L$ be a complete DGL and let $x,y\in L_0$. Then,
$$
x*y*(-x) = e^{\ad_x}(y).
$$
 \end{proposition}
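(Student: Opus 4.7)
The plan is to lift the identity to the completed universal enveloping algebra $\widehat{UL_{0}}$ of the ungraded complete Lie algebra $L_{0}$, where the BCH product becomes ordinary multiplication in the group $G_{L_{0}}$ of grouplike elements, and then to compare both sides through the injection $\exp\colon L_{0}\hookrightarrow G_{L_{0}}$. By the very definition of the BCH product, $\exp(a\ast b)=\exp(a)\exp(b)$, so applying $\exp$ to the left hand side gives $\exp\bigl(x\ast y\ast(-x)\bigr)=\exp(x)\exp(y)\exp(-x)$.

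I would then compute the right hand side by observing that conjugation by $\exp(x)$ is an associative algebra automorphism of $\widehat{UL_{0}}$. Expanding the middle factor and pulling conjugation inside yields
$$\exp(x)\exp(y)\exp(-x)=\sum_{n\ge 0}\frac{\bigl(\exp(x)\,y\,\exp(-x)\bigr)^{n}}{n!}=\exp\bigl(\exp(x)\,y\,\exp(-x)\bigr).$$
The classical associative-algebra identity
$$e^{x}\,y\,e^{-x}=\sum_{n\ge 0}\tfrac{1}{n!}\,\mathrm{ad}_{x}^{\,n}(y),$$
valid for any $x,y$ in an associative algebra and easily proved by differentiating $t\mapsto e^{tx}\,y\,e^{-tx}$, then specialises to $\exp(x)\,y\,\exp(-x)=e^{\ad_{x}}(y)$ because the associative commutator of two primitive elements agrees with their Lie bracket.

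Combining the two computations produces $\exp\bigl(x\ast y\ast(-x)\bigr)=\exp\bigl(e^{\ad_{x}}(y)\bigr)$. The element $e^{\ad_{x}}(y)$ lies in $L_{0}$: since $[L_{0},L_{0}]\subset L_{0}$ the operator $\ad_{x}$ preserves $L_{0}$, and because $\ad_{x}$ raises the lower central filtration by one, the series $\sum_{n}\ad_{x}^{n}(y)/n!$ converges in the complete Lie algebra $L_{0}$. Injectivity of $\exp\colon L_{0}\to G_{L_{0}}$ finally delivers the desired equality $x\ast y\ast(-x)=e^{\ad_{x}}(y)$. The only delicate point, and hence the main obstacle, is the bookkeeping of convergence in the $I_{L_{0}}$-adic topology of $\widehat{UL_{0}}$; but this is automatic from completeness of $L$ and from the fact that $L_{0}$ embeds into the augmentation ideal, so all the formal manipulations with infinite sums above are legitimate.
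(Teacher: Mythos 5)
Your proposal is correct and follows essentially the same route as the paper: both pass to the completed enveloping algebra, establish $e^{x}ye^{-x}=e^{\ad_x}(y)$, use that conjugation (equivalently, that $\ad_x$ is a derivation) commutes with forming powers so that $e^{x}e^{y}e^{-x}=e^{e^{\ad_x}(y)}$, and conclude by applying $\log$ (your ``injectivity of $\exp$'' is the same step). The only cosmetic difference is the order in which the two exponential identities are assembled.
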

With the previous convention, the formula also reads
$$x*y*x^{-1}= e^{\ad_x}(y).$$

 \begin{proof}
 First, we note that, in  $\widehat{UL}$,
$$
 e^{\ad_x}(y) = e^xye^{-x}.
$$
Indeed,
 \begin{eqnarray*}
 e^{\ad_x}(y)
 &=&
 \sum_{n=0}^\infty \frac{\ad_x^n(y)}{n!} = \sum_{n=0}^\infty \frac{1}{n!} \sum_{i=0}^n (-1)^i
  \binom{n}{i} x^{n-i}yx^i\\
 &=&
  \sum_{i=0}^\infty\sum_{n=i}^\infty \frac{x^{n-i}}{(n-i)!}\, y \,\frac{(-x)^i}{i!} = e^xye^{-x}.
 \end{eqnarray*}
 Replacing $y$ by $e^y$, we deduce
 \begin{equation}\label{equa:expadx2}
 e^{\ad_x}(e^y)=e^xe^ye^{-x}.
  \end{equation}

  In a second step, we prove the equality
  \begin{equation}\label{equa:expadxn}
  (e^{\ad_x})(y^n) = \left( e^{\ad_x}(y)
 \right)^n.
     \end{equation}
 On the left hand side, the term of length $k$, $k\geq 0$, in $x$ equals
   $$
   \frac{\ad_x^k(y^n)}{k!}.
   $$
In the right hand side, this term is
 $$
  \sum_{k_1+ \cdots +k_n = k} \frac{\ad_x^{k_1}(y)}{k_1!} \cdots \frac{\ad_x^{k_n}(y)}{k_n!}.
$$
  As $\ad_{x}$ is a derivation, both terms coincide and  the equality (\ref{equa:expadxn}) is proved.
Therefore,
\begin{equation}\label{equa:expexpadx}
e^{\ad_x}(e^y) =   \sum_{n\geq 0} \frac{(e^{\ad_x})(y^n)}{n!} =  \sum_{n\geq 0} \frac{\left( e^{\ad_x}(y)\right)^n}{n!} =
 e^{e^{\ad_x}(y)}.
\end{equation}

Finally, the proposition follows from
$$
x*y*(-x)
=
 \log (e^xe^ye^{-x})
 =_{(\ref{equa:expadx2})}
 \log\left(e^{\ad_{x}}(e^y)\right)
 =_{(\ref{equa:expexpadx})}
 \log (e^{e^{\ad_x}(y)})
 =
  e^{\ad_x}(y).
  $$
 \end{proof}

The   construction of a model for the interval was first introduced in \cite{LS}.

 \begin{definition}\label{def:LSconstruction}
\emph{The Lawrence-Sullivan model for the interval}, LS-interval henceforth, is the complete DGL
$$(\lasu_{1}, \partial)=(\widehat{\mathbb L} (a,b,x),\partial),$$
in which $a$ and $b$ are Maurer-Cartan elements, $x$ is of degree~0 and
\begin{equation}\label{equa:dxLSinterval}
\partial x = \ad_xb + \sum_{n=0}^\infty \frac{B_n}{n!}\,  \ad_x^n (b-a) = \ad_xb + \frac{\ad_x}{e^{\ad_x}-1} (b-a),
\end{equation}
 where the $B_n$'s are the Bernoulli numbers.
\end{definition}
Using the identity
$$
\left( \frac{-x}{e^{-x}-1}\right) = x+ \left( \frac{x}{e^x-1}\right),
$$
we may also write
\begin{equation}\label{dxa}
\partial x = ad_xa + \frac{ad_{-x}}{e^{-ad_x}-1}(b-a).
\end{equation}

Let $(\libc(a_0,a_1, a_2,x_1,x_2),\partial)$ be two glued LS-models of the interval. That is, $a_0, a_1$ and $a_2$ are Maurer-Cartan elements, $\partial x_1 = \ad_{x_1}(a_1) + \frac{ \ad_{x_1}}{e^{\ad_{x_1}}-1} (a_1-a_0)$ and
  $\partial x_2 = \ad_{x_2}(a_2) + \frac{ \ad_{x_2}}{e^{\ad_{x_2}}-1} (a_2-a_1)$.
These two models give a model for the subdivision of an interval, as follows.

 \begin{theoremb} \label{prop:subdivision} {\rm \cite[Theorem 2]{LS}}
 The map
 $$\gamma\colon (\hL(a,b,x),\partial) \longrightarrow(\hL(a_0,a_1,a_2, x_1, x_2),\partial),$$
defined by $\gamma(a)= a_0$, $\gamma(b)= a_2$ and $\gamma(x) = x_1*x_2$, is a DGL morphism.
\end{theoremb}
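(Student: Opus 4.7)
The plan is to verify that $\gamma$ is a DGL morphism by checking compatibility with the differential on each of the three generators $a$, $b$, $x$ of $\lasu_1$, since $\gamma$ is already (continuously) extended as a Lie algebra morphism. The cases of $a$ and $b$ are immediate: because $a_0$ and $a_2$ are Maurer-Cartan, $\gamma(\partial a) = \gamma\bigl(-\tfrac12[a,a]\bigr) = -\tfrac12[a_0,a_0] = \partial a_0 = \partial\gamma(a)$, and similarly for $b$. All the substance of the proof lies in the equality
\[
\partial(x_1 * x_2) \;=\; \ad_{x_1*x_2}(a_2) + \frac{\ad_{x_1*x_2}}{e^{\ad_{x_1*x_2}}-1}(a_2-a_0),
\]
which is $\gamma(\partial x)$.

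My strategy is to reinterpret the Lawrence-Sullivan equation $(\ref{equa:dxLSinterval})$ as a gauge-action identity. Specifically, a short calculation in which one substitutes the formula for $\partial x$ into the definition of the gauge action shows that, for $y\in L_0$,
\[
\partial y = \ad_y b + \frac{\ad_y}{e^{\ad_y}-1}(b-a) \quad\Longleftrightarrow\quad y\,\cG\, b = a.
\]
Applied to the hypothesis on $x_1$ and $x_2$, this equivalence yields $x_1\,\cG\, a_1 = a_0$ and $x_2\,\cG\, a_2 = a_1$.

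Next I invoke the compatibility of the BCH product with the gauge action on $\MC(L)$ stated above, $(u*v)\,\cG\, w = u\,\cG\,(v\,\cG\, w)$, with $u=x_1$, $v=x_2$, $w=a_2$. This gives
\[
(x_1 * x_2)\,\cG\, a_2 \;=\; x_1\,\cG\,(x_2\,\cG\, a_2) \;=\; x_1\,\cG\, a_1 \;=\; a_0.
\]
Reading the equivalence backwards with $y = x_1*x_2$, $a = a_0$, $b = a_2$ forces $\partial(x_1 * x_2)$ to equal the right-hand side of the LS relation with endpoints $a_0$ and $a_2$, i.e.\ precisely $\gamma(\partial x)$.

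The main hurdle is really bookkeeping: fixing sign conventions so that the LS formula translates into $y\,\cG\, b = a$ (not $y\,\cG\, a = b$) and then using the compatibility of BCH with the gauge action, which is the genuinely nontrivial ingredient (and the reason this argument works cleanly rather than requiring a brute-force ``derivative of BCH'' manipulation using $e^{\ad_{x_1*x_2}} = e^{\ad_{x_1}}\circ e^{\ad_{x_2}}$ from Proposition~\ref{prop:expadx}). Once the translation is set up, the argument is a single application of associativity of the gauge action followed by the two hypotheses on $x_1$ and $x_2$.
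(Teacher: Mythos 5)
Your argument is correct. Note that the paper does not actually prove this statement --- it is imported verbatim as \cite[Theorem 2]{LS} --- so your proof is necessarily an independent one, and it differs from the computational route of the original reference, where the identity $\partial(x_1*x_2)=\gamma(\partial x)$ is obtained by direct manipulation of the BCH series. Your key move is to observe that the Lawrence--Sullivan equation (\ref{equa:dxLSinterval}) for an edge $y$ with endpoints $a,b$ is \emph{equivalent} to the gauge relation $y\,\cG\,b=a$; this equivalence is genuinely two-way because $\frac{e^{\ad_y}-1}{\ad_y}=1+\frac{1}{2}\ad_y+\cdots$ is an invertible operator on a complete Lie algebra with inverse $\frac{\ad_y}{e^{\ad_y}-1}$, and your sign conventions check out against the definition of $\cG$ given in Section 1. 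The theorem then collapses to $(x_1*x_2)\,\cG\,a_2=x_1\,\cG\,(x_2\,\cG\,a_2)$. The only caveat is that this compatibility of the BCH product with the gauge action is merely \emph{asserted} in Section 1, not proved there, so your argument is complete only modulo that standard fact; it is, however, independent of the present theorem (for instance one can check it by writing $x\,\cG\,a=e^{\ad_x}(d+a)-d$ in the extension $L\oplus\bq d$ with $d$ acting as the differential, and using $e^{\ad_{u*v}}=e^{\ad_u}\circ e^{\ad_v}$, which the paper does establish), so there is no circularity. What your approach buys is a short conceptual proof that makes the geometric content (concatenation of flows) transparent; what it costs is the reliance on that unproved group-action statement, which a fully self-contained write-up should either prove or reference precisely (e.g.\ to \cite[\S4]{mane}).
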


Finally, we  will make use of the following results (for the first cf. also \cite{pepdt}).

\begin{theoremb}\label{unicols} Let $(\libc(a,b,x),\partial')$ be a complete Lie algebra
in which $a,b$ are Maurer-Cartan elements and the linear part of
the differential satisfies $\partial'_1x=b-a$. Then  $(\libc(a,b,x),\partial')=(\lasu_{1}, \partial)$.
\end{theoremb}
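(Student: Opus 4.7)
The plan is to show that any such $\partial'$ must coincide with the Lawrence-Sullivan differential $\partial$ on the single generator $x$, by an induction on bracket length. Since $a,b$ are Maurer-Cartan for both differentials, $\partial'a = -\tfrac12[a,a] = \partial a$ and similarly $\partial'b = \partial b$, so the two differentials can only differ on $x$.

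Set $D=\partial'-\partial$, a continuous derivation of $\widehat\bL(a,b,x)$ of degree $-1$ satisfying $D(a)=D(b)=0$, and write $u:=D(x) = \partial'x-\partial x\in \widehat\bL(a,b,x)_{-1}$. The hypothesis $\partial'_1x=b-a=\partial_1x$ says that the linear (bracket-length $1$) part of $u$ vanishes, so $u\in\widehat\bL^{\ge 2}(a,b,x)_{-1}$. A degree count shows that $\widehat\bL(a,b,x)_{-1}$ is spanned (topologically) by $\{\ad_x^n(a),\ad_x^n(b)\}_{n\ge 0}$, so I may write
$$
u = \sum_{k\ge 2} u_k,\qquad u_k = \alpha_k\,\ad_x^{k-1}(a)+\beta_k\,\ad_x^{k-1}(b),
$$
where $u_k$ is the homogeneous component of bracket length $k$.

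From $\partial^2=(\partial')^2=0$ and $D$ odd one deduces $[\partial,D]+D^2=0$, which evaluated on $x$ gives the relation
$$
\partial u + D(\partial x) + D(u) = 0.
$$
I would now filter this identity by bracket length. Because $D(a)=D(b)=0$ and $D(x)=u$ lies in filtration $\ge 2$, both $D(\partial x)$ and $D(u)$ lie in filtration $\ge 3$ once one substitutes $u=\sum_{j\ge k}u_j$ and uses the inductive hypothesis $u_2=\cdots=u_{k-1}=0$. Consequently the length-$k$ component of the identity reduces to
$$
(\partial u_k)_k = 0,
$$
i.e.\ the length-$k$ part of $\partial u_k$, which is obtained by applying to $u_k$ the derivation $\theta$ of $\bL(a,b,x)$ given by $\theta(x)=b-a$, $\theta(a)=\theta(b)=0$ (the linear part of $\partial$).

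The key (and main) step is then to check that $\theta$ is injective on the two-dimensional space $\langle\ad_x^{k-1}(a),\ad_x^{k-1}(b)\rangle$. Expanding
$$
\theta(\ad_x^{k-1}(a))=\sum_{i=0}^{k-2}\ad_x^{i}\ad_{b-a}\ad_x^{k-2-i}(a),
$$
and similarly for $\ad_x^{k-1}(b)$, one sees that the bracket $\ad_a\ad_x^{k-2}(a)$ appears only in $\theta(\ad_x^{k-1}(a))$ and the bracket $\ad_a\ad_x^{k-2}(b)$ appears only in $\theta(\ad_x^{k-1}(b))$; linear independence of these basis elements of the free Lie algebra (e.g.\ via a Hall basis) forces $\alpha_k=\beta_k=0$. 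Hence $u_k=0$, which completes the induction and shows $u=0$, i.e.\ $\partial'=\partial$. The main obstacle is the last injectivity check; the rest of the argument is a standard filtered deformation calculation once the form of $\widehat\bL(a,b,x)_{-1}$ is identified.
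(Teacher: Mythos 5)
Your proposal is correct in outline and shares the paper's inductive skeleton: both arguments induct on bracket length and use $\partial^2=\partial'^2=0$ to reduce the length-$k$ obstruction to a statement about the linear part $\partial_1$. Where you diverge is in how that obstruction is killed. The paper observes that $u_k=\partial_kx-\partial'_kx$ is a \emph{decomposable} $\partial_1$-cycle of degree $-1$; since $H_*(\libc(a,b,x),\partial_1)=\lib(a)$ is concentrated in word length $1$ and $\libc(a,b,x)_0=\bq x$ contains no decomposables, $u_k$ must vanish. You instead identify $\libc(a,b,x)_{-1}$ explicitly as the closed span of $\{\ad_x^n(a),\ad_x^n(b)\}_{n\ge 0}$ (which is correct) and check by hand that $\partial_1$ is injective on each two-dimensional length-homogeneous piece. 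Your route is more computational but avoids invoking $H(\lib(V),\partial_1)\cong\lib(H(V,\partial_1))$; the paper's is shorter and reuses the same homological device in the uniqueness theorem for higher simplices.

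One sub-claim in your injectivity check is false, though harmlessly so. All of $a$, $b$, $\ad_x^n(a)$, $\ad_x^n(b)$ have odd degree, so the bracket is \emph{symmetric} on them: $[\ad_x^{k-2}(b),a]=[a,\ad_x^{k-2}(b)]$. Hence $\ad_a\ad_x^{k-2}(b)$ \emph{does} occur in $\theta(\ad_x^{k-1}(a))$, via the $i=k-2$ term $\ad_x^{k-2}\ad_{b-a}(a)$, which contributes $[\ad_x^{k-2}(b),a]$; already for $k=2$ one has $\theta([x,a])=[b,a]-[a,a]$, which contains $[a,b]$. The repair is immediate: $\ad_a\ad_x^{k-2}(a)$ occurs only in $\theta(\ad_x^{k-1}(a))$ and $\ad_b\ad_x^{k-2}(b)$ only in $\theta(\ad_x^{k-1}(b))$ (the other image contains no $[A,A]$-type, resp.\ no $[B,B]$-type, brackets), each with nonzero coefficient, which forces $\alpha_k=\beta_k=0$ as you intended.
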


\begin{proof} Let $\partial= \sum_{n\geq 1}\partial_n$,  with  $\im \partial_n \subset \lib^{n}(a,b,x)$, be the differential in $\lasu_{1}$. We show that $\partial_nx=\partial'_nx$ for any $n\ge1$. For $n=1$ this is trivially true.
For  $n=2$, by using $\partial_{1}\partial'_{2}+\partial'_{2}\partial_{1}=0$, a short computation shows that
$$
\partial_2x=\partial_2'x=\frac{1}{2}[x,a+b].
$$
Assume $\partial_mx=\partial_m'x$, for $1\le m<n$. From $\partial^2=\partial'^2=0$ and the induction hypothesis we deduce that $\partial_nx-\partial'_nx$ is a decomposable $\partial_1$-cycle of degree $-1$.
Since $H_*(\libc(a,b,x),\partial_1)=\mathbb L(a)$ and $\libc(a,b,x)_0=\bq x$, we conclude that $\partial_nx-\partial'_nx=0.$
\end{proof}

\begin{proposition}\label{prop:dadb}
Let $L$ be a complete DGL which contains an LS-interval $(\widehat{\mathbb L} (a,b,x),\partial)$. Then, for any $v\in L$,
$$
\partial_a\,e^{\ad_x}(v)=e^{\ad_x}(\partial_bv).$$
In other words, the map
$$e^{\ad_x} {\colon} (L, \partial_b) \to (L, \partial_a)$$ is an isomorphism of DGL's.
\end{proposition}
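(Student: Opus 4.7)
The plan is to reformulate the desired identity as a conjugation statement and reduce it to a clean operator computation. Since $e^{\ad_x}$ is an automorphism of the Lie algebra $L$ (because $\ad_x$ is a derivation of degree $0$), asserting that $e^{\ad_x}\colon (L,\partial_b)\to(L,\partial_a)$ is a DGL morphism is equivalent to the operator identity
$$
e^{-\ad_x}\circ\partial_a\circ e^{\ad_x}=\partial_b
$$
on $L$. Writing $\partial_a=\partial+\ad_a$ and $\partial_b=\partial+\ad_b$, this splits into two pieces: the conjugate of $\ad_a$ and the conjugate of $\partial$.

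The first piece is immediate: since $e^{-\ad_x}$ is a Lie algebra morphism, $e^{-\ad_x}\circ\ad_a\circ e^{\ad_x}=\ad_{e^{-\ad_x}(a)}$. The substantive step is computing $e^{-\ad_x}\circ\partial\circ e^{\ad_x}$. I would set $\phi(t)=e^{-t\ad_x}\,\partial\, e^{t\ad_x}$, differentiate with respect to $t$, and use $[\ad_x,\partial]=-\ad_{\partial x}$ (a consequence of $\partial$ being a derivation) together with the ad-invariance $[\ad_x,\ad_y]=\ad_{[x,y]}$, to solve the resulting ODE on operators. This yields
$$
e^{-\ad_x}\circ\partial\circ e^{\ad_x}=\partial+\ad_{\Phi(\ad_x)(\partial x)},\qquad \Phi(z)=\frac{1-e^{-z}}{z}.
$$

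Next, I would feed into $\Phi(\ad_x)$ the explicit Lawrence--Sullivan expression $\partial x=\ad_x b+\frac{\ad_x}{e^{\ad_x}-1}(b-a)$ from \eqref{equa:dxLSinterval}. The first summand gives $(1-e^{-\ad_x})(b)$, and the second simplifies by the algebraic identity $\frac{1-e^{-z}}{z}\cdot\frac{z}{e^z-1}=e^{-z}$, giving $e^{-\ad_x}(b-a)$. Adding them collapses to $b-e^{-\ad_x}(a)$, so
$$
e^{-\ad_x}\circ\partial\circ e^{\ad_x}=\partial+\ad_{b-e^{-\ad_x}(a)}.
$$

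Combining both pieces, the $\ad_{e^{-\ad_x}(a)}$ contributions cancel and one is left with $\partial+\ad_b=\partial_b$, which is exactly the required identity. The main obstacle is the operator-level step: one must handle the formal series for $e^{\pm\ad_x}$ carefully in the complete Lie algebra, and the sign bookkeeping in the graded commutator $[\ad_x,\partial]=-\ad_{\partial x}$. Once that conjugation formula is secured, the rest is a short manipulation of the Bernoulli-number generating function hidden in the Lawrence--Sullivan differential.
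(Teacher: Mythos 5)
Your proof is correct, and the final cancellation it produces is the same one the paper exploits, but you reach it by a genuinely more self-contained route. The paper's argument is a three-line computation that outsources the commutation of the twisted differential past $e^{-\ad_x}$ to Lemma~1 of Lawrence--Sullivan, which supplies
$\partial_b\,e^{-\ad_x}(v)=e^{-\ad_x}\bigl(\partial_b v\bigr)+(-1)^{|v|}e^{-\ad_x}\ad_v\frac{e^{\ad_x}-1}{\ad_x}(\partial_b x)$;
since $\partial_b x=\frac{\ad_x}{e^{\ad_x}-1}(b-a)$, the correction term collapses to $\ad_{a-b}(v)$ and one reads off $\partial_b\,e^{-\ad_x}=e^{-\ad_x}\partial_a$ immediately. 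You instead split $\partial_a=\partial+\ad_a$, handle $\ad_a$ by the automorphism property, and derive the conjugation formula $e^{-\ad_x}\circ\partial\circ e^{\ad_x}=\partial+\ad_{\Phi(\ad_x)(\partial x)}$ with $\Phi(z)=(1-e^{-z})/z$ from scratch by differentiating $e^{-t\ad_x}\,\partial\, e^{t\ad_x}$ in $t$; this amounts to an independent proof of the content of the LS lemma, and your identity $\Phi(z)\cdot\frac{z}{e^z-1}=e^{-z}$ plays exactly the role of the paper's one-line evaluation $\frac{e^{\ad_x}-1}{\ad_x}(\partial_b x)=b-a$. The points you flag as delicate are indeed fine: all series in $\ad_x$ converge because $\ad_x^n(v)\in L^{n+1}$ and $L$ is complete, and $[\partial,\ad_x]=\ad_{\partial x}$ carries no sign since $|x|=0$. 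What your version buys is independence from the external reference and a transparent account of why the Bernoulli generating function in the Lawrence--Sullivan differential is exactly what makes $e^{\ad_x}$ intertwine $\partial_b$ with $\partial_a$; the cost is a longer argument than the paper's citation-plus-computation.
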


\begin{proof}
In fact,
\begin{eqnarray*}
 \partial_{b}\,e^{-\ad_x} (v)
 &=&
  e^{-\ad_x}(\partial_{b}v) + (-1)^{\vert v\vert} e^{-\ad_x}\ad_v \frac{e^{\ad_x} - 1}{\ad_x} (\partial_{b}x)\\
  &=&
e^{-\ad_{x}}\left(\partial_{b}v+(-1)^{|v|}\ad_{v}(b-a)\right)\\
&=&
  e^{-\ad_{x}}(\partial_{a}v)
,
\end{eqnarray*}
where the first equality comes from \cite[Lemma 1]{LS} and the second is a direct computation using
(\ref{equa:dxLSinterval}).
\end{proof}

%%%%%%%%%%%%%%%%%%%%%%%%%%%%%%%%
\section{Sequences of compatible   models of $\mathbf\Delta$}\label{sec:delta}

For each $n\ge0$,  we consider the standard $n$-simplex $ \Delta^n$,
$$\Delta^n_p=\{(i_0,\dots,i_p)\mid 0\le i_0<\dots<i_p\leq n\},\quad\text{if} \quad p\leq n,$$
and $\Delta^n_p=\emptyset$ if $p>n$.
Let  $(\libc(s^{-1}\Delta^n),d)$ be the complete free DGL on the desuspended rational simplicial chain complex on  $\Delta^n$,
\begin{equation}\label{simpdif}
d a_{i_0\dots i_p} = \sum_{j=0}^p (-1)^j a_{i_0\dots \widehat{i}_j\dots i_p}.
\end{equation}
 Here, $a_{i_0\dots i_p}$ denotes the generator of degree $p-1$  represented by the $p$-simplex
 $(i_0,\dots,i_p)\in \Delta^n_{p}$.
    Henceforth, unless explicitly needed, we drop the desuspension sign to avoid unnecessary notation and write simply  $(\libc(\Delta^n),d)$.

For each $0\le i\le n$ consider the $i$-th coface affine map $
\delta_j\colon\Delta^{n-1}\to \Delta^{n},
$ defined on the vertices by,
$$
\delta_i(j)=\begin{cases}\,\,j,\,\,&\text{if $j<i$,}\\ j+1,\,\,&\text{if $j\ge i$}.
\end{cases}
$$
 We use the same notation for the induced DGL morphism,
\begin{equation}\label{coface}
\delta_i\colon (\libc(\Delta^{n-1}),d)\longrightarrow (\libc(\Delta^n),d),
\end{equation}
defined by
$$\delta_i(a_{j_0\dots j_p}) = a_{\ell_0\dots \ell_p}\quad\text{with}\quad \ell_k=\begin{cases} \,\, j_k,\,&\text{if $j_k<i$},\\ j_k+1,\,&\text{if $j_k\ge i$}.
\end{cases}
$$
Finally, we  denote by $\dot\Delta^n$   and  $\Lambda^n_i$ the boundary of $\Delta^n$ and the $i$-horn obtaining by removing the $i$-th coface from $\dot\Delta^n$.

\begin{definition}\label{modelo} A  {\em sequence of  compatible models of $\mathbf\Delta$} is  a family
$\{ (\lasu_{n}, \partial)=(\libc(\sdelta^{n}),\partial)\}_{n\ge 0}$
of DGL's satisfying the following properties:
 \begin{itemize}
 \item[(1)] For each $i=0,\dots,n$, the generator $a_i\in \sdelta^n_0$ is a Maurer-Cartan element, $\partial a_i=-\frac{1}{2}[a_i,a_i]$.
\item[(2)] The linear part $\partial_1$ of $\partial$ is precisely  $d$ as in (\ref{simpdif}).
\item[(3)] For each $i=0,\dots,n$, the coface maps, $\delta_i\colon (\libc(\sdelta^{n-1}),\partial)\to (\libc(\sdelta^{n}),\partial)$,
 are DGL morphisms.
    \end{itemize}
Each element $(\libc(\sdelta^{n}),\partial)$ of this sequence is called \emph{a model of $\Delta^n$},
which is thus implicitly endowed with models of $\Delta^q$, $q<n$, satisfying the compatibility condition (3) above.
\end{definition}

\begin{definition} A   sequence of models $\{(\libc(\sdelta^n),\partial)\}_{n\ge0}$   is called {\em inductive} if, for $n\ge 2$, we have
\begin{equation}
    \partial_{a_0} a_{0{\dots}n}\in\libc(\sfdelta^n).
\end{equation}
\end{definition}

\begin{theoremb}\label{existencia} There exists  sequences of compatible inductive models of $\mathbf\Delta$.
\end{theoremb}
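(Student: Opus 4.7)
The plan is to proceed by induction on $n$. The base cases are immediate: for $n=0$ I take $\lasu_0 = (\libc(a_0), -\tfrac12[a_0,a_0])$, and for $n=1$ I take the Lawrence--Sullivan interval of Definition~\ref{def:LSconstruction}. Both satisfy conditions (1)--(3), and the inductive condition is vacuous in this range.

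Fix $n \ge 2$ and assume compatible inductive models $\lasu_0, \dots, \lasu_{n-1}$ have been built. First I would assemble the DGL structure on the subalgebra $\libc(\sfdelta^n) \subset \libc(\sdelta^n)$: each facet $\delta_i(\sdelta^{n-1})$ inherits a differential from $\lasu_{n-1}$ by forcing (3), and on the intersection of two facets (the image of some $\sdelta^{n-2}$) the two induced structures agree because both restrict, via the inductive hypothesis, to the unique differential of $\lasu_{n-2}$. Hence $\partial$ extends coherently to a derivation on $\libc(\sfdelta^n)$ that satisfies (1)--(3).

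It remains to define $\partial$ on the top generator $w := a_{0\dots n}$ of degree $n-1$. I would use the ansatz
$$
\partial w = X - [a_0, w], \qquad X \in \libc(\sfdelta^n),
$$
which automatically enforces the inductive condition $\partial_{a_0} w = X \in \libc(\sfdelta^n)$. Since $[a_0,w]$ is quadratic, condition~(2) becomes $X_1 = dw$. Using $\partial a_0 = -\tfrac12[a_0,a_0]$ together with the identity $\ad_{a_0}^2 = \tfrac12 \ad_{[a_0,a_0]}$ (valid because $a_0$ has odd degree), a short computation reduces $\partial^2 w = 0$ to the single condition $\partial_{a_0} X = 0$; note that $\partial_{a_0}$ is a differential on $\libc(\sfdelta^n)$ because $a_0$ is Maurer--Cartan.

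The problem thus reduces to exhibiting $X \in \libc(\sfdelta^n)_{n-2}$ with $X_1 = dw$ and $\partial_{a_0} X = 0$. I would build $X = X_1 + X_2 + \cdots$ inductively by bracket length, starting from $X_1 := dw$ (which is a $d$-cycle since $d^2 = 0$). If $X_1 + \cdots + X_{k-1}$ has been constructed so that $\partial_{a_0}(X_1+\cdots+X_{k-1}) \in \libc^{\ge k}(\sfdelta^n)$, then its bracket-length-$k$ component $R_k$ is a $d$-cycle (by separating bracket lengths in $(\partial_{a_0})^2 = 0$), and one needs $X_k$ with $dX_k = -R_k$. The main obstacle is the $d$-exactness of $R_k$, since $H_*(\libc(\sfdelta^n),d) \cong \libc(H_*(\sfdelta^n))$ is nontrivial in the relevant degree $n-3$. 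I would circumvent this by exhibiting $X$ more structurally using Proposition~\ref{prop:dadb}: the inductive hypothesis applied to the face $\delta_0(\lasu_{n-1})$ makes $\partial_{a_1} a_{1\dots n}$ a $\partial_{a_1}$-cycle in $\libc(\sfdelta^n)$ (since $\partial_{a_1}^2 = 0$), and transporting it by the isomorphism $e^{\ad_{a_{01}}}\colon(\libc(\sfdelta^n),\partial_{a_1})\to(\libc(\sfdelta^n),\partial_{a_0})$ produces a $\partial_{a_0}$-cycle inside $\libc(\sfdelta^n)$. Combining such transports along the various edges $a_{0j}$, guided by the BCH subdivision identity of Theorem~\ref{prop:subdivision}, should yield an explicit $X$ with linear part $dw$ that is $\partial_{a_0}$-closed by construction; the remaining combinatorial check is that the linear parts of these transports assemble into exactly $dw$.
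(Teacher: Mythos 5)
Your setup is right and matches the paper's: induction on $n$, the differential on $\libc(\sfdelta^n)$ assembled from the faces via condition (3), and the reduction of the problem to producing a single element $X\in\libc(\sfdelta^n)$ of degree $n-2$ with $\partial_{a_0}X=0$ and linear part $dw$ (your identity $\partial_{a_0}^2=\partial^2$ for the Maurer--Cartan element $a_0$ is correct, so $\partial^2w=0$ is indeed equivalent to $\partial_{a_0}X=0$). The gap is that you never actually produce $X$. You correctly observe that the naive length-by-length obstruction argument fails because $H(\libc(\sfdelta^n),\partial_1)$ is nonzero in the relevant degree, but the structural replacement you sketch does not close the argument: the element $e^{\ad_{a_{01}}}(\partial_{a_1}a_{1\dots n})$ is a $\partial_{a_0}$-cycle of degree $n-3$, not $n-2$, so no combination of such transports can have linear part $dw$, which is a sum of $(n-1)$-simplices sitting in degree $n-2$. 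What the transport actually gives you is a degree-$(n-2)$ element, $e^{\ad_{a_{01}}}a_{1\dots n}$, whose $\partial_{a_0}$-image lies in $\libc(\sfdelta^n)$; to get a cycle you must subtract a second degree-$(n-2)$ element with the same $\partial_{a_0}$-image, and producing that second element is exactly the step you have deferred to an unspecified ``combinatorial check.''

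The paper closes this gap differently, and more cheaply, using the horn rather than the whole boundary. By the inductive condition applied to the face $a_{0\dots n-1}$, the element $\partial_{a_0}a_{0\dots n-1}$ lies in $\libc(\Lambda^n_n)$, and $(\libc(\Lambda^n_n),\partial_{a_0})$ is acyclic (Corollary \ref{noseque}(i), a consequence of Proposition \ref{auxi}); hence there is $\Gamma\in\libc(\Lambda^n_n)$ of degree $n-2$ with $\partial_{a_0}\Gamma=\partial_{a_0}a_{0\dots n-1}$, and one sets $X=(-1)^n\left(a_{0\dots n-1}-\Gamma\right)$. This is a $\partial_{a_0}$-cycle by construction, and its linear part is forced to equal $dw$: the discrepancy is a linear $\partial_1$-cycle of degree $n-2$ in $\libc(\Lambda^n_n)$, hence the $\partial_1$-boundary of a linear element of degree $n-1$, and the horn has no generators in that degree. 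Your transport-plus-BCH picture is essentially how the paper builds the explicit low-dimensional models (the triangle in Proposition \ref{prop:eltriangulo} and the tetrahedron in Example \ref{ejemplo}), but for the general existence statement you should replace it by this horn-acyclicity argument, or else supply the missing construction in full.
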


Its proof uses the following key proposition: Let $(\widehat{\mathbb L}(V\oplus W), \partial)$ be a complete free DGL in which $V$ and $W$ are of finite dimension. We denote by $\cI$ the ideal generated by $W$, and by $\partial_1$ the linear part of the differential.

\begin{proposition}\label{auxi} With the above notations, if $\partial(\cI)\subset \cI$ and $H(W, \partial_1)= 0$, then
  the projection $(\widehat{\mathbb L}(V\oplus W), \partial) \to (\widehat{\mathbb L}(V), \partial)$ is a quasi-isomorphism.

 Moreover, every cycle in $\cI\cap \widehat{\mathbb L}^{\geq n}(V\oplus W)$ is a boundary $\alpha = \partial \beta$ with $\beta \in \cI\cap \widehat{\mathbb L}^{\geq n}(V\oplus W)$.
\end{proposition}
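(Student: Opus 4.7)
The plan: Since $\partial(\cI)\subset\cI$, there is a short exact sequence of complexes
$$0 \longrightarrow (\cI,\partial) \longrightarrow (\widehat{\mathbb L}(V\oplus W),\partial) \longrightarrow (\widehat{\mathbb L}(V),\overline\partial) \longrightarrow 0,$$
where $\overline\partial$ is the induced quotient differential. Hence the projection is a quasi-isomorphism exactly when $H(\cI,\partial)=0$, and the ``moreover'' assertion is precisely the refinement that every $\partial$-cycle in $\cI\cap\widehat{\mathbb L}^{\geq n}(V\oplus W)$ bounds in the same subspace. Both statements will follow from a single induction on bracket length, provided the graded pieces $(\cI_k,\partial_1)$ are acyclic for every $k\geq 1$, where $\cI_k := \cI\cap\mathbb L^k(V\oplus W)$.

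To prove this acyclicity, I first observe that applying $\partial(\cI)\subset\cI$ to a generator $w\in W$ forces $\partial_1(W)\subset W$, so $(W,\partial_1)$ is a finite-dimensional acyclic subcomplex and admits a contracting homotopy $h_W:W\to W$ of degree $+1$ with $\partial_1 h_W + h_W\partial_1 = \id_W$. Extend $h_W$ to a derivation $h$ on $\widehat{\mathbb L}(V\oplus W)$ by setting $h|_V=0$, and consider $\sigma := \partial_1 h + h\partial_1$. Then $\sigma$ is again a derivation with $\sigma|_W = \id_W$ and $\sigma(v) = h_W(\partial_1^{V\to W}v)\in W$ for $v\in V$, where $\partial_1^{V\to W}$ is the $W$-component of $\partial_1|_V$. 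Now I exploit the bigrading of $\mathbb L(V\oplus W)$ by the numbers $(p,q)$ of $V$- and $W$-letters in each bracket, well defined because the Jacobi relation is multi-homogeneous. On a bracket in $\mathbb L^k(V\oplus W)^{(p,q)}\subset\cI_k$ (so $q\geq 1$) the derivation $\sigma$ contributes the identity from each of the $q$ $W$-letters and a term of bidegree $(p-1,q+1)$ from each $V$-letter, so
$$
\sigma = q\cdot\id + N_{p,q}\quad\text{on}\quad \mathbb L^k(V\oplus W)^{(p,q)},
$$
with $N_{p,q}$ strictly raising the $W$-weight and hence nilpotent (it vanishes after at most $p+1$ iterations). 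Therefore $\sigma$ is invertible on $\cI$ via a finite geometric series on each bigraded piece. A direct check using $\partial_1^2=0$ gives $[\partial_1,\sigma]=0$, so $\sigma^{-1}$ commutes with $\partial_1$, and $s:=h\circ\sigma^{-1}$ satisfies $\partial_1 s + s\partial_1 = \sigma\sigma^{-1} = \id_\cI$. Since $s$ preserves bracket length, this yields the desired acyclicity of $(\cI_k,\partial_1)$ for every $k$.

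For the bootstrap to $\partial$, given a $\partial$-cycle $\alpha\in\cI\cap\widehat{\mathbb L}^{\geq n}$, decompose $\alpha=\sum_{k\geq n}\alpha_k$ with $\alpha_k\in\cI_k$ and $\partial=\sum_{i\geq 1}\partial_i$ by bracket length, and build $\beta=\sum_{k\geq n}\beta_k\in\cI\cap\widehat{\mathbb L}^{\geq n}$ step by step by choosing $\beta_k := s(\delta_k)\in\cI_k$, where $\delta_k := \alpha_k - \sum_{i\geq 2}\partial_i\beta_{k-i+1}$. The element $\delta_k$ lies in $\cI_k$ and is a $\partial_1$-cycle by $\partial^2=0$ together with the inductive hypothesis, so $\partial_1\beta_k = \delta_k$ by the acyclicity step; the sum converges in the complete topology and satisfies $\partial\beta=\alpha$. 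The main obstacle I foresee is the bigraded bookkeeping for $\sigma$: one must verify carefully that its diagonal part on each $(p,q)$-piece of $\cI$ is the scalar $q$, so that the inverse is well defined on the completion and the homotopy $s$ descends to each bracket-length piece in the form needed for the induction.
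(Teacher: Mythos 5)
Your proof is correct, and its overall architecture is the same as the paper's: reduce everything to the $\partial_1$-acyclicity of the bracket-length-homogeneous pieces of the ideal $\cI$, then remove the cycle length by length and sum the correcting terms in the complete topology (your $\delta_k$/$\beta_k$ recursion is exactly the paper's iteration producing the $\gamma_j$). Where you genuinely diverge is in how that acyclicity is established. The paper identifies $\cI$ with the kernel $K$ of $(\mathbb L(V\oplus W),\partial_1)\to(\mathbb L(V),\partial_1)$ and simply asserts that $H(W,\partial_1)=0$ forces $K$ to be acyclic, implicitly invoking the structural fact that a linear differential on a free Lie algebra satisfies $H(\mathbb L(U),d_1)\cong\mathbb L(H(U,d_1))$. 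You instead build an explicit contracting homotopy: extend a contracting homotopy of $(W,\partial_1)$ to a derivation $h$ killing $V$, observe that $\sigma=[\partial_1,h]$ is a derivation whose diagonal part on the $(p,q)$-bihomogeneous piece is multiplication by the $W$-weight $q$, invert $\sigma$ on the $q\geq 1$ part by a finite geometric series, and set $s=h\sigma^{-1}$. The bookkeeping you flag as the main obstacle does check out: the multigrading by letters is well defined on a free Lie algebra, $\partial(\cI)\subset\cI$ does force $\partial_1(W)\subset W$, $[\partial_1,\sigma]=0$ follows from the graded Jacobi identity and $\partial_1^2=0$, and $s$ preserves bracket length and $\cI$, which is what the induction needs. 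Your route is longer but more self-contained and constructive — it hands you an explicit length-preserving homotopy on $\cI$, which is slightly stronger than the bare acyclicity the paper uses — while the paper's appeal to the free-Lie-algebra Künneth identity is shorter but leaves that step to the reader.
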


\begin{proof} Let $K$ be the kernel of the projection $(\mathbb L(V\oplus W), \partial_1) \to (\mathbb L(V), \partial_1)$. Since $H(W,\partial_1)= 0$, $K$ is acyclic. Now let $\alpha\in \cI$ be a  $\partial$-cycle. There exists $n$ such that
$\alpha=\alpha_{n}+\beta_{n+1}$,
with $\alpha_{n}\in \cI\cap \libc^n(W\oplus V)$ and $\beta_{n+1}\in \cI\cap \libc^{>n}(W\oplus V)$.
From $\partial \alpha=0$, we deduce $\partial_{1}\alpha_{n}=0$ and since $\alpha_n \in K\cap \mathbb L^n(W\oplus V)$  and $H(K, \partial_1)= 0$, there is an element $\gamma_{n}\in K\cap \mathbb L^n(W\oplus V) = \cI\cap \libc^n(W\oplus V)$ such that $\alpha_{n}=\partial_{1}\gamma_{n}$.
Thus, we have $\alpha-\partial \gamma_{n}\in \cI\cap \libc^{>n}(W\oplus V)$.
By iterating this process, we obtain a sequence $(\gamma_{j})_{j\geq n}$, such that
$\gamma_{j}\in \cI\cap \libc^j(W\oplus V)$ and $\alpha=\partial \left(\sum_{j\geq n}\gamma_{j}\right)$.

\end{proof}

\begin{corollary}\label{noseque}
If $(\libc(\Delta^n),\partial)$
is an inductive model, then:
\begin{enumerate}
\item[(i)] $H(\widehat{\mathbb L}(\Delta^n), \partial_{a_0}) = H(\widehat{\mathbb L}(\Lambda^i_n), \partial_{a_0}) = 0$ for any $i= 0,\dots , n$.
\item[(ii)] $
H(\libc(\dot\Delta^n),\partial_{a_0})=\lib(\Omega),\quad\text{with}\quad  \Omega=\partial_{a_0}a_{0\dots n}.
$
\end{enumerate}
\end{corollary}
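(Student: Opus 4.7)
My plan is to prove both claims simultaneously by induction on $n$, using Proposition~\ref{auxi} with carefully chosen decompositions $(V, W)$ of the generating sets. The structural input is the inductivity of the model, which gives $\partial_{a_0} a_{0 \dots n} \in \libc(\dot\Delta^n)$, together with the face-restricted form of $\partial$: for any proper face $\sigma \subsetneq \{0, \dots, n\}$, the element $\partial a_\sigma$ involves only generators $a_\tau$ with $\tau \subseteq \sigma$ (a consequence of compatibility with the coface maps). The base case $n = 1$ amounts to understanding the homology of the LS-interval and its sub-DGL's, which I verify directly.

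For part (ii), I filter $\dot\Delta^n$ as $\Lambda^0_n \cup \{a_{1 \dots n}\}$. By the induction hypothesis applied to statement (i) for horns at lower dimensions, I first establish $H(\libc(\Lambda^0_n), \partial_{a_0}) = 0$ (note $a_0 \in \Lambda^0_n$ since it lies in every $\delta_j(\Delta^{n-1})$ with $j \neq 0$, so $\partial_{a_0}$ restricts here). The generator $a_{1 \dots n}$ adjoined to $\libc(\Lambda^0_n)$ to obtain $\libc(\dot\Delta^n)$ satisfies $\partial a_{1 \dots n} \in \libc(\Lambda^0_n)$ and introduces exactly one new cohomology class. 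The inductivity condition $\partial_{a_0} a_{0 \dots n} \in \libc(\dot\Delta^n)$ identifies this class with $\Omega = \partial_{a_0} a_{0 \dots n}$, yielding a quasi-isomorphism $\libc(\Omega) \hookrightarrow (\libc(\dot\Delta^n), \partial_{a_0})$.

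For part (i) on $\libc(\Delta^n)$, adjoining $a_{0 \dots n}$ to $\libc(\dot\Delta^n)$ with $\partial_{a_0} a_{0 \dots n} = \Omega$ corresponds, at the homology level, to coning off the generator $\Omega$ of $H(\libc(\dot\Delta^n), \partial_{a_0})$. Using (ii), the extended DGL is compared with the cone DGL $(\libc(\Omega, a_{0 \dots n}), \partial a_{0 \dots n} = \Omega, \partial \Omega = 0)$ via a gluing/two-of-three argument; this cone is acyclic (a standard calculation via the PBW isomorphism and an explicit contraction on the free associative algebra), giving $H(\libc(\Delta^n), \partial_{a_0}) = 0$. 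For horns $\Lambda^i_n$: I express $\Lambda^i_n$ as an iterated gluing of $(n-1)$-faces $\delta_j(\Delta^{n-1})$, $j \neq i$, along their common sub-simplices. Iterative application of Proposition~\ref{auxi}, combined with the inductive acyclicity of each face model, yields $H(\libc(\Lambda^i_n), \partial_{a_0}) = 0$.

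The main obstacle is verifying the hypothesis $\partial_{a_0}(\cI) \subset \cI$ at each application of Proposition~\ref{auxi}. A priori, $\partial_{a_0} a_\sigma$ for $a_\sigma \in W$ could contain terms in $\libc(V) \setminus \cI$, arising either from the $\ad_{a_0}$-twist or from face terms of $\partial$. The inductivity of the model and the face-restricted form of $\partial$ are precisely the mechanisms that prevent this, so careful bookkeeping of which generators land in $V$ versus $W$ at each filtration stage is essential. A further subtlety in part (ii) is pinning down the new homology class as $\Omega$: a priori the class of $a_{1\dots n}$ modulo $\partial_{a_0}$-boundaries is only defined up to decomposables, and one must use the explicit form of $\Omega$ and its linear part $da_{0\dots n}$ (which contains $a_{1\dots n}$ with coefficient $+1$) to identify the two.
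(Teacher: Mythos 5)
Your outline can be pushed through, but it is a genuinely different and considerably more laborious route than the paper's, and one step is not actually justified by the tools at hand. The paper proves (i) in a single stroke, with no induction on $n$, no face-by-face gluing, and no detour through (ii): take $V=\mathbb{Q}a_0$ and let $W$ be spanned by the differences $a_i-a_0$ together with all generators of degree $\geq 0$ of $\Delta^n$ (resp.\ of $\Lambda^i_n$). Then $(W,\partial_1)$ is the desuspended \emph{reduced} chain complex of a contractible complex, hence acyclic; the ideal $(W)$ is a differential ideal for elementary degree reasons (a bracket monomial in $a_0$ alone has degree $-1$, and $\partial_{a_0}(a_i-a_0)=-\frac12[a_i-a_0,a_i-a_0]$); and \propref{auxi} together with \lemref{thel} gives $H(\libc(\Delta^n),\partial_{a_0})=H(\lib(a_0),\partial_{a_0})=0$ at once. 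This makes your iterated horn gluing and all its bookkeeping unnecessary. More seriously, your passage from (ii) to (i) for $\Delta^n$ --- comparing $\libc(\Delta^n)=\libc(\dot\Delta^n)\,\widehat{\amalg}\,\libc(a_{0\dots n})$ with the acyclic cone $\lib(\Omega)\,\widehat{\amalg}\,\libc(a_{0\dots n})$ via the quasi-isomorphism $\lib(\Omega)\hookrightarrow\libc(\dot\Delta^n)$ --- silently invokes the fact that taking the free product with $\libc(a_{0\dots n})$ preserves this quasi-isomorphism. That is a homotopy-invariance-of-pushouts statement for complete free DGL's which is true but is nowhere established in this paper and is not a consequence of \propref{auxi}; as written, this is a gap.

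For (ii) your filtration $\dot\Delta^n=\Lambda^0_n\cup\{a_{1\dots n}\}$ is essentially the paper's (which uses $\Lambda^n_n\cup\{a_{0\dots n-1}\}$), and you correctly isolate the two delicate points (the differential-ideal hypothesis and the identification of the new class). But ``introduces exactly one new cohomology class'' understates the assertion: the claim is that $H(\libc(\dot\Delta^n),\partial_{a_0})$ is the \emph{free Lie algebra} $\lib(\Omega)$, which for $n$ odd is two-dimensional since $[\Omega,\Omega]\neq 0$. The paper settles this without any ``up to decomposables'' ambiguity by a small trick worth knowing: write $\libc(\Lambda^n_n)=\libc(\mathbb{Q}a_0\oplus W)$, form the abstract DGL $L=\libc(\mathbb{Q}a_0\oplus\mathbb{Q}u\oplus W)$ with $\partial u=0$ and $|u|=n-2$, compute $H(L)=\lib(u)$ from \propref{auxi} and \lemref{thel}, and then exhibit an explicit DGL \emph{isomorphism} $f\colon L\to(\libc(\dot\Delta^n),\partial_{a_0})$ with $f(u)=\Omega=\partial_{a_0}a_{0\dots n}$; the inductivity hypothesis is used exactly to guarantee that $\Omega$ lies in $\libc(\dot\Delta^n)$, and $f$ is an isomorphism because the linear part of $\Omega$ contains the missing facet with coefficient $\pm 1$. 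If you adopt the paper's one-step proof of (i) and replace your ``one new class'' step by this isomorphism, your argument collapses to the paper's.
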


\begin{proof} (i) This follows from Proposition \ref{auxi}. Let $W$ be  the vector space generated by the elements $a_i-a_0$ and the elements of higher degree. We verify directly that $H(W,d_1)= 0$ and that the ideal generated by $W$ is a differential ideal.

(ii) Write $(\libc(\Lambda^n_n),\partial_{a_0})=(\libc(\Q a_{0}\oplus W),\partial_{a_0})$, with
$W$ as above.

Let $(L,\partial)=(\libc(\Q a_{0}\oplus \Q u\oplus W),\partial)$ be the complete DGL,
with $|u|=n-2$, $\partial u=0$ and $\partial = \partial_{a_0}$ on $\mathbb Qa_0\oplus W$. This DGL satisfies the hypotheses of \propref{auxi} and by Lemma \ref{thel},
\begin{equation}\label{equa:lu}
H(L,\partial)=H(\libc(a_{0},u),\partial_{a_{0}})=\libc(u).
\end{equation}
We extend the canonical inclusion
$(\libc(\Lambda^n_n),\partial_{a_{0}})\to (\libc(\dot\Delta^n),\partial_{a_{0}})$
to a morphism,
$$f\colon (L,\partial)\to (\libc(\dot\Delta^n),\partial_{a_{0}}),$$
given by $f(u)=\partial_{a_{0}}a_{0\ldots n}$,
which is well defined thanks to the inductive hypothesis.
Observing its behavior on the generators we deduce that $f$ is a DGL isomorphism.
Using (\ref{equa:lu}), we get
$$\libc(u)\cong \libc(\Omega)=H(\libc(\dot\Delta^n),\partial_{a_{0}}).$$
\end{proof}

\begin{lemma} \label{thel} Let $(\widehat{\mathbb L}(\mathbb Q a \oplus V), \partial)$ be a DGL in which $a$ is a Maurer-Cartan element,  dim$\, V<\infty$ and $\partial (V)\subset \widehat{\mathbb L}(V)$. Then, the injection  $(\widehat{\mathbb L}(V), \partial)\hookrightarrow (\widehat{\mathbb L}(\mathbb Q a \oplus V), \partial)$ is a quasi-isomorphism.\end{lemma}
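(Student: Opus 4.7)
The plan has three parts: a DGL retraction splitting the inclusion, an auxiliary Lawrence--Sullivan extension that makes Proposition \ref{auxi} applicable, and a comparison step to deduce the lemma from these.

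\textbf{First I will construct a retraction.} Define $r\colon \widehat{\mathbb L}(\mathbb Q a\oplus V)\to \widehat{\mathbb L}(V)$ to be the unique DGL morphism with $r(a)=0$ and $r|_V=\mathrm{id}_V$. Its chain-map property follows from the Maurer--Cartan equation $\partial a=-\tfrac12[a,a]$, which gives $r(\partial a)=-\tfrac12 r([a,a])=0=\partial r(a)$, together with the hypothesis $\partial V\subset \widehat{\mathbb L}(V)$, which gives $r(\partial v)=\partial v=\partial r(v)$ for $v\in V$. Since $r\circ i=\mathrm{id}$, the inclusion $i$ splits as a chain map and we obtain a direct-sum decomposition $\widehat{\mathbb L}(\mathbb Q a\oplus V)\cong \widehat{\mathbb L}(V)\oplus \ker r$ of chain complexes. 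Therefore it suffices to show that the Lie ideal $\mathcal K=\ker r$, generated by $a$, is acyclic.

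\textbf{Next I will reduce acyclicity via a Lawrence--Sullivan extension.} Enlarge the DGL to
$$\tilde L=\widehat{\mathbb L}(V\oplus \mathbb Q a\oplus \mathbb Q u),\qquad |u|=0,\qquad \tilde\partial u=-\tfrac{\operatorname{ad}_u}{e^{\operatorname{ad}_u}-1}(a)=-a+\tfrac12[u,a]-\tfrac1{12}[u,[u,a]]+\cdots,$$
so that the sub-DGL $\widehat{\mathbb L}(a,0,u)\subset \tilde L$ is the Lawrence--Sullivan interval joining the Maurer--Cartan elements $a$ and $0$. Apply Proposition \ref{auxi} with $W=\mathbb Q a\oplus \mathbb Q u$: the linear part satisfies $\tilde\partial_1 u=-a$ and $\tilde\partial_1 a=0$, so $H(W,\tilde\partial_1)=0$; and the ideal generated by $\{a,u\}$ is $\tilde\partial$-stable since both $\tilde\partial a$ and $\tilde\partial u$ lie in it. Proposition \ref{auxi} then gives that the projection $\pi\colon \tilde L\to \widehat{\mathbb L}(V)$ killing $a$ and $u$ is a quasi-isomorphism.

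\textbf{Finally I will combine these to deduce the lemma.} Let $i'\colon \widehat{\mathbb L}(\mathbb Q a\oplus V)\hookrightarrow \tilde L$ be the natural inclusion. One checks directly that $\pi\circ i'=r$, so on homology $H(r)=H(\pi)\circ H(i')$, and combined with $H(r)\circ H(i)=\mathrm{id}$, the claim reduces to showing $i'$ is a quasi-isomorphism. This is the \emph{main obstacle}: Proposition \ref{auxi} does not apply directly to a projection $\tilde L\to \widehat{\mathbb L}(\mathbb Q a\oplus V)$ because $\tilde\partial_1 u=-a\notin \mathbb Q u$. I will address this by filtering $\tilde L$ by the number of occurrences of $u$ and running the associated spectral sequence: the $E_0$-page carries the part of $\tilde\partial$ preserving the $u$-count, and the higher differentials encode the Bernoulli-series corrections in $\tilde\partial u$. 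The collapse of this spectral sequence is fed by the acyclicity of the one-generator DGL $\widehat{\mathbb L}(\mathbb Q a)$ with $\partial a=-\tfrac12[a,a]$, which one verifies directly: by graded Jacobi, $[a,[a,a]]=0$, so the complex is concentrated in degrees $-1$ and $-2$ (spanned by $a$ and $[a,a]$), and the differential $\partial a=-\tfrac12[a,a]$ is an isomorphism between them. Once $i'$ is shown to be a quasi-isomorphism by this propagation, $H(r)$ is an isomorphism and, together with $r\circ i=\mathrm{id}$, this forces $H(i)$ to be an isomorphism.
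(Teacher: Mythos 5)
Your first two steps are sound: the retraction $r$ killing $a$ is a chain map because $\partial a=-\tfrac12[a,a]$ and $\partial V\subset\widehat{\mathbb L}(V)$, and the cone $\tilde L$ obtained by adjoining $u$ with $\tilde\partial u=-\frac{\ad_u}{e^{\ad_u}-1}(a)$ does satisfy the hypotheses of Proposition \ref{auxi} with $W=\mathbb Q a\oplus\mathbb Q u$, so $\pi\colon\tilde L\to\widehat{\mathbb L}(V)$ is a quasi-isomorphism. The gap is in the last step. Since $\pi\circ i'=r$ and $r\circ i=\id$, and $\pi$ is already known to be a quasi-isomorphism, the assertion that $i'$ is a quasi-isomorphism is logically equivalent to the lemma itself; you have reduced the statement to an equivalent one, and everything now rests on the sketched spectral sequence. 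That sketch does not work as described: $\tilde\partial$ is not compatible with any filtration by the number of occurrences of $u$, because the leading term $-a$ of $\tilde\partial u$ strictly lowers the $u$-count while the correction $-\tfrac1{12}[u,[u,a]]$ strictly raises it, so neither $\{u\text{-count}\le p\}$ nor $\{u\text{-count}\ge p\}$ is a filtration by subcomplexes, and there is no associated graded complex whose differential is ``the part of $\tilde\partial$ preserving the $u$-count''. The acyclicity of $\widehat{\mathbb L}(a)$, which you verify correctly, does not by itself propagate to the acyclicity of the ideal $\ker r$ generated by $a$ inside $\widehat{\mathbb L}(\mathbb Q a\oplus V)$, which is what actually has to be shown.

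For comparison, the paper avoids the cone altogether: it considers the ideal $J$ generated by $V$ (not by $a$), which is the free complete Lie algebra on $\overline V=\{\ad_a^q(v)\}_{q\ge0}$, computes that the linear part of the differential with respect to these generators satisfies $d_1\ad_a^q(v)=-\ad_a^{q+1}(v)+(-1)^q\ad_a^q(\partial_1 v)$ for $q$ odd, so that the complement $\oplus_{q\ge1}\ad_a^q(V)$ of $V$ in $\overline V$ is $d_1$-acyclic, and then applies Proposition \ref{auxi} to this presentation; since $L/J=\mathbb L(a)$ is acyclic, $H(J)\cong H(L)$ and the lemma follows. If you want to keep your construction, you need an argument of comparable substance for the acyclicity of $\ker r$ --- for instance this same change of generators --- in place of the $u$-count filtration.
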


\begin{proof} Denote by $J$ the ideal generated by $V$. Since $V$ is finite dimensional,
 $$J = \widehat{\mathbb L}(\ad_a^qv, \vspace{2mm} \mbox{$v$ basis of $V$ and $q\geq 0$}).$$
 We then denote by $d_1$ the linear part of the differential $d$ in $J$.

We claim that for $v\in V$, and $q$ odd, $$d_1 \ad_a^q(v)= -\ad_a^{q+1}(v)+ (-1)^q \ad_a^q(\partial_1v).$$
This holds for $q= 1$ because $d_1 [a,v] = -\frac{1}{2}\bigl[[a,a],v\bigr] -[a, \partial_1 v ]= -\ad^2_a(v)-[a, \partial_1 v ]$.
Suppose this is true for $q-2$. Then, since $\ad_a^q(v) = \frac{1}{2}\bigl[[a,a], \ad_a^{q-2}(v)\bigr]$, we have
$$\begin{aligned}
d_1 \ad_a^q(v)&= \frac{1}{2} d_1 \bigl[[a,a], \ad_a^{q-2}(v)\bigr]= -\frac{1}{2}\bigl[[a,a], \ad_a^{q-1}(v)\bigr] +(-1)^q \ad_a^q(\partial_1v)\\&= -\ad_a^{q+1}(v)+(-1)^q\ad_a^q(\partial_1v).\end{aligned}
$$The result is then a consequence of Proposition \ref{auxi}.

 \end{proof}

\begin{proposition}\label{prop:eltriangulo}
 The model of the triangle is given by
\begin{equation}\label{triangulo}
\lasu_2=(\libc(\Delta^2),\partial)=(\hL (a_0,a_1,a_2,a_{01},a_{12},a_{02},a_{012}),\partial),
\end{equation}
in which $(\libc(a_0, a_1, a_{01}),\partial)$, $(\libc(a_1, a_2, a_{12}),\partial)$ and $(\libc(a_0, a_2, a_{02}),\partial)$ are  LS-intervals and where their inclusions in $\lasu_{2}$ give  the coface maps. Moreover, the differential of
$a_{012}$ is defined by
\begin{equation*}
\partial_{a_0}a_{012}=a_{01}*a_{12}*a_{02}^{-1}.
\end{equation*}
\end{proposition}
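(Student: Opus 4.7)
The plan is to construct $\lasu_2=(\libc(\Delta^2),\partial)$ explicitly and verify the three defining conditions of \defref{modelo}. First I would obtain the restriction to the boundary $\libc(\dot\Delta^2)$ by gluing the three Lawrence--Sullivan intervals of \defref{def:LSconstruction} on $\{a_0,a_1,a_{01}\}$, $\{a_1,a_2,a_{12}\}$ and $\{a_0,a_2,a_{02}\}$ along their common vertex generators; on this sub-DGL the Maurer--Cartan condition on each $a_i$, the linear-part condition, and the identification of the three natural inclusions with the coface maps $\delta_0,\delta_1,\delta_2$ are all immediate. The work then reduces to extending $\partial$ to $a_{012}$ by the prescribed formula and checking $\partial^2=0$.

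Set $\xi:=a_{01}*a_{12}*a_{02}^{-1}$. Since $\xi=a_{01}+a_{12}-a_{02}+\text{brackets}$, its linear part matches the simplicial differential of $\Delta^2$, so once the extension $\partial a_{012}:=\xi-[a_0,a_{012}]$ (equivalently $\partial_{a_0}a_{012}=\xi$) is shown to square to zero, condition (2) of \defref{modelo} is automatic. The crux is to prove that $\xi$ is a $\partial_{a_0}$-cycle, which I would do via the gauge action. A direct computation from \defref{def:LSconstruction} yields $x\cG b=a$ in any LS-interval $(\libc(a,b,x),\partial)$, hence $a_{ij}\cG a_j=a_i$ for each edge. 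Combined with the associativity $(u*v)\cG w=u\cG(v\cG w)$ and the relation $a_{02}^{-1}\cG a_0=a_2$ (obtained from $a_{02}\cG a_2=a_0$ and $a_{02}^{-1}*a_{02}=0$), this telescopes to
\[
\xi\cG a_0=a_{01}\cG\bigl(a_{12}\cG(a_{02}^{-1}\cG a_0)\bigr)=a_{01}\cG(a_{12}\cG a_2)=a_{01}\cG a_1=a_0.
\]
A short rearrangement of the defining formula for $\cG$ gives, for any $y\in L_0$ and any $a\in\mc(L)$,
\[
y\cG a-a=-\tfrac{e^{\ad_y}-1}{\ad_y}(\partial_{a}y),
\]
and since $\tfrac{e^{\ad_y}-1}{\ad_y}=1+\tfrac{1}{2}\ad_y+\cdots$ is invertible in the completed setting, the equality $\xi\cG a_0=a_0$ forces $\partial_{a_0}\xi=0$.

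Granted the cycle condition, $\partial a_{012}:=\xi-[a_0,a_{012}]$ extends uniquely to a derivation of $\libc(\Delta^2)$, and it remains to verify $\partial^{2}a_{012}=0$. A direct computation using $\partial a_0=-\tfrac{1}{2}[a_0,a_0]$ and the graded Jacobi identity yields the identity $\partial_{a_0}^{\,2}=\partial^{\,2}$ of operators on any complete DGL containing the Maurer--Cartan element $a_0$; hence $\partial^2 a_{012}=\partial_{a_0}^{\,2}a_{012}=\partial_{a_0}\xi=0$. The inductive condition $\partial_{a_0}a_{012}\in\libc(\dot\Delta^2)$ is visible from the definition of $\xi$. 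The main obstacle is thus the gauge identity $\xi\cG a_0=a_0$; the rest is the LS-interval package of \secref{sec:prems} together with routine derivation bookkeeping.
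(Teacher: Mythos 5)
Your proof is correct, but it reaches the key identity $\partial_{a_0}(a_{01}*a_{12}*a_{02}^{-1})=0$ by a genuinely different route than the paper. The paper composes the Lawrence--Sullivan subdivision theorem (Theorem~\ref{prop:subdivision}) to produce a DGL morphism $\psi\colon\lasu_1\to\libc(\dot\Delta^2)$ with $\psi(a)=\psi(b)=a_0$ and $\psi(x)=a_{01}*a_{12}*a_{02}^{-1}$; applying $\psi$ to the formula (\ref{equa:dxLSinterval}) for $\partial x$ kills the $\frac{\ad_x}{e^{\ad_x}-1}(b-a)$ term and leaves $\partial\xi=\ad_\xi(a_0)$ at once. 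You instead observe that $x\cG b=a$ in any LS-interval (a correct one-line computation from (\ref{equa:dxLSinterval})), telescope via the compatibility $(u*v)\cG w=u\cG(v\cG w)$ to get the fixed-point relation $\xi\cG a_0=a_0$, and then extract $\partial_{a_0}\xi=0$ from the rearrangement $y\cG a-a=-\frac{e^{\ad_y}-1}{\ad_y}(\partial_a y)$ together with invertibility of $\frac{e^{\ad_y}-1}{\ad_y}$ in the complete setting; all three ingredients check out, and your operator identity $\partial_{a_0}^2=\partial^2$ (valid for any Maurer--Cartan element) correctly closes the argument that $\partial^2 a_{012}=0$. The paper's route is shorter because it imports \cite[Theorem 2]{LS} wholesale and extends naturally to higher simplices via concatenation (as in Example~\ref{ejemplo}); yours is more self-contained relative to Section~\ref{sec:prems} and makes the geometric content explicit --- the boundary loop of the triangle fixes the basepoint under the gauge flow, and any gauge-fixing element is automatically a $\partial_{a_0}$-cycle --- which is a reusable principle in its own right.
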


\begin{proof}
The composition of morphisms as in Theorem \ref{prop:subdivision}
defines the DGL morphism
$\psi\colon (\hL(a,b,x),d)\to (\hL (a_0,a_1,a_2,a_{01},a_{12},a_{02},a_{012}),\partial)$
in which $\psi(a)=\psi(b)=a_{0}$ and $\psi(x)=a_{01}\ast a_{12}\ast a^{-1}_{02}$. Hence,
\begin{eqnarray*}
\partial(a_{01}\ast a_{12}\ast a^{-1}_{02})
&=&
\partial\psi (x) = \psi (\partial x)=
\psi\left(
\ad_{x}(b)+\sum_{k\geq 0}\frac{B_{k}}{k!}\ad^k_{x}(b-a)\right)\\
&=&
{\ad}_{a_{01}\ast a_{12}\ast a^{-1}_{02}}(a_0).
\end{eqnarray*}
Finally observe that the linear part of $\partial(a_{012})$ is $a_{12}-a_{02}+ a_{01}$.
In geometrical terms, this expression
draws the border of $\Delta^2$ starting from the base point~$a_0$.
\end{proof}

\begin{proof}[Proof of \thmref{existencia}.]
We proceed by induction on $n$.  Let $(\lasu_0, \partial)=(\libc(a_0),\partial)$ in which $a_0$ is a Maurer-Cartan element. On the other hand, let  $\lasu_1=( \libc(a_0,a_1,a_{01}),\partial)$ be the LS-interval and let $\lasu_2$, the first one satisfying the inductive statement, be the model of the triangle as in  Proposition \ref{prop:eltriangulo}.

 Assume that $\lasu_{q}=(\libc(\Delta^q),\partial)$ is defined for $q<n$, as in the statement, with $n\geq 3$.
 The condition (3) of  \defref{modelo} defines the differential $\partial$ on each face of $\Delta^n$. In particular, by induction,
 $\partial_{a_{0}}a_{0\ldots n-1}\in  \libc(\Lambda^n_{n})$.
 As $H(\libc(\Lambda_{n}^n),\partial_{a_0})=0$ by Corollary \ref{noseque}(i), there exists $\Gamma\in \libc(\Lambda_{n}^n)$ such that
 $|\Gamma|=n-2$ and $\partial_{a_{0}} a_{0\ldots n-1}=\partial_{a_{0}}\Gamma$.
 We set,
 $$\partial_{a_{0}}a_{0\ldots n}=(-1)^n\left(a_{0\ldots n-1}-\Gamma\right).$$
 By construction, this model satisfies the conditions (1) and (3) of \defref{modelo}. Now denote
 by $\partial_{1}a_{0\ldots n}$ the linear part of  $\partial_{a_{0}}a_{0\ldots n}$ and by $\Gamma_1$ the linear part of $\Gamma$. Since $\partial_1^2= 0$,
 $\partial_{1}\Gamma_{1}=\partial_1(a_{0\dots n-1})$. Let $\omega$ be the
 difference $\omega=(-1)^{n-1}\Gamma_{1}-\sum_{i=0}^{n-1}(-1)^ia_{0\ldots \widehat{i}\ldots n}$. Since $\partial_1(\sum_{i=0}^n (-1)^ia_{0\dots \widehat{i}\dots n}) = 0$,
 $$\partial_{1}\omega=(-1)^{n-1}\partial_1(a_{0\dots n-1}) + (-1)^n \partial_1(a_{0\dots n-1})= 0.$$

 Thus, $\omega$ is a $\partial_1$-cycle of degree $n-2$  in $\libc(\Lambda^n_n)$.
 Hence, there is a linear element, $\gamma$, of degree $n-1$ in $\libc(\Lambda^n_{n})$
 such that $\partial_{1}\gamma=\omega$.
 As $\libc(\Lambda^n_{n})$ is generated by elements of degree $\leq n-2$, we have $\gamma=0$ and $\omega=0$. Therefore, we get the expected linear differential,
 $$\partial_{1}a_{0\ldots n}=(-1)^n a_{0\ldots n-1}+\sum_{i=0}^{n-1} (-1)^ia_{0\ldots \widehat{i}\ldots n-1}.$$
 \end{proof}

We now prove the uniqueness up to isomorphism of the sequences of models.

\begin{theoremb}\label{unicidadcompleta}
Two sequences $\{(\libc(\sdelta^n),\partial)\}_{n\ge 0}$ and $\{(\libc(\sdelta^n),\partial')\}_{n\ge 0}$ of compatible models of $\mathbf\Delta$  are isomorphic: for    $n\ge 0$,  there are DGL isomorphisms,
$$
\varphi_n\colon(\libc(\sdelta^n),\partial)
\xrightarrow[]{\cong}
 (\libc(\sdelta^n),\partial'),
$$
which commute with the coface maps $\delta_i$, for   $i=0,\ldots,n$,
\begin{equation}\label{equa:petitcarre}
\xymatrix{
(\libc(\sdelta^n),\partial)
\ar[r]^{\varphi_n}_\cong&(\libc(\sdelta^n),\partial')\\
(\libc(\sdelta^{n-1}),\partial) \ar@{_{(}->}[u]^{\delta_i} \ar[r]^{\varphi_{n-1}}_\cong&(\libc(\sdelta^{n-1}),\partial'),\ar@{_{(}->}[u]_{\delta_i}}
\end{equation}
and such that
$\im(\varphi_n-\id)\subset \libc^{\ge 2}(\sdelta^n)$.
\end{theoremb}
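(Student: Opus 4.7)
The natural approach is induction on $n$. The base cases are immediate: for $n=0$ the Maurer-Cartan condition fixes $\partial a_0=-\frac12[a_0,a_0]$, and for $n=1$ Theorem~\ref{unicols} forces $\partial=\partial'$ on the Lawrence-Sullivan interval, so $\varphi_0=\varphi_1=\id$. In the inductive step, I would first extend $\varphi_{n-1}$ to a DGL morphism on $\libc(\sfdelta^n)$ by the rule $\varphi_n\circ\delta_i=\delta_i\circ\varphi_{n-1}$; well-definedness on the overlaps $\delta_i(\sdelta^{n-1})\cap\delta_j(\sdelta^{n-1})$ comes from the purely combinatorial cosimplicial identities combined with the inductive assumption on $\varphi_{n-1}$ and its commutation with coface maps.

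The remaining step is to define $\varphi_n(a_{0\dots n})=a_{0\dots n}+\xi$ with $\xi\in\libc^{\ge 2}(\sdelta^n)_{n-1}$ solving $\partial'\xi=\eta$, where
$$\eta:=\varphi_n(\partial a_{0\dots n})-\partial'a_{0\dots n}.$$
A short check shows $\eta$ is a $\partial'$-cycle sitting in $\libc^{\ge 2}(\sfdelta^n)_{n-2}$: the linear parts cancel because $\partial_1=d=\partial'_1$, and $\partial'\varphi_n(\partial a_{0\dots n})=\varphi_n(\partial^2 a_{0\dots n})=0$. To produce $\xi$ inside $\libc^{\ge 2}$ --- crucial for the triangular form of $\varphi_n$ --- I would apply Proposition~\ref{auxi} with the decomposition $V=\Q a_0$, $W=\{a_i-a_0:i\ge 1\}\cup\{a_{i_0\dots i_p}:p\ge 1\}$ used in Corollary~\ref{noseque}(i). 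Then $H(W,d)=0$ is the acyclicity of the reduced simplicial chain complex of $\Delta^n$, and the ``moreover'' clause of the proposition furnishes a primitive, of the right word length, for any $\partial'$-cycle sitting in the ideal $\cI$ generated by $W$.

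The main technical point will be checking that $\eta$ lies in $\cI$; mere acyclicity of $(\libc(\sdelta^n),\partial')$ would give some primitive of $\eta$ but not one of word length $\ge 2$. I would handle this via the projection $\pi\colon\libc(\sdelta^n)\to\libc(a_0)$ defined by $a_i\mapsto a_0$ and $a_{i_0\dots i_p}\mapsto 0$ for $p\ge 1$: a short degree count (for $p\ge 1$ no all-vertex bracket has the right degree to appear in $\partial a_{i_0\dots i_p}$, and the linear part is obviously killed) shows that $\pi$ is automatically a DGL morphism in \emph{any} compatible model, so $\cI=\ker\pi$ is $\partial'$-stable. Enlarging the inductive hypothesis to include the invariant $\pi\circ\varphi_{n-1}=\pi$ then gives $\pi(\eta)=\pi(\partial a_{0\dots n})-\pi(\partial'a_{0\dots n})=0$, and the ``moreover'' clause of Proposition~\ref{auxi} applies inside $\cI$. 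The resulting $\varphi_n$ commutes with all cofaces by construction, and it is an isomorphism because it is upper-triangular with identity on the associated graded of the word-length filtration. The main obstacle I foresee is precisely this bookkeeping: propagating the auxiliary identity $\pi\circ\varphi=\pi$ through the induction so that the relevant cycle lies in the differential ideal where Proposition~\ref{auxi} can be applied.
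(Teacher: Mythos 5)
Your overall strategy --- induction on $n$, extension over the boundary via the cofaces, and correction of the top generator by a decomposable primitive produced by acyclicity --- is the same as the paper's, and your mechanism for forcing the primitive to have word length $\ge 2$ (the projection $\pi$ onto $\libc(a_0)$ together with the ``moreover'' clause of Proposition~\ref{auxi}) is sound. It is, however, more machinery than needed: the element you must integrate has degree $n-2\ge 0$, while $\libc(a_0)$ is concentrated in negative degrees, so $\pi$ kills it automatically and no auxiliary identity $\pi\circ\varphi=\pi$ has to be carried through the induction. The paper gets decomposability of the primitive even more cheaply: take any primitive $\omega$, observe that its linear part $\omega_1$ is a $\partial_1$-cycle of degree $n-1$, hence the $\partial_1$-boundary of a linear element of degree $n$, and there are no generators of degree $n$.

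The genuine gap is that your defining equation for $\xi$ is circular. In a compatible model the differential of the top cell is \emph{not} contained in $\libc(\sfdelta^n)$: already for the triangle, $\partial a_{012}=a_{01}*a_{12}*a_{02}^{-1}-[a_0,a_{012}]$, and in general $\partial a_{0\dots n}$ contains brackets in which $a_{0\dots n}$ itself appears (any bracket of $a_{0\dots n}$ with a single vertex has the required degree $n-2$). Consequently $\varphi_n(\partial a_{0\dots n})$ is not determined by $\varphi_{n-1}$ and the cofaces --- it depends on the unknown $\xi$ --- so your $\eta$ is not a well-defined element of $\libc^{\ge 2}(\sfdelta^n)$ and ``$\partial'\xi=\eta$'' is an implicit equation. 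This is precisely what the paper's notion of \emph{inductive} model and the twisted differential are for: one reduces, via Theorem~\ref{existencia} and transitivity of the isomorphism relation, to the case where the source sequence is inductive, so that $\partial_{a_0}a_{0\dots n}\in\libc(\sfdelta^n)$ lies where $\varphi_n$ is already defined; one then solves $\partial'_{a_0}\omega=\varphi_n\partial_{a_0}a_{0\dots n}-\partial'_{a_0}a_{0\dots n}$ for the \emph{twisted} differentials and checks that $\varphi_n(a_{0\dots n})=a_{0\dots n}+\omega$ also intertwines $\partial$ and $\partial'$ because $\varphi_n(a_0)=a_0$. Alternatively you could solve your implicit equation by successive approximation in the word-length filtration (the $\xi$-dependence of the right-hand side strictly raises bracket length and the Lie algebra is complete), but some such argument must be supplied; as written, the construction of $\varphi_n(a_{0\dots n})$ does not get off the ground.
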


\begin{proof} Obviously there is only one choice for $(\libc(\sdelta^0),\partial)$ while $(\libc(\sdelta^1),\partial)$ is also uniquely determined by Theorem \ref{unicols}. Suppose   $n\ge 2$ and $\varphi_m$  defined for all $m<n$.
Then, the condition (3) of \defref{modelo} determines $\varphi_n$ on every generator of $\libc(\Delta^n)$ except on $x=a_{0{\dots}n}$.

We may assume without losing generality  that the sequence $\{(\libc(\sdelta^n),\partial)\}_{n\ge 0}$ is inductive.
Hence, as $\partial_{a_0}x\in\libc(\sfdelta^n)$, the hypothesis $\im(\varphi_i-\id)\subset \libc^{\ge 2}(\sdelta^i)$ for $i<n$,
implies that
$\varphi_{n}\partial_{a_{0}}x-\partial'_{a_{0}}x$
is   a decomposable element. Since this is a $\partial'_{a_{0}}$-cycle, there exists $\omega\in \libc(\Delta^n)$, $|\omega|=n-1$, such that
$\partial'_{a_{0}}\omega=\varphi_{n}\partial_{a_{0}}x-\partial'_{a_{0}}x$.
The linear part, $\omega_{1}$, of $\omega$ is a $\partial_{1}$-cycle and there exists a linear element
$\gamma\in\libc(\Delta^n)$, $|\gamma|=n$, such that $\partial'_{1}\gamma=\omega_{1}$.
By  degree reasons, we have $\gamma=0$ and $\omega_{1}=0$. Thus, we set
$$\varphi_{n}(x)=x+\omega.$$
As $\omega\in\libc^{\geq 2}(\Delta^n)$, the condition
$\im(\varphi_n-\id)\subset \libc^{\ge 2}(\sdelta^n)$ is fulfilled. Moreover, the coface maps being linear, the
square (\ref{equa:petitcarre}) is commutative.
\end{proof}

\begin{example}\label{ejemplo}{\bf The model of the tetrahedron}

\vskip.2cm
Let $(L,\partial)$ be a complete DGL, and  $e_1, \dots , e_n\in L_1$. We form the DGL $(L', \partial')= (\widehat{\mathbb L}(e_i, u_i), \partial')$ where $\partial'(u_i)= 0$ and $\partial'(e_i)= u_i$ and the DGL morphism $\gamma\colon  (L', \partial')\to (L, \partial)$
 defined by $\gamma(e_i) = e_i$ and $\gamma(u_i) = \partial e_i$. Now the product $u_1*\cdots *u_n$ is a linear combination of Lie brackets and in each of them we replace one and only one $u_i$ by the corresponding $e_i$. This defines an element $A_{e_1\dots e_n}\in L'$, and we define $B_{e_1\dots  e_n} = \gamma (A_{e_1\dots  e_n})$. By construction we have
 $$\partial B_{e_1 \dots e_n}= \partial e_1*\dots *\partial e_n,$$
 and its linear part is precisely $\sum_{i=1}^n e_i$.

For   the model  of the tetrahedron, we observe that via  \propref{prop:eltriangulo},
\begin{eqnarray*}
\partial_{a_{0}}B_{a_{012},a_{023},-a_{013}}
&=&
\partial_{a_{0}}a_{012} *
\partial_{a_{0}}a_{023}*
(-\partial_{a_{0}}a_{013})\\
&=&
a_{01}*a_{12}*a_{23}*a_{13}^{-1}*a_{01}^{-1}.
\end{eqnarray*}

On the other hand, a direct computation, using successively
\propref{prop:dadb},
the model of the triangle described in \propref{prop:eltriangulo}
and \propref{prop:expadx}, gives
\begin{eqnarray*}
\partial_{a_{0}}\left(e^{\ad_{a_{01}}}a_{123}\right)
&=&
e^{\ad_{a_{01}}}\partial_{a_{1}}a_{123}\\
&=&
e^{\ad_{a_{01}}} (a_{12}*a_{23}*a_{13}^{-1})\\
&=&
a_{01}*a_{12}*a_{23}*a_{13}^{-1}*a_{01}^{-1}.
\end{eqnarray*}
Finally, define
$$\partial_{a_{0}}a_{0123}=e^{\ad_{a_{01}}}a_{123}-B_{a_{012},a_{023},-a_{013}}.$$
\end{example}
We finish by finding base points in the barycenter of the simplex.
\begin{proposition}\label{prop:latotale}
Given $(\widehat{\mathbb L}(\Delta^n), \partial)$ a symmetric  model of $\Delta^n$, there exists a Maurer-Cartan element $a$  whose linear part is the barycentre of $\Delta^n$ and such that, for $n\ge 2$, $\im\partial_a\subset \widehat\lib(\dot\Delta^n)$.
 \end{proposition}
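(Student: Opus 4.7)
My strategy is to produce $a$ as a gauge transform of the vertex Maurer--Cartan element $a_0\in\widehat{\mathbb L}(\dot\Delta^n)$. For $n\ge 2$ the boundary $\dot\Delta^n$ contains the edges $a_{0k}$, and these provide exactly the gauge freedom needed to shift the linear part of $a_0$ to the barycentre $b=\frac{1}{n+1}\sum_{i=0}^n a_i$.

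First, set
$$
x\;=\;-\frac{1}{n+1}\sum_{k=1}^n a_{0k}\;\in\;\widehat{\mathbb L}(\dot\Delta^n)_0.
$$
Condition~(2) of Definition~\ref{modelo} gives $\partial_1 a_{0k}=a_k-a_0$, and a one-line computation then yields $\partial_1 x=a_0-b$.

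Next, define
$$
a\;:=\;x\,\cG\, a_0\;=\;e^{\ad_x}(a_0)\;-\;\frac{e^{\ad_x}-1}{\ad_x}(\partial x).
$$
Since the gauge action preserves Maurer--Cartan elements (Section~\ref{sec:prems}) and $a_0\in\MC(\lasu_n)$, we have $a\in\MC(\lasu_n)$. Extracting the bracket-length-one contributions in the two series gives $a=a_0-\partial_1 x+(\text{bracket-length}\ge 2)=b+(\text{bracket-length}\ge 2)$, so the linear part of $a$ is exactly $b$. Moreover, condition~(3) of Definition~\ref{modelo} ensures that $\partial$ preserves the sub-Lie-algebra $\widehat{\mathbb L}(\dot\Delta^n)$; since $a_0,x\in\widehat{\mathbb L}(\dot\Delta^n)$, every bracket and every $\partial$-term occurring in the gauge formula stays inside $\widehat{\mathbb L}(\dot\Delta^n)$, whence $a\in\widehat{\mathbb L}(\dot\Delta^n)$.

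The conclusion now follows at once: because $a$ lies in the sub-Lie-algebra $\widehat{\mathbb L}(\dot\Delta^n)$, the derivation $\ad_a$ preserves $\widehat{\mathbb L}(\dot\Delta^n)$; combined with $\partial$, the twisted differential $\partial_a=\partial+\ad_a$ does so as well, i.e.\ $\im\partial_a\subset\widehat{\mathbb L}(\dot\Delta^n)$.

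\textbf{Main obstacle.} The only genuinely non-trivial ingredient is the construction of the gauge parameter $x$: shifting the linear part from $a_0$ to $b$ inside the boundary requires a degree-$0$ linear element of $\widehat{\mathbb L}(\dot\Delta^n)$, equivalently an edge $a_{0k}$ of $\dot\Delta^n$. For $n=1$ the boundary $\dot\Delta^1=\{a_0,a_1\}$ contains no edges and no such shift can be realised inside $\widehat{\mathbb L}(\dot\Delta^1)$, which is exactly why the hypothesis $n\ge 2$ appears. The $\Sigma_{n+1}$-symmetry of the symmetric model is not needed for the existence claim; should a $\Sigma_{n+1}$-invariant $a$ be desired, one may symmetrise $x$ over the symmetric-group action before gauging.
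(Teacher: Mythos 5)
Your construction of the barycentric Maurer--Cartan element is correct and follows essentially the paper's route: the paper also produces $a$ by gauging a vertex along edges scaled by $\tfrac{1}{n+1}$ (iteratively, $x_0=a_n$, $x_r=\tfrac{a_{rn}}{n+1}\,\cG\,x_{r-1}$, whereas you gauge once by $x=-\tfrac{1}{n+1}\sum_k a_{0k}$), and the key computation $(y\,\cG\,u)_1=u_1-(\partial y)_1$ is exactly the one the paper uses. Your observation that $n\ge 2$ is what puts the edges, hence $a$, inside $\widehat{\mathbb L}(\dot\Delta^n)$ is also correct.

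The last assertion, however, is not the statement you proved. You read $\im\partial_a\subset\widehat{\mathbb L}(\dot\Delta^n)$ as saying that $\partial_a$ preserves the sub-Lie algebra $\widehat{\mathbb L}(\dot\Delta^n)$ --- but that holds for \emph{any} Maurer--Cartan element lying in $\widehat{\mathbb L}(\dot\Delta^n)$ and carries no information about the particular $a$ constructed. The non-trivial content concerns the top generator: $\partial_a a_{0\dots n}\in\widehat{\mathbb L}(\dot\Delta^n)$, i.e.\ twisting at the barycentre removes the letter $a_{0\dots n}$ from its own differential, in exact analogy with the inductive condition $\partial_{a_0}a_{0\dots n}\in\widehat{\mathbb L}(\dot\Delta^n)$ --- and indeed the paper's proof of this part consists precisely in invoking ``the behavior of the differential on inductive models.'' Writing $\partial_a a_{0\dots n}=\partial a_{0\dots n}+[a,a_{0\dots n}]$, one must check that the part of $\partial a_{0\dots n}$ containing the letter $a_{0\dots n}$ is exactly $-[a,a_{0\dots n}]$; this is where the $\Sigma_{n+1}$-equivariance of $\partial$ enters (for a non-symmetric inductive model the corresponding term is $-[a_0,a_{0\dots n}]$ and the cancellation occurs at the vertex $a_0$, not at the barycentre). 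Consequently your closing remark that the symmetry hypothesis is not needed is also incorrect for this half of the statement, and your argument leaves the substantive claim unproved.
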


\begin{proof} Consider the sequence of Maurer-Cartan elements $x_0, \cdots , x_n$ defined by
$$
x_0= a_n,\quad
x_r = \frac{a_{rn}}{n+1} \cG x_{r-1}.
$$
Denote by $u_1$ the linear part of an element $u$. Since for $y\in L_0$ and $u\in \mc(L)$, $(y\cG u)_1= u_1-(dy)_1$, we have $(x_n)_1 = \sum_{i=0}^n \displaystyle\frac{a_i}{n+1}$.

The last part of the statement is a consequence of the behavior of the differential on inductive models.
\end{proof}
%

%%%%%%%%%%%%%%%%%%%%%%%%%%%%%%%%%
\section{Symmetric models of  $\mathbf\Delta$ and the cosimplicial structure}\label{sec:symmetric}

Let $a_{i_0\dots i_p}$ be a generator of $\libc(\Delta^n)$ and $\sigma\in \Sigma_{p+1}$ of signature $\varepsilon_{\sigma}$.
We set
 $$a_{i_{\sigma (0)}\ldots i_{\sigma (p)}}=\varepsilon_\sigma a_{i_0\ldots i_p}.$$
  With this notation, we define an action of the symmetric group  $\Sigma_{n+1}$  on the generators of  $\libc(\Delta^n)$ by the rule,
$$
\sigma\cdot a_{i_0\ldots i_p}=a_{\sigma(i_0)\ldots\sigma(i_p)}.
$$
We extend it to brackets by $\sigma\cdot [a,b]=[\sigma\cdot a,\sigma\cdot b]$
and get an action on $\libc(\Delta^n)$.

 \begin{definition} A  sequence $\{(\libc(\sdelta^n),\partial)\}_{n\ge0}$ of compatible models of $\mathbf\Delta$ is called {\em symmetric} if for the above action each $(\widehat{\mathbb L}(\Delta^n),\partial)$ is a $\Sigma_{n+1}$-DGL, that is, a DGL whose bracket and differential are compatible with the $\Sigma_{n+1}$ action.
 \end{definition}

Since $(\lasu_n, \partial_1)$ is $\Sigma_{n+1}$-DGL, we have

\begin{lemma}\label{lem2}
$$H_q(\widehat{\mathbb L}(\Delta^n)^{\Sigma_{n+1}}, \partial_1) = \left\{
\begin{array}{ll} \mathbb Q[\frac{\sum a_i}{n+1}] \hspace{5mm}\mbox{} & \mbox{\text{\em  if} $q=-1$},\\
\,0 & \mbox{\em if $q\geq 0$.}
\end{array}\right.$$
\end{lemma}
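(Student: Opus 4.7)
The plan is to reduce the computation to the full homology of $\lasu_n$ and then extract the $\Sigma_{n+1}$-invariants. Over $\Q$, taking invariants under a finite group is exact (Maschke), so it commutes with homology, giving
$$H_*(\lasu_n^{\Sigma_{n+1}}, \partial_1) \;=\; H_*(\lasu_n, \partial_1)^{\Sigma_{n+1}}.$$

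Next, I would compute $H_*(\lasu_n, \partial_1)$. The linear complex $(\Delta^n, \partial_1)$ is the desuspended simplicial chain complex of the standard $n$-simplex; since $\Delta^n$ is contractible, it is acyclic except in degree $-1$, where the homology is $\Q$, generated by the class of any vertex. In characteristic zero, the free Lie algebra functor preserves quasi-isomorphisms, a standard consequence of the $\Sigma_k$-equivariant decomposition $\lib^k(V)\cong \mathrm{Lie}(k)\otimes_{\Sigma_k}V^{\otimes k}$ and the semisimplicity of $\Sigma_k$-representations. Since $\partial_1$ preserves the bracket-length filtration, the argument passes to the completion and yields
$$H_*(\lasu_n,\partial_1)\;\cong\;\hL\bigl(H_*(\Delta^n,\partial_1)\bigr)\;=\;\lib(\Q\cdot a),$$
the free graded Lie algebra on one generator $a$ of degree $-1$. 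The graded Jacobi identity forces $[a,[a,a]]=0$, so this is simply $\Q a\oplus \Q[a,a]$, concentrated in degrees $-1$ and $-2$.

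Finally, since each $a_i$ differs from $a_0$ by $\partial_1 a_{0i}$, all vertices are mutually homologous; hence the $\Sigma_{n+1}$-action on $H_*(\Delta^n,\partial_1)$, and therefore on $H_*(\lasu_n,\partial_1)$, is trivial, so every class is invariant. The element $\tfrac{a_0+\cdots+a_n}{n+1}$ is both $\Sigma_{n+1}$-invariant and homologous to $a_0$, so it represents the generator of $H_{-1}(\lasu_n^{\Sigma_{n+1}},\partial_1)$. For $q\ge 0$ the ambient homology $H_q(\lasu_n,\partial_1)$ already vanishes (the full homology sits only in degrees $-1$ and $-2$), so the invariants vanish too, which is exactly the claim.

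The main technical point is the identification $H_*(\lasu_n,\partial_1)\cong\hL(H_*(\Delta^n,\partial_1))$, but this is a standard property of free Lie algebras over a field of characteristic zero, and its compatibility with the completion poses no difficulty since $\partial_1$ respects the bracket-length filtration and homology commutes with the resulting direct product over bracket lengths.
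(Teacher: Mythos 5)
Your proof is correct, and its skeleton is the same as the paper's: pass to the full homology $H(\widehat{\mathbb L}(\Delta^n),\partial_1)=\mathbb L(a_0)$, observe that the invariant homology sits inside it, and identify the degree $-1$ generator with the symmetrized vertex $\frac{\sum_i a_i}{n+1}$. The differences are in the supporting details. First, you justify the central computation $H(\widehat{\mathbb L}(\Delta^n),\partial_1)\cong\mathbb L(a_0)$ by the operadic K\"unneth argument (the decomposition $\lib^k(V)\cong \mathrm{Lie}(k)\otimes_{\Sigma_k}V^{\otimes k}$, exactness of $\Sigma_k$-coinvariants over $\Q$, and commutation of homology with the product over bracket lengths); the paper instead obtains this kind of statement from its Proposition \ref{auxi}, by killing the acyclic differential ideal generated by the $a_i-a_0$ and the higher simplices. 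Both are valid; your route is more self-contained and standard, while the paper's reuses machinery it needs anyway. Second, you prove the stronger statement that taking $\Sigma_{n+1}$-invariants commutes with homology (via the averaging idempotent), whereas the paper only uses that invariant homology injects into the full homology and then argues directly that an invariant cycle in the class of $a_0$ is homologous to its symmetrization; your version makes the surjectivity onto the degree $-1$ class automatic. Your remarks on the completion (that $\partial_1$ preserves bracket length, so $\widehat{\mathbb L}(V)$ is the product of the $\lib^k(V)$ as complexes and homology commutes with that product) correctly handle the only point where the completed setting could cause trouble.
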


\begin{proof}
This is clear for $n = 0$. In general $H(\widehat{\mathbb L}(\Delta^n)^{\Sigma_{n+1}}, \partial_1)$ injects into $H(\widehat{\mathbb L}(\Delta^n), \partial_1)$ $ = \mathbb L (a_0)$. Suppose $\alpha$ is a symmetric cycle whose image is in the same homology class than $a_0$, then $\alpha$ is homologous to the symmetrization $\overline{a_0}$ of $a_0$,
$$\overline{a_0} = \frac{1}{(n+1)!} \sum_{\sigma\in \Sigma_{n+1}} \sigma \cdot a_0.$$
 \end{proof}

 \begin{theoremb}\label{existenciasim} There exists a sequence  of symmetric models of $\mathbf\Delta$.
\end{theoremb}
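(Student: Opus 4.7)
I would proceed by induction on $n$, constructing at step $n$ a symmetric inductive model $(\lasu_n,\partial)$ that is compatible with the previously constructed $\lasu_m$ along the coface maps, and strengthening the inductive hypothesis so that each coface map $\delta_j\colon\lasu_{n-1}\to\lasu_n$ is equivariant with respect to the inclusion of $\Sigma_n$ into $\Sigma_{n+1}$ as the stabilizer of $j$. The base cases $n=0,1,2$ reduce to direct verifications: for $\lasu_1$, the $\Sigma_2$-equivariance of the Lawrence--Sullivan differential (\ref{equa:dxLSinterval}) is checked by rewriting it in the form (\ref{dxa}) and applying the involution $(a,b,x)\mapsto (b,a,-x)$; for $\lasu_2$, the expression $\partial_{a_0}a_{012}=a_{01}*a_{12}*a_{02}^{-1}$ in \propref{prop:eltriangulo} is $\Sigma_3$-equivariant after a short BCH manipulation using \propref{prop:expadx}.

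For the inductive step, first apply \thmref{existencia} to obtain an inductive model $(\lasu_n,\partial)$ whose faces realize the given symmetric $\lasu_{n-1}$. The $\Sigma_{n+1}$-action on generators sends $\delta_j(\lasu_{n-1})$ to $\delta_{\sigma(j)}(\lasu_{n-1})$, and its restriction to the stabilizer of $j$ recovers the inductive $\Sigma_n$-action on $\lasu_{n-1}$ by the strengthened hypothesis. Compatibility on the intersections $\delta_i\delta_k(\lasu_{n-2})$ follows by applying the cosimplicial identities and the strengthened hypothesis twice. Hence the restriction of $\partial$ to the boundary sub-DGL $\libc(\sfdelta^n)$ is already $\Sigma_{n+1}$-equivariant.

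It remains only to replace the top-cell differential by an alternating lift. Define
\[
\partial' a_{0\dots n}\;=\;\frac{1}{(n+1)!}\sum_{\sigma\in\Sigma_{n+1}}\varepsilon_\sigma\,\sigma\cdot(\partial a_{0\dots n}),
\]
and extend $\partial'$ as a derivation equal to $\partial$ on every other generator. The linear part $\sum_i(-1)^i a_{0\dots\widehat i\dots n}$ is itself alternating under $\Sigma_{n+1}$, so every summand contributes the same linear piece and $\partial' a_{0\dots n}$ has the prescribed linear component. Because the boundary differential is $\Sigma_{n+1}$-equivariant, each $\sigma\cdot(\partial a_{0\dots n})$ is a $\partial$-cycle in $\libc(\sfdelta^n)$, so $(\partial')^2=0$ on the top generator. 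By construction $\partial' a_{0\dots n}$ is alternating, which is exactly the equivariance condition on $a_{0\dots n}$, while equivariance on all other generators is inherited from $\libc(\sfdelta^n)$. Since the averaging touches only $\partial$ on $a_{0\dots n}$, which lies outside every coface image, the coface maps and their subgroup-equivariance are unaffected, and the strengthened hypothesis is preserved.

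The main obstacle is the compatibility statement used to transport the $\Sigma_{n+1}$-action across the different coface embeddings and to show that it commutes with $\partial$ on the whole of $\libc(\sfdelta^n)$. It is not enough to know that each $\lasu_m$ is symmetric in isolation; one must carry through the strengthened hypothesis so that the coface identifications of $\lasu_{n-1}$ sitting inside $\lasu_n$ agree on double faces. Once this bookkeeping is established via the cosimplicial identities, \lemref{lem2} ensures that there is no obstruction coming from invariant homology in positive degrees and the averaging argument closes the induction formally.
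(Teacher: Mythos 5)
Your overall architecture matches the paper's: induct on $n$, observe that the $\Sigma_{n+1}$-action on $\libc(\sfdelta^n)$ already commutes with $\partial$ because it permutes the coface images of the symmetric $\lasu_{n-1}$ (the paper dismisses this with ``by construction''), and then fix the differential on the top cell. The gap is in the top-cell step. First, a factual slip: for an inductive model it is the twisted differential $\partial_{a_0}a_{0\dots n}$, not $\partial a_{0\dots n}$, that lies in $\libc(\sfdelta^n)$; since $\partial a_{0\dots n}=\partial_{a_0}a_{0\dots n}-[a_0,a_{0\dots n}]$, the elements $\sigma\cdot(\partial a_{0\dots n})$ involve the top generator itself and are not cycles in the boundary, so the justification you give for $(\partial')^2=0$ does not apply. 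More seriously, the conclusion itself fails: squaring is quadratic in the differential, so an average of differentials that agree on a subalgebra need not square to zero. Setting $\partial^{(\sigma)}=\sigma\circ\partial\circ\sigma^{-1}$ and $N=(n+1)!$, your $\partial'$ is $\frac{1}{N}\sum_\sigma\partial^{(\sigma)}$ on the top cell, each $\partial^{(\sigma)}$ squares to zero and they all agree on $\libc(\sfdelta^n)$, yet a direct computation using $\partial^{(\sigma)}a_{0\dots n}=\varepsilon_\sigma\,\sigma\cdot(\partial_{a_0}a_{0\dots n})-[a_{\sigma(0)},a_{0\dots n}]$ gives
\begin{equation*}
(\partial')^2(a_{0\dots n})=\frac{1}{2N^2}\sum_{\sigma,\tau\in\Sigma_{n+1}}\bigl[a_{\sigma(0)}-a_{\tau(0)},\,\partial^{(\tau)}a_{0\dots n}-\partial^{(\sigma)}a_{0\dots n}\bigr],
\end{equation*}
which has no reason to vanish when the $\partial^{(\sigma)}a_{0\dots n}$ differ --- and they will differ for a generic output of \thmref{existencia}, since that construction makes an arbitrary choice of primitive $\Gamma$.

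The paper does not average a pre-existing differential; it builds $\partial a_{0\dots n}$ from scratch, bracket-length by bracket-length. Writing $\partial=\partial_1+\partial_2+\cdots$, the element $\sum_{i=2}^{q}\partial_i\partial_{q+1-i}a_{0\dots n}$ is a decomposable \emph{symmetric} $\partial_1$-cycle, and \lemref{lem2} --- which you cite only as a closing remark --- supplies a symmetric primitive $\omega_q$ at each stage, which is then declared to be $\partial_q a_{0\dots n}$. That vanishing of the invariant homology in non-negative degrees is precisely what absorbs the quadratic obstruction that your averaging leaves behind. Your argument could be repaired: $(\partial')^2$ is a degree $-2$ derivation supported on the top cell whose value is a symmetric decomposable cycle, so one can correct $\partial'$ order by order using \lemref{lem2} --- but that correction is not a formality; it is the entire content of the proof.
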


\begin{proof} The model of $\Delta^0$  is trivally symmetric.
Let $\sigma$ be the generator of $\Sigma_2$ and observe directly from (\ref{equa:dxLSinterval}) and (\ref{dxa}) that in the LS-interval, $\lasu_1=(\libc(a_0,a_1,a_{01}),\partial)$,
the morphism defined by  $\sigma (a_0)= a_1$, $\sigma (a_1)= a_0$ and $\sigma (a_{01})= -a_{01}$ commutes with $\partial$ .
We deduce then directly that   $\lasu_1$ is symmetric.

Assume $(\widehat{\mathbb L}(\Delta^{n-1}),\partial)$ to be symmetric with $n\ge 2$ and observe that, by construction $(\lasu(\dot\Delta^n), \partial)$ is a $\Sigma_{n+1}$-DGL. It remains only to define $d(a_{0\cdots n})$ in order  that $\sigma d(a_{0\cdots n}) = d\sigma (a_{0\cdots n})$ for all permutation $\sigma$.

Write $x= a_{0\cdots n}$ and $\partial = \partial_1+ \partial_2+\cdots$ where $\partial_q$ increases the length of the brackets by $q-1$.
By definition $\partial_1x = \sum_{i=0}^n (-1)^i a_{0\dots \widehat{i}\ldots n}$
  is symmetric, and we suppose by induction that, for all $r<q$, the elements $\partial_rx $ have been defined,
are symmetric and satisfy
$$\sum_{i+j=r+1} \partial_i\partial_jx = 0.$$
Then   $\sum_{i=2}^{q}\partial_i\partial_{q+1-i}x$ is a decomposable symmetric $\partial_1$-cycle.
By Lemma \ref{lem2} there exists a symmetric element $\omega_{q}$ such that
$$\sum_{i=2}^{q}\partial_i\partial_{q+1-i}x=\partial_{1}\omega_{q}.$$
We set $\partial_qx = \omega_q$ and check easily that the induction step is attained.
 \end{proof}

\begin{theoremb}\label{cosimplicial} Any   sequence $\{\lasu_n\}_{n\ge 0}$ of  models of $\mathbf\Delta$ admits a  cosimplicial DGL structure for which the cofaces are the usual ones.
\end{theoremb}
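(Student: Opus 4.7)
My strategy is in two stages: build a cosimplicial DGL structure on one specific sequence of models, and then transport it to an arbitrary sequence using the uniqueness isomorphisms $\varphi_n$ of \thmref{unicidadcompleta}. Transport is automatic since the $\varphi_n$ are DGL maps commuting with the cofaces, so if $\{\lasu_n\}$ admits codegeneracies $s_j$, then $s_j':=\varphi_{n-1}\circ s_j\circ\varphi_n^{-1}$ makes $\{\lasu_n'\}$ cosimplicial with the required cofaces. For the construction, on the generators of $\lasu_n$ I would define the codegeneracy by the normalized rule
\[
s_j(a_{i_0 \ldots i_p}) = \begin{cases} a_{s_j(i_0) \ldots s_j(i_p)} & \text{if the $s_j(i_k)$ are pairwise distinct,}\\ 0 & \text{otherwise,}\end{cases}
\]
and extend uniquely to a Lie algebra morphism by freeness. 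The cosimplicial identities for the $\delta_i$ and $s_j$ on the generators are inherited from $\Delta^\bullet$ as a cosimplicial simplicial set, hence extend to the full DGLs by the universal property of the free Lie algebra.

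The substantive task is to verify $s_j\partial=\partial s_j$ on each generator, by induction on $n$. When the index set $\{i_0,\ldots,i_p\}$ omits either $j$ or $j+1$, the map $s_j$ coincides on this face with the inverse of a coface, so the commutation with $\partial$ follows from cofaces being DGL morphisms together with the inductive hypothesis on lower-dimensional models. The harder case is when both $j$ and $j+1$ belong to the index set; then $s_j$ sends the generator to $0$ and we must prove $s_j(\partial a_{i_0 \ldots i_p})=0$. The only genuinely new obstruction at each step of the induction concerns the top generator $a_{0\ldots n}$, for which this condition reduces to requiring $s_j(\Gamma)=s_j(a_{0\ldots n-1})$ for every $j\in\{0,\ldots,n-1\}$, where $\Gamma\in\libc(\Lambda^n_n)$ is the element used in \thmref{existencia} to define $\partial_{a_0}a_{0\ldots n}$.

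The main obstacle is thus to refine the construction of \thmref{existencia} so that $\Gamma$ can be chosen simultaneously compatible with every codegeneracy. A direct computation on linear parts shows that the linear part of the candidate $\Gamma$ already hits the correct value under each $s_j$; the higher-order parts can then be corrected by adding $\partial_{a_0}$-boundaries, using three inputs: the inductive hypothesis that each $s_j$ commutes with $\partial_{a_0}$ on $\libc(\Lambda^n_n)$, the acyclicity of $(\libc(\Lambda^n_n),\partial_{a_0})$ from \corref{noseque}(i), and the freedom in $\Gamma$ modulo $\partial_{a_0}$-boundaries, which should be just enough to combine the corrections for distinct $j$ consistently. Once such a $\Gamma$ is produced, $s_j(\partial a_{0\ldots n})=0$ holds by design and the induction closes, yielding the desired cosimplicial structure.
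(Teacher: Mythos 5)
Your overall architecture (define $s_j$ on generators by the normalized rule, check commutation with $\partial$ by cases, then transport to an arbitrary sequence via the isomorphisms $\varphi_n$ of Theorem~\ref{unicidadcompleta}) matches the paper, and your treatment of the non-degenerate case and of the transport step is fine. The gap is in the degenerate case, which is the entire substance of the theorem. You correctly reduce it to choosing the element $\Gamma$ of Theorem~\ref{existencia} so that $s_j(\Gamma)=s_j(a_{0\dots n-1})$ for \emph{all} $j=0,\dots,n-1$ simultaneously, but you do not prove this can be done: the freedom in $\Gamma$ is a single correction term $\partial_{a_0}\xi$ with $\xi\in\libc(\Lambda^n_n)_{n-1}$, against which you must satisfy $n$ independent conditions. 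Writing $\omega_j=s_j(\Gamma)-s_j(a_{0\dots n-1})$, acyclicity gives you $\omega_j=\partial_{a_0}\eta_j$ for each $j$ separately, but you then need one $\xi$ with $\partial_{a_0}s_j(\xi)=\omega_j$ for every $j$ at once; correcting for one $j$ can destroy the condition for another, and "should be just enough" is an assertion, not an argument. As stated, the induction does not close.

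The paper circumvents this simultaneous lifting problem entirely by first proving the existence of \emph{symmetric} models (Theorem~\ref{existenciasim}): one arranges full $\Sigma_{n+1}$-equivariance of $\partial$ degree by degree, where the obstruction at each stage is a symmetric decomposable $\partial_1$-cycle and is killed using the acyclicity of the invariant subcomplex $\widehat{\mathbb L}(\Delta^n)^{\Sigma_{n+1}}$ in non-negative degrees (Lemma~\ref{lem2}). Once the model is symmetric, your hard case becomes a one-line sign argument: if the index set contains both $i$ and $i+1$, then with $\tau=(i,i+1)$ one has $\sigma_i\circ\tau=\sigma_i$, hence $\sigma_i\partial(a)=\sigma_i\tau\partial(a)=-\sigma_i\partial(a)=0=\partial\sigma_i(a)$. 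In effect, the symmetry packages your $n$ separate compatibility conditions into a single equivariance condition whose obstruction demonstrably vanishes. To repair your proof you would either have to carry out the simultaneous correction of $\Gamma$ (which is not obviously possible and for which you give no mechanism), or insert the symmetrization step as the paper does.
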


\begin{proof}   Assume first that the models $\lasu_{n}$ are symmetric. For $0\leq i\leq n$, denote as usual by $\sigma_i   \colon \{0,\dots, n+1\}\to \{0, \dots, n\}$ the morphism defined by
$$\sigma_i(j) = \begin{cases} \,\, j&\text{if $j\le i$},\\ j-1&\text{if $j> i$}.
\end{cases}
$$
Then,    we define a  DGL morphism, denoted in the same way,
$$
\sigma_i (a_{\ell_0\dots \ell_q}) = \left\{\begin{array}{ll}
a_{\sigma_i (\ell_0)\dots  \sigma_i (\ell_q)}   & \mbox{if } \sigma_i (\ell_0)<\dots <\sigma_i (\ell_q),
\\
\qquad 0 & \mbox{otherwise.}
\end{array}\right.
$$
When the elements $\sigma_i (\ell_j)$ are different, then $\sigma_i$ extends to an element of $\Sigma_{n+2}$ and $\partial \sigma_i (a_{\ell_0\dots \ell_q}) = \sigma_i \partial (a_{\ell_0\dots \ell_q}).$

In the case the sequence $\ell_0,\dots , \ell_q$ contains the elements $i$ and $i+1$, then denote by $\tau$ the permutation $\tau = (i, i+1)$. We have $\sigma_i\circ \tau = \sigma_i$. Thus,
$$\sigma_i \partial (a_{\ell_0\dots \ell_q}) = \sigma_i \tau \partial (a_{\ell_0\dots \ell_q}) = -\sigma_i \partial (a_{\ell_0\dots \ell_q}).$$
Therefore $\sigma_i\partial (a_{\ell_0\dots \ell_q}) = 0 = \partial \sigma_i(a_{\ell_0\dots \ell_q}),$
and thus  the $\sigma_i$'s are DGL morphisms. The cosimplicial identities are trivially satisfied.

  Now let $\{\lasu'_n\}_{n\ge 0}$ be another sequence of compatible models. By Theorem \ref{unicidadcompleta} we have compatible isomorphisms
  $$\varphi_n \colon \lasu_n' \stackrel{\cong}{\longrightarrow} \lasu_n.$$
We  define then the codegeneracies as
$\varphi_n^{-1}\sigma_i\varphi_n$.
Since the $\varphi_n$'s commute  with the cofaces, the cosimplicial identities are also satisfied in this case.
\end{proof}

%%%%%%%%%%%%%%%%%%%%%%%%%%%%
\section{\bf  The realization functor and its adjoint}\label{sec:larealizacion}

Based on a sequence $\lasu_\bullet$ of compatible models of $\mathbf\Delta$ with the cosimplicial structure given by Theorem \ref{cosimplicial}, we define  a pair of adjoint functors,
$$\xymatrix{ \catss& \catdgl \ar@<1ex>[l]^(.40){\langle\,\cdot\,\rangle}
\ar@<1ex>[l];[]^(.55){\lasu}\\}
$$
between the categories of simplicial sets and complete DGL's.
\begin{definition}\label{def:realization}
Let $L\in \catdgl$.
The \emph{realization} of  $L$ is   the simplicial set,
$$
\langle L\rangle_\bullet=\catdgl(\lasu_\bullet,L).
$$
\end{definition}

On the other hand, let $\underline{\mathbf\Delta}$ be the category whose objects are the sets $[n]=\{0,\dots,n\}$, $n\ge 0$, and whose morphisms are monotone maps. Now, let $I\colon \underline{\mathbf \Delta } \to \catss  $ the functor that associates to $[n]$ the simplicial set $\underline\Delta^n$ whose $p$-simplices are  the sequences $0\leq i_0\leq \cdots \leq i_p\leq n$. Observe that, by construction, $\lasu_\bullet$ is a functor from $\underline{\mathbf \Delta}$ to $\catdgl$.

\begin{definition}  The functor \emph{model} $\lasu\colon \catss\to \catdgl$ is defined as the    left Kan extension of $\lasu_{\bullet}$ along $I$,
$$\xymatrix{
\underline{\mathbf \Delta}\ar[r]^-{I}\ar[d]_{\lasu_{\bullet}}&
\catss\ar[dl]^-{\mbox{}\hspace{2mm}\lasu={\text{Lan}}_{I}\lasu_{\bullet}}\\
\catdgl
}$$
\end{definition}

\noindent The DGL $\lasu (K)$ is thus the colimit of $\lasu_\bullet$ over the comma category $I\downarrow K$,
$$\lasu (K)={\text{Lan}}_{I}\lasu_{\bullet}(K)= \varinjlim_{f\colon \underline\Delta^n \to K} \,\, \lasu_n.$$
For simplicity, we write
$$
\lasu(K)=\varinjlim_K\lasu_{\bullet}.
$$
and refer to it as the   $\lasu$-{\em model} of the simplicial set $K$.

\vspace{3mm}In the case $K$ is a finite simplicial complex, then $K\subset \Delta^n$ for some $n$, and $\lasu (K)$ is trivially isomorphic to the  complete sub DGL $(\widehat{\mathbb L}(V),\partial) \subset \lasu_n$ where $(V,\partial_1)$ is the desuspension of the chain complex of $K$.

\begin{theoremb} The functors $\lasu$ and $\langle\,\cdot\,\rangle$ are adjoint.
More precisely, for any simplicial set $K$ and any  complete differential graded Lie algebra $L$, there is a bijection,
$$\catss(K, \langle L\rangle) \cong \catdgl(\lasu(K), L).$$
\end{theoremb}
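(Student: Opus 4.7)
The plan is to string together the standard sequence of natural bijections that underlie any nerve-realization adjunction arising from a cosimplicial object in a cocomplete category. The key inputs are: the colimit formula $\lasu(K)=\varinjlim_{\underline\Delta^n\to K}\lasu_n$ provided by the very definition of $\lasu$ as a left Kan extension of $\lasu_\bullet$ along $I$; the fact that by Definition \ref{def:realization} the realization $\langle L\rangle$ is the simplicial set whose $n$-simplices are $\catdgl(\lasu_n,L)$; the Yoneda lemma for simplicial sets $\catss(\underline\Delta^n,X)\cong X_n$; and the density theorem, which presents any simplicial set $K$ as $K\cong\varinjlim_{\underline\Delta^n\to K}\underline\Delta^n$, the colimit being indexed by the category of simplices $I\downarrow K$.

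First, I would unwind the left-hand side. Since the contravariant functor $\catdgl(-,L)$ sends colimits to limits, the colimit formula for $\lasu(K)$ yields
\begin{equation*}
\catdgl(\lasu(K),L) \;\cong\; \varprojlim_{\underline\Delta^n\to K}\catdgl(\lasu_n,L) \;=\; \varprojlim_{\underline\Delta^n\to K}\langle L\rangle_n.
\end{equation*}
Next, I would replace each $\langle L\rangle_n$ by $\catss(\underline\Delta^n,\langle L\rangle)$ via Yoneda, and then commute the limit past the hom using the density theorem:
\begin{equation*}
\varprojlim_{\underline\Delta^n\to K}\catss(\underline\Delta^n,\langle L\rangle) \;\cong\; \catss\bigl(\varinjlim_{\underline\Delta^n\to K}\underline\Delta^n,\,\langle L\rangle\bigr) \;\cong\; \catss(K,\langle L\rangle).
\end{equation*}
Composing these two chains produces the desired natural bijection.

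The only point requiring genuine verification, rather than purely formal category-theoretic bookkeeping, is checking that all the bijections respect the simplicial/cosimplicial structure: namely, that the cosimplicial structure on $\lasu_\bullet$ established in Theorem \ref{cosimplicial} induces on $\langle L\rangle$ the simplicial structure whose face and degeneracy operators come from the cofaces and codegeneracies of $\lasu_\bullet$, so that the Yoneda bijection $\langle L\rangle_n\cong\catss(\underline\Delta^n,\langle L\rangle)$ is natural in $n$. I expect this compatibility to be the main technical point, though it is essentially automatic once one unwraps the definitions. Existence of the colimits defining $\lasu(K)$ poses no separate obstacle, since the category $\catdgl$ of complete DGLs is cocomplete, with colimits computed from the corresponding colimits of free complete graded Lie algebras together with the induced differentials.
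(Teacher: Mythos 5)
Your proposal is correct and follows essentially the same route as the paper: the paper's proof is exactly the chain $\catdgl(\lasu(K),L)\cong\varprojlim_K\catdgl(\lasu_n,L)=\varprojlim_K\langle L\rangle_n\cong\varprojlim_K\catss(\underline\Delta^n,\langle L\rangle)\cong\catss(K,\langle L\rangle)$, using hom--colimit commutation, Yoneda, and the density presentation of $K$.
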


\begin{proof}
The result follows from classical properties of commutation of  limits with hom functors, i.e.,
$$
\begin{aligned}
\catdgl(\lasu(K), L) &= \catdgl(\varinjlim_K\lasu_n, L) = \varprojlim_K\,\catdgl(\lasu_n, L)\\
 &=\varprojlim_K \langle L\rangle_n = \varprojlim_K\catss(\underline\Delta^n, \langle L\rangle)\\
&= \catss(\varinjlim_K\, \underline\Delta^n, \langle L\rangle) = \catss(K, \langle L\rangle).
\end{aligned}$$
\end{proof}

We now  interpret the homotopy groups  of the realization of a DGL and its path component.

\begin{proposition}\label{pi0}
For any DGL, $(L, \partial)$, there is a natural bijection $\pi_0\langle L\rangle \cong \widetilde{{\MC}}(L)$.
\end{proposition}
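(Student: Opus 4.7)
The plan is to unpack the realization in low degrees and match it directly with the gauge description of Maurer--Cartan elements.

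First I would identify $\langle L\rangle_0$ with $\MC(L)$. Since $\lasu_0=\widehat{\mathbb{L}}(a_0)$ with $a_0$ a Maurer--Cartan element, a DGL morphism $\lasu_0\to L$ is determined by the image of $a_0$, which must itself be a Maurer--Cartan element. The correspondence is obviously natural in $L$.

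Next I would describe $\langle L\rangle_1=\catdgl(\lasu_1,L)$ explicitly. Because $\lasu_1$ is the Lawrence--Sullivan interval, a morphism $f\colon\lasu_1\to L$ is the same as a triple $(\alpha,\beta,y)=(f(a_0),f(a_1),f(a_{01}))$ with $\alpha,\beta\in\MC(L)$, $y\in L_0$, and
$$\partial y = \ad_y(\beta)+\frac{\ad_y}{e^{\ad_y}-1}(\beta-\alpha).$$
Tracking the cofaces $\delta_0,\delta_1\colon\lasu_0\to\lasu_1$ shows that the two face maps $d_0,d_1\colon\langle L\rangle_1\to\langle L\rangle_0$ send $f$ to $\beta$ and $\alpha$ respectively.

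The key step, and the place where a short computation is required, is to show that the LS condition displayed above is equivalent to $\alpha = y\cG\beta$. From the defining formula $y\cG\beta=e^{\ad_y}(\beta)-\frac{e^{\ad_y}-1}{\ad_y}(\partial y)$, applying $\frac{\ad_y}{e^{\ad_y}-1}$ to the identity $\alpha = y\cG\beta$ gives
$$\partial y=\frac{\ad_y}{e^{\ad_y}-1}\bigl(e^{\ad_y}(\beta)-\alpha\bigr)=\frac{\ad_y\,e^{\ad_y}}{e^{\ad_y}-1}(\beta)-\frac{\ad_y}{e^{\ad_y}-1}(\alpha),$$
and the elementary identity $\frac{z e^z}{e^z-1}=z+\frac{z}{e^z-1}$ transforms the right-hand side into exactly $\ad_y(\beta)+\frac{\ad_y}{e^{\ad_y}-1}(\beta-\alpha)$. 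The steps are reversible, so the LS equation is equivalent to $\alpha=y\cG\beta$.

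To finish, I would observe that $\pi_0\langle L\rangle$ is, by definition, the quotient of $\langle L\rangle_0=\MC(L)$ by the equivalence relation generated by the image of $(d_0,d_1)\colon\langle L\rangle_1\to\langle L\rangle_0\times\langle L\rangle_0$. By the previous step this image consists precisely of the pairs $(\beta,\alpha)$ with $\alpha\in\mathcal G\cdot\beta$. Since gauge equivalence is already an equivalence relation on $\MC(L)$, the generated equivalence relation coincides with it, yielding the natural bijection $\pi_0\langle L\rangle\cong\widetilde{\MC}(L)$. Naturality in $L$ is built into every step. The only delicate point is the computation equating the LS differential condition with the gauge-action formula; once that is in place the argument is formal.
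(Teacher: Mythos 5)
Your proof is correct and follows essentially the same route as the paper: identify $\langle L\rangle_0$ with $\MC(L)$ and $\langle L\rangle_1$ with DGL maps out of the LS-interval, then observe that such a map connecting $\alpha$ and $\beta$ exists exactly when they are gauge equivalent. The only difference is that the paper cites this last equivalence from \cite[Proposition 3.1]{BM}, whereas you verify it directly via the identity $\frac{ze^z}{e^z-1}=z+\frac{z}{e^z-1}$ -- a computation that checks out and makes the argument self-contained.
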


 \begin{proof}
 By \cite[Proposition 3.1]{BM}, two Maurer-Cartan elements $z_0,z_1\in\mc(L)$ are  gauge equivalent if there is a map
 $\varphi\colon {\lasu}_1= (\widehat{\mathbb L}(a,b,x),\partial)\to L$
 with $\varphi (a) = z_0$ and $\varphi (b) = z_1$.
 By \defref{def:realization}, $\langle L\rangle_0$ is the set of Maurer-Cartan elements of $L$, and $\langle L\rangle_1$ is the set of DGL morphisms from the LS-interval $\lasu_1$ to   $L$.  This implies the result.
 \end{proof}

 \begin{proposition}\label{componentes} Let $(L,\partial)$ be a non negatively graded   DGL.
 Then,   $\langle L\rangle$ is a connected simplicial set  and there are natural group isomorphisms
$$
\pi_n\langle L\rangle\cong H_{n-1}(L,d),\qquad n\ge 1,
$$
in which $H_0(L,d)$ is considered with the group structure given by the Baker-Campbell-Hausdorff product.

\end{proposition}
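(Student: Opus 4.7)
The plan is to combine Proposition \ref{pi0} with a direct analysis of the based simplicial set $\langle L\rangle_\bullet$, exploiting the vanishing of $L$ in negative degrees. Since $L_{-1}=0$, we have $\MC(L)=\{0\}$, so by Proposition \ref{pi0} the set $\pi_0\langle L\rangle=\widetilde{\MC}(L)=\{0\}$ and $\langle L\rangle$ is connected, based at the constant $0$ morphism. A based $n$-simplex is then a DGL map $\varphi\colon\lasu_n\to L$ whose restriction to $\lasu(\dot\Delta^n)$ vanishes. Since such $\varphi$ must send each vertex $a_i$ to $0$, and $\partial a_{0\cdots n}=\partial_{a_0}a_{0\cdots n}-[a_0,a_{0\cdots n}]$ with $\partial_{a_0}a_{0\cdots n}\in\lasu(\dot\Delta^n)$ (inductive models, Theorem \ref{existencia}), the cocycle condition collapses to $\partial\varphi(a_{0\cdots n})=0$. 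Hence $\varphi\mapsto\varphi(a_{0\cdots n})$ identifies based $n$-simplices with $Z_{n-1}(L)$.

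Next I will verify that $\langle L\rangle$ is a Kan complex. Given a horn $\varphi\colon\lasu(\Lambda^{n+1}_k)\to L$, only the generators $a_{0\cdots\widehat k\cdots n+1}$ and $a_{0\cdots n+1}$ remain to be defined. Using the inductive formula $\partial_{a_0}a_{0\cdots n+1}=(-1)^{n+1}(a_{0\cdots n}-\Gamma')$ with $\Gamma'\in\lasu(\Lambda^{n+1}_{n+1})$, one solves the cocycle condition by setting $\varphi(a_{0\cdots n+1})=0$; the bracket $[a_0,-]$ drops out because $\varphi(a_0)=0$, and the relevant equation is then $\partial$-exact essentially by construction. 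Symmetric choice of model (Theorem \ref{existenciasim}) reduces other horns to this one by relabeling. With the Kan property in hand, the simplicial definition of $\pi_n\langle L\rangle$ is the naive one: quotient of based $n$-simplices by the relation of extending to a based $(n+1)$-simplex whose other faces are $0$.

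Computing this homotopy relation on $Z_{n-1}(L)$: a based $(n+1)$-simplex $\Psi$ with $d_0\Psi=\varphi_z$, $d_1\Psi=\varphi_{z'}$ and $d_i\Psi=0$ for $i\ge 2$ is determined by $w=\Psi(a_{0\cdots n+1})\in L_n$, and the DGL condition, using $\Psi(a_j)=0$ to kill all higher brackets in $\partial a_{0\cdots n+1}$, reduces to $\partial w=\pm(z-z')$. This yields a natural bijection $\pi_n\langle L\rangle \xrightarrow{\cong} H_{n-1}(L)$ by $[\varphi_z]\mapsto[z]$. For $n\ge 2$ both groups are abelian and the simplicial sum on $\pi_n$ is transparently additive in $z$, giving a group isomorphism.

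The main obstacle is the case $n=1$, where $\pi_1$ is a priori non-abelian. Here the composition of two loops $\varphi_{z_1},\varphi_{z_2}$ is computed, via a $2$-simplex filling the standard horn, by applying the subdivision morphism of Theorem \ref{prop:subdivision}: pulling back the universal edge $x$ along $\gamma\colon\lasu_1\to\lasu_1\coprod_{a_1}\lasu_1$ sends $x$ to $x_1*x_2$, so the concatenated loop corresponds to the element $z_1*z_2\in L_0$. A witnessing $2$-simplex for an identity $[w]=[z_1]\cdot[z_2]$ then produces, by the preceding differential computation, some $u\in L_1$ with $\partial u=z_1*z_2*w^{-1}$. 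Therefore $\pi_1\langle L\rangle\cong Z_0(L)/\!\sim$ where $z\sim z'$ iff $z*z'^{-1}\in\partial L_1$, and this is precisely $H_0(L)$ endowed with the BCH product.
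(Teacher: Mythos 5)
Your proposal is correct and follows essentially the same route as the paper: identify the based $n$-simplices with $Z_{n-1}(L)$ via $f\mapsto f(a_{0\dots n})$, check that the simplicial homotopy relation translates to $\partial w=\pm(z-z')$ because all lower-dimensional generators map to $0$, and use the triangle model $\partial_{a_0}a_{012}=a_{01}*a_{12}*a_{02}^{-1}$ (equivalently the subdivision morphism) to see that the product on $\pi_1$ is BCH. The only addition is your sketch of the Kan property, which the paper omits entirely since it works directly with the combinatorial formula for $\pi_n$; that extra step is welcome but does not change the argument.
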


\begin{proof} By \propref{pi0}, $\langle L\rangle$ is connected.
The coface maps, $\delta_{j}\colon \lasu_{n-1}\to \lasu_{n}$, induce the face maps
$$d_{i}=\catdgl(\delta_{j},L)\colon \langle L\rangle_{n}\to \langle L\rangle_{n-1}.$$
We denote
$\ker d_j=\{f \colon (\lasu_n, \partial) \to (L,\partial) \mid d_jf=0\}$.
 Recall that
$$
\pi_n \langle L\rangle=\cap_{i=0}^{n}\ker d_i/\sim
$$
where $f\sim g$ if  there is $h\in \langle L\rangle_{n+1}$ such that $d_nh=f$, $d_{n+1}h=g$ and $d_ih= 0$ for $i<n$. We denote by $\overline f$ the element of $\pi_n \langle L\rangle$ represented by $f$. Define,
$$
\varphi\colon \pi_n \langle L\rangle\stackrel\cong\longrightarrow H_{n-1}(L),\qquad \varphi(\overline f)=[f(a_{0\dots n})],
$$
and observe that, for any $\overline f\in\pi_n\langle L\rangle$, the  morphism $f$ vanishes in any  $p$-simplex of $\Delta^n$, with $0\le p< n$. Hence, it is uniquely determined by $f(a_{0\dots n})$.  Straightforward computations show that $\varphi$ is a well defined isomorphism for $n\ge 2$ and a bijection for $n=1$.

To check that for $n=1$ this bijection is in fact an isomorphism of groups choose $\alpha,\beta\in\pi_1\langle L\rangle$, and consider $h\in\langle L\rangle_2=\catdgl(\lasu_2,L)$ given by $$
h(a_{01})=g(a_{01}), \quad h(a_{12})=f(a_{01}),\quad h(a_{02})=f(a_{01})*g(a_{01}),\quad h(a_{012})=0,
$$
being $f,g\in \langle L\rangle_1=\catdgl(\lasu_1,L)$ representing $\alpha$ and $\beta$ respectively. Note that, since $L$ is non negatively graded the image of any $0$-simplex vanishes for every morphism in $\langle L\rangle$.

Now, in view of the model of $\Delta^2$ given in Proposition \ref{prop:eltriangulo}, $h$ is a well defined morphism for which $d_0h=f$ and $d_2 h=g$. Hence, by definition of the product in $\pi_1\langle L\rangle$, $\alpha\cdot\beta$ is represented by $d_1h$. Finally,
$$
\varphi(\alpha\cdot\beta)=d_1h(a_{01})=h\delta_1(a_{02})=h(a_{02})=f(a_{01})*g(a_{01})=\varphi(\alpha)*\varphi(\beta).
$$

\end{proof}

For any differential graded Lie algebra $L$ and any Maurer-Cartan element $z\in \MC(L)$ consider the {\em localization of $L$ at $z$} which is the DGL,
$$
L^{(z)}=(L,\partial_z)/(L_{<0}\oplus M)\cong L_{>0}\oplus(L\cap \ker \partial_z)
$$
where $M$ is a complement of $\ker\partial_z$ in $L_0$. Observe that  the injection of $(L^{(z)},d_z)\hookrightarrow (L,d_z)$ induces an isomorphism in homology in degrees $\geq 0$.

\begin{theoremb}\label{localizar} $\langle L\rangle\simeq \dot\cup_{z\in\widetilde\MC(L)} \langle L^{(z)}\rangle$.
\end{theoremb}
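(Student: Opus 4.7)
The strategy is to decompose $\langle L\rangle$ into its connected components and then identify each one with the realization of the corresponding localization. By Proposition~\ref{pi0}, $\pi_0\langle L\rangle\cong\widetilde{\MC}(L)$, so $\langle L\rangle$ is the disjoint union $\bigsqcup_{[z]}\langle L\rangle_{[z]}$ of its connected components, where $\langle L\rangle_{[z]}$ denotes the component containing the vertex $z$. The theorem thus reduces to producing, for each representative $z\in\MC(L)$, a weak equivalence $\langle L^{(z)}\rangle\simeq \langle L\rangle_{[z]}$.

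Fix $z\in\MC(L)$ and let $X_z\subset \langle L\rangle_{[z]}$ be the simplicial subset consisting of morphisms $f\colon \lasu_n\to L$ with $f(a_i)=z$ for every vertex generator $a_i$ of $\lasu_n$. Since the cosimplicial structure of $\lasu_\bullet$ preserves vertices, $X_z$ is indeed a simplicial subcomplex. The first task is to construct a simplicial isomorphism $X_z\cong \langle L^{(z)}\rangle$: given $f\in (X_z)_n$, associate to it the map $\psi\colon \lasu_n\to L^{(z)}\hookrightarrow (L,\partial_z)$ defined by $\psi(a_i)=0$ (forced since $L^{(z)}_{-1}=0$) and $\psi(v)=f(v)$ on all non-vertex generators $v$. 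Its morphism property $\partial_z\psi=\psi\partial$ reduces, for non-vertex $v$, to the algebraic identity
$$\psi(\partial v) - f(\partial v) = [z,f(v)],$$
where the two evaluations of $\partial v$ on the left differ only in whether vertex generators are substituted by $0$ or by $z$. This is a structural feature of $\lasu_n$, easily verified on the LS-interval and on the triangle of Proposition~\ref{prop:eltriangulo}, and it propagates to higher simplices by induction using the construction of Theorem~\ref{existencia} together with the symmetric action of Theorem~\ref{existenciasim}.

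The next task is to show that the inclusion $X_z\hookrightarrow \langle L\rangle_{[z]}$ is a weak equivalence. Both simplicial sets are connected: $X_z\cong\langle L^{(z)}\rangle$ by Proposition~\ref{componentes} applied to the non-negatively graded DGL $L^{(z)}$, and $\langle L\rangle_{[z]}$ by construction. For the higher homotopy groups, Proposition~\ref{componentes} yields $\pi_n\langle L^{(z)}\rangle\cong H_{n-1}(L^{(z)},\partial_z)\cong H_{n-1}(L,\partial_z)$ for $n\ge 1$, using that $L^{(z)}\hookrightarrow (L,\partial_z)$ is a quasi-isomorphism in non-negative degrees. On the other side, an analog of Proposition~\ref{componentes} with base point $z$ gives $\pi_n(\langle L\rangle,z)\cong H_{n-1}(L,\partial_z)$ for $n\ge 1$, with the BCH product when $n=1$; its proof mimics that of Proposition~\ref{componentes} but works with morphisms $\lasu_n\to L$ whose boundary is the constant simplex at $z$, so that the representing cycle now lives in $(L,\partial_z)$. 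A direct inspection of the definitions shows that the inclusion $X_z\hookrightarrow \langle L\rangle_{[z]}$ induces the identity under these identifications.

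The main obstacle is the verification of the algebraic identity underlying the simplicial isomorphism $X_z\cong \langle L^{(z)}\rangle$, which demands careful bookkeeping of the vertex dependence of $\partial v$ for every generator $v$ of $\lasu_n$. The essential content is already captured by Proposition~\ref{prop:dadb} for the LS-interval, but extending it to arbitrary simplices requires a full use of the inductive and symmetric structures developed in Sections~\ref{sec:delta}--\ref{sec:symmetric}.
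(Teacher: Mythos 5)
Your argument is correct and follows essentially the same route as the paper: decompose via $\pi_0\langle L\rangle\cong\widetilde\MC(L)$ (Proposition \ref{pi0}), replace the component of $z$ by the subcomplex of morphisms sending every vertex to $z$, and map it to $\langle L^{(z)}\rangle$ by killing the vertices, identifying $\pi_n$ with $H_{n-1}(L,\partial_z)$ on both sides. The identity you single out as the main obstacle needs no fresh induction: by Proposition \ref{prop:minimodel} one may take $\partial v=\partial_{a_0}v-[a_0,v]$ with $\partial_{a_0}v$ a Lie series in the elements $a_i-a_0$ and the higher generators, on all of which $f$ and $\psi(f)$ agree (both send $a_i-a_0$ to $0$), so $f(\partial v)-\psi(f)(\partial v)=-[z,f(v)]$ follows at once.
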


\begin{proof}

As we know, the components of $\langle L\rangle$ are identified with $\widetilde \MC(L)$. Via this identification,   the component of a given $z\in \widetilde\MC(L)$  is of the same homotopy type as the reduced simplicial set which we denote by
$
\langle L\rangle_z
$
whose $n$-simplices are the DGL morphisms $f\colon\lasu_n\to L$ such that $f(a_i)=z$ for any $0$-simplex $a_i$, $i=0,\dots,n$.

The simplicial set  $\langle L\rangle_z$ has only one $0$-simplex $\overline{z} \colon \lasu_0 \to L$ and its degeneracies are the maps $\overline{z} : \lasu_n \to L$ such that $\overline{z}(a_i)= z$ for all $i$ and which vanish on all generators of non-negative degrees.
 Observe that, for any $n\ge 1$,
$\pi_n(\langle L\rangle_z, \overline{z}) $ is the quotient space $E_n/\sim$, where $E_n$ denotes the set of DGL morphisms $f\colon \lasu_n\to L$ such that $d_if = \overline{z}$ for all $i$. When $f\in E_n$, $f(a_{0\dots n})$ is a $\partial_z$-cycle which defines an isomorphism
$\pi_n(\langle L\rangle_z, \overline{z}) \cong H_{n-1}(L, \partial_z)\,$
that is in turn induced by the simplicial set weak equivalence
 $$\psi\colon \langle L \rangle_z \stackrel{\simeq}{\longrightarrow}\langle L^{(z)}\rangle,\quad \psi(f) (a_i) = 0,\quad \psi (f)(a_{i_0\dots i_q}) = f(a_{i_0\dots i_q})\,\,\text{for $q>0$.}
  $$
\end{proof}

\section{The $\lasu$-model of $K$}

 We first prove that the homology of ${\lasu}(K)$ for the differential $\partial_a$ depends only on the component of $a$.

 \begin{proposition}\label{prop:component}
  Let $K$ be a finite simplicial complex and let $a$ be a $0$-simplex. Denote by  $K_a$  the component of $a$ in $K$ and denote also by $a$ the corresponding generator of degree $-1$ in $\lasu (K)$. Then, the injection $({\lasu}(K_a), \partial_a) \stackrel{\simeq}{\to} ({\lasu}(K),\partial_a)$ is a quasi-isomorphism.
 \end{proposition}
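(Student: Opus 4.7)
The plan is to exhibit the injection $i_*\colon\lasu(K_a)\hookrightarrow\lasu(K)$ as the section of a DGL retraction and then verify that the kernel of the retraction is acyclic with respect to $\partial_a$. Write $K = K_a\sqcup K'$ where $K'$ is the union of the remaining components. The simplicial retraction $r\colon K\to K_a$ which is the identity on $K_a$ and collapses every simplex of $K'$ to the appropriately degenerate simplex at $a$ induces, by functoriality of $\lasu$, a DGL retraction $r_*\colon\lasu(K)\to\lasu(K_a)$ with $r_*\circ i_* = \id$. Since both maps fix $a$, they intertwine $\partial_a$ on both sides, so it suffices to show that the differential ideal $\mathcal I := \ker r_*$ is $\partial_a$-acyclic.

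The central technical device is a change of basis. For each vertex $b \in K'_0$ set $\tilde b := b - a$. A direct computation gives $\partial_a\tilde b = -\tfrac{1}{2}[\tilde b,\tilde b]$, so each $\tilde b$ is Maurer--Cartan for $\partial_a$. By induction on simplex dimension, using the explicit inductive formulas from Proposition 2.7 and Example 2.11, one verifies that, with $V'_{K'} := \{\tilde b\}_{b\in K'_0}\cup\{\sigma\}_{\sigma\in K',\,\dim\sigma\geq 1}$, the complete sub-Lie algebra $\widehat{\mathbb L}(V'_{K'})$ is preserved by $\partial_a$ and is naturally isomorphic as a DGL to $(\lasu(K'),\partial)$. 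Consequently $(\lasu(K),\partial_a)$ decomposes as the $\catdgl$-coproduct of $(\lasu(K_a),\partial_a)$ with $(\lasu(K'),\partial)$, and $\mathcal I$ becomes the Lie ideal of this coproduct generated by $V'_{K'}$.

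The remaining task is to prove $\mathcal I$ is acyclic. The plan is to adjoin the generators of $V'_{K'}$ to $\lasu(K_a)$ in stages. First, pick a base vertex $b_0^{(i)}$ in each connected component $K_i'\subset K'$ and adjoin $\tilde b_0^{(i)}$: since each such $\tilde b_0^{(i)}$ is Maurer--Cartan for $\partial_a$ and the previously-adjoined sub-DGL trivially satisfies the hypothesis of Lemma 2.4, each adjunction produces a quasi-isomorphism. Second, after a further change of basis replacing each remaining vertex $b \in K_i'$ by $b - b_0^{(i)}$, adjoin all remaining generators of each component as a single block; the linear part of $\partial_a$ on this block coincides with the \emph{reduced} desuspended simplicial chain complex of $K_i'$, and Proposition 3.2 yields the required acyclicity whenever $K_i'$ is contractible.

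The main obstacle lies in handling components $K_i'$ with nonzero reduced homology, for which Proposition 3.2 does not apply directly at the block level. To resolve this I would filter $\mathcal I$ by $V'_{K'}$-weight and analyze the resulting spectral sequence: the term $E_0$ recovers a tensor-product-type complex built from $(\lasu(K_a),\partial_a)$ and the desuspended simplicial chains of $K'$, and the higher differentials, which incorporate $\ad_a$, should kill the spurious classes in $\tilde H_*(K_i')$ to yield $E_\infty = 0$. An alternative is to peel off the cycles of each non-contractible $K_i'$ one at a time by alternating applications of Lemma 2.4 (for Maurer--Cartan elements produced by iterated MC-shifts) and Proposition 3.2 (for blocks with acyclic linear part).
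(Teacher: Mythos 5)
Your opening moves are sound: the retraction obtained by collapsing $K'$ onto degenerate simplices at $a$ does reduce the statement to the acyclicity of its kernel; the elements $\tilde b=b-a$ are indeed Maurer--Cartan for $\partial_a$; and the identification of $(\widehat{\mathbb L}(V'_{K'}),\partial_a)$ with $(\lasu(K'),\partial)$ is essentially \propref{prop:minimodel} of the paper (note that this is not automatic for an arbitrary compatible model --- the differential has to be \emph{chosen} so that $\partial_{b_0}$ of the higher generators avoids the vertices, or one must invoke the uniqueness theorem). The genuine gap is exactly where you flag it: the acyclicity of the ideal when some component $K'_i$ has nontrivial reduced rational homology. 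Neither of your two proposed fixes is carried out, and the spectral-sequence description misidentifies the mechanism: after your change of basis the differential on the $K'$-block is the \emph{unperturbed} differential of $\lasu(K')$ and involves no $\ad_a$ at all, so there are no ``higher differentials incorporating $\ad_a$'' left to kill the classes of $\tilde H_*(K'_i)$. What kills them is the Maurer--Cartan vertex of $K'_i$ itself, and making that precise is the entire content of the proposition: your reduction converts the original claim into the equivalent claim $H(\lasu(K'_i),\partial)=0$, which is no easier.

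The missing idea --- the one the paper's proof turns on --- is this. The ideal generated by all generators other than $a$ (say $V\oplus W$, with $W$ the generators coming from $K'$) inside $\widehat{\mathbb L}(\mathbb Q a\oplus V\oplus W)$ is itself free on $\overline V\oplus\overline W$ with $\overline W=\{\ad_a^q(w)\}_{q\ge 0}$, and the linear part of the differential there satisfies $d_1\ad_a^q(w)=\pm\ad_a^{q+1}(w)\pm\ad_a^q(d_1w)$. The summand $\ad_a^{q+1}(w)$ exhibits $(\overline W,d_1)$ as the (bounded-in-each-degree) total complex of $(W,d_1)$ twisted by the acyclic column $\ad_a^q\mapsto\ad_a^{q+1}$, hence $H(\overline W,d_1)=0$ \emph{regardless of} $H(W,d_1)\cong s^{-1}H_*(K';\mathbb Q)$; \propref{auxi} then yields the quasi-isomorphism directly. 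No change of basis, no case distinction on the topology of $K'$, and no auxiliary spectral sequence beyond the bracket-length filtration already built into \propref{auxi} is needed. Without this (or an equivalent) argument your proof is incomplete precisely at its central step; with it, most of your preparatory reductions become unnecessary.
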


 \begin{proof}  Write $K = K_{a}\amalg K'$, ${\lasu}(K_a) = \widehat{\mathbb L}(V\oplus \mathbb Q a)$
 and ${\lasu}(K') = \widehat{\mathbb L}(W)$. Recall (see for instance \cite[Proposition 6.2.(7)]{tan}) that  the  ideal $I$ in $\mathbb L(V\oplus W\oplus \mathbb Q a)$ generated by $V\oplus W$  is the free DGL $ I= \mathbb L(\overline{V}\oplus \overline{W})$,  where
 $$\overline{V} = \{ \ad^n_a(v_i), n\geq 0, \quad \mbox{with $(v_{i})$   a basis of $V$}\},$$
  $$\overline{W} = \{ \ad^n_a(w_j), n\geq 0, \quad \mbox{with $(w_j)$   a basis of $W$}\}.$$

Since $V\oplus W$ is finite dimensional  and $a$ has degree $-1$, the kernel $J$ of the projection $(\widehat{\mathbb L}(V\oplus W\oplus \mathbb Q a), \partial_a) \to (\mathbb L(a), \partial_a)$ is the DGL $(\widehat{\mathbb L}(\overline{V}\oplus \overline{W}), \partial)$. Denote by $d_1$ the linear part of the differential $\partial$ in $J$.
We claim that  for $w\in W$ and $q$ even, $d_1 \ad_a^q(w)= \ad_a^{q+1}(w)+ (-1)^q \ad_a^q(d_1w)$.
Suppose this is true for $q-2$. Then, since $\ad_a^q(v) = \frac{1}{2}\bigl[[a,a], \ad_a^{q-2}(v)\bigr]$ we have
 $$
 \begin{aligned}d_1 \ad_a^q(v) &= \frac{1}{2} d_1 \bigl[[a,a], \ad_a^{q-2}(v)\bigr] = \frac{1}{2}\bigl[[a,a], \ad_a^{q-1}(v)+ (-1)^q \ad_a^{q-2}(d_1v)\bigr]\\
 &= \ad_a^{q+1}(v)+ (-1)^q\ad_a^q(d_1v).
 \end{aligned}$$

Therefore the homology $H(\overline{W}, \partial_1) = 0$
and the injection $(\widehat{\mathbb L}(\overline{V}), \partial_a)\stackrel{\simeq}{\to} (\lasu (K),\partial_a)$ is a quasi-isomorphism. This implies the result.

\end{proof}

 \begin{definition}\label{defi:modelos}
 Let $K$ be a simplicial set.
 A \emph{model of $K$} is a free complete differential Lie algebra $(\widehat{\mathbb L}(Z),\partial)$
 connected to $\lasu(K)$ by a sequence of quasi-isomorphisms.
 Such model is called \emph{minimal} if the linear part of the differential is null, i.e., $\partial_{1}=0$.
 \end{definition}

We precise the behaviour of some models in the connected case.

\medskip
 Let $K$ be a finite connected simplicial complex.
Then $(\lasu(K), \partial)= (\widehat{\mathbb L}(V), \partial)$,
 where $(V, \partial_1)$ is the desuspension of the chain complex of $K$.
 Let $a_0, \ldots , a_n$ be the $0$-simplices of $K$ viewed as generators  of $V_{-1}$.
 We write $W = \oplus_{q\geq -1} W_q$ with $W_q= V_q$ for $q\geq 0$ and $W_{-1}$
  the vector space generated by the elements $a_i-a_0$.

\begin{proposition}\label{prop:minimodel}
Let $K$ be a finite connected simplicial complex. With the previous notation, the differential
$\partial$ in $\lasu(K)$ can be chosen so that $\partial_{a_0}(W)\subset \widehat{\mathbb L}(W)$
and there is a quasi-isomorphism,
\begin{equation*}
(\lasu(K), \partial_{a_0}) \stackrel{\simeq}{\longrightarrow} (\widehat{\mathbb L}(W), \partial_{a_0}).
\end{equation*}
Moreover, if we denote by  $(\lasu(K)/(a_0), \overline{\partial})$  the quotient of $(\lasu(K), \partial)$
 by the ideal generated by $a_0$, then there is  an isomorphism,
\begin{equation*}
(\lasu(K)/(a_0), \overline{\partial} ) \stackrel{\cong}{\longrightarrow} (\widehat{\mathbb L}(W), \partial_{a_0}).
\end{equation*}
\end{proposition}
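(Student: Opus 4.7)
The plan is to combine the inductive construction of Theorem \ref{existencia} with a clever application of Lemma \ref{thel} to the sign-reversed differential $-\partial_{a_0}$, for which $a_0$ becomes a Maurer-Cartan element.

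First, I build a compatible sequence $\{\lasu_n\}$ of models of the standard simplices with the property $\partial_{a_0}(W_n)\subset\widehat{\mathbb L}(W_n)$ at each base vertex, then restrict via $K\hookrightarrow\Delta^N$. For $n\leq 2$ the property holds by direct inspection using the LS formula and the BCH-product description of Proposition \ref{prop:eltriangulo}: one computes $\partial_{a_0}(a_i-a_0)=-\frac{1}{2}[a_i-a_0,a_i-a_0]$ and checks that the $[a_0,\cdot]$ contribution cancels on edges $a_{ij}$. For $n\geq 3$, Theorem \ref{existencia} offers freedom to pick $\Gamma\in\widehat{\mathbb L}(\Lambda^n_n)$ with $\partial_{a_0}\Gamma=\partial_{a_0}a_{0\ldots n-1}$; I arrange $\Gamma$ to lie inside $\widehat{\mathbb L}(W\cap\Lambda^n_n)$ using the vanishing $H_\ast(\widehat{\mathbb L}(W\cap\Lambda^n_n),\partial_{a_0})=0$, which will follow from the next paragraph applied to $\lasu(\Lambda^n_n)$ (whose total $\partial_{a_0}$-homology vanishes by Corollary \ref{noseque}(i)) together with the good-model property already established on the faces of $\Lambda^n_n$.

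The decisive observation is that $a_0$ is a Maurer-Cartan element for $-\partial_{a_0}$:
\begin{equation*}
(-\partial_{a_0})(a_0)+\frac{1}{2}[a_0,a_0]=-\frac{1}{2}[a_0,a_0]+\frac{1}{2}[a_0,a_0]=0.
\end{equation*}
Consequently, the DGL $(\widehat{\mathbb L}(\mathbb Q a_0\oplus W),-\partial_{a_0})$ meets every hypothesis of Lemma \ref{thel}: $a_0$ is Maurer-Cartan, $\dim W<\infty$ since $K$ is finite, and $(-\partial_{a_0})(W)\subset\widehat{\mathbb L}(W)$ by the good-model property. Hence the inclusion $(\widehat{\mathbb L}(W),-\partial_{a_0})\hookrightarrow(\lasu(K),-\partial_{a_0})$ is a quasi-isomorphism; as a sign flip preserves homology, the retraction sending $a_0\mapsto 0$ and fixing $W$ yields the announced quasi-isomorphism $(\lasu(K),\partial_{a_0})\stackrel{\simeq}{\to}(\widehat{\mathbb L}(W),\partial_{a_0})$.

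For the quotient isomorphism, $\widehat{\mathbb L}(V)/(a_0)\cong\widehat{\mathbb L}(W)$ via the composite $\widehat{\mathbb L}(W)\hookrightarrow\widehat{\mathbb L}(V)\twoheadrightarrow\widehat{\mathbb L}(V)/(a_0)$, and the induced differential $\overline\partial$ equals $\partial_{a_0}$ on $\widehat{\mathbb L}(W)$ since, for $w\in W$, one has $\partial w=\partial_{a_0}w-[a_0,w]\equiv\partial_{a_0}w\pmod{(a_0)}$ with $\partial_{a_0}w\in\widehat{\mathbb L}(W)$. The delicate point is the circular-looking induction in the first paragraph: the construction of $\Gamma$ at dimension $n$ needs the quasi-isomorphism of the second paragraph applied to horns, but that in turn rests on the good-model property at dimensions $<n$, so the two steps must be run simultaneously along the dimension induction, with Lemma \ref{thel} providing the homological input at each stage.
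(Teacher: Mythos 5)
Your proof is correct and follows essentially the same route as the paper's: an induction over the simplices establishing $\partial_{a_0}(W)\subset\widehat{\mathbb L}(W)$ by exploiting the acyclicity statements of Corollary \ref{noseque} on the horn (the paper uses the boundary $\dot\Delta^n$ instead) and then Lemma \ref{thel} to split off the acyclic factor $\mathbb L(a_0)$, with the quotient isomorphism obtained by the same direct computation $\partial w\equiv\partial_{a_0}w\pmod{(a_0)}$. Your explicit observation that $a_0$ is Maurer--Cartan for $-\partial_{a_0}$ is a welcome clarification of the sign needed to invoke Lemma \ref{thel} literally --- a point the paper's proof passes over in silence --- but it does not change the substance of the argument.
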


\begin{proof}
We prove by induction on $n$ that $\partial_{a_0}(W_{n})\subset \widehat{\mathbb L}(W)$. By the naturality of the differential in $\lasu(K)$ we have only to prove it when $K = \Delta^{n+1}$.
 It is obvious for $n=-1$ and  directly deduced for $n=0$ from  the definition of the differential of the LS-interval.
Suppose that $\partial_{a_0}(W_q)\subset \widehat{\mathbb L}(W)$ for  $q<n-1$ and $n\geq 2$.
From the induction hypothesis, we get
$$(\widehat{\mathbb L}(\dot\Delta^n), \partial_{a_0}) = (\mathbb L(a_0), \partial_{a_0})\,\, \widehat{\amalg}\,\,  (\widehat{\mathbb L}(Z), \partial_{a_0}),$$
where $Z$ is the graded vector space generated by the elements $\{a_i-a_0\mid i>0\}$,
together with the generators of degree $\geq 0$. Here, $\widehat\amalg$ denotes the coproduct in the category of DGL's which is obtained by completing the non-complete coproduct. Since $H(\mathbb L (a_0), \partial_{a_0})= 0$,
the inclusion of the second factor is a quasi-isomorphism,
\begin{equation}\label{equa:lea0}
  (\widehat{\mathbb L}(Z), \partial_{a_0})\stackrel{\simeq}{\longrightarrow}
(\mathbb L(a_0), \partial_{a_0})\,\, \widehat{\amalg}\,\,  (\widehat{\mathbb L}(Z), \partial_{a_0}).
\end{equation}
On the other hand, in view of \corref{noseque},
$$H(\widehat{\mathbb L}(\dot\Delta^n), \partial_{a_0}) = \mathbb L(u),$$
where $\vert u\vert = n-2$.
With (\ref{equa:lea0}), we can choose $u\in \widehat{\mathbb L}(Z)$, and set $\partial_{a_0}(a_{0\dots n}) = u$.
Since $W= Z\oplus \mathbb Q a_{0\dots n}$, the inclusion $\partial_{a_0}(W)\subset \widehat{\mathbb L}(W)$ is established. Then,
it follows that
$$  (\lasu (K), \partial_{a_0})\stackrel{\simeq}{\longrightarrow}
(\mathbb L(a_0), \partial_{a_0})\,\, \widehat{\amalg}\,\,  (\widehat{\mathbb L}(W), \partial_{a_0}).$$
By Lemma \ref{thel}  this implies that $(\lasu (K), \partial_{a_0})$ is quasi-isomorphic to $(\widehat{\mathbb L}(W), \partial_{a_0})$. Moreover on the quotient $\lasu (K)/(a_0)$ the induced differentials $\overline{\partial}$ and $\overline{\partial_{a_0}}$ coincide. This implies the
  isomorphism of the statement.
\end{proof}

\begin{corollary}\label{cor:minimodel}
Let $K$ be a connected finite simplicial complex. Then,  $(\lasu(K), \partial_{a_0}) $ has a minimal model of the form
$  (\widehat{\mathbb L}(R), \partial)$, with $R= R_{\geq 0}\cong s^{-1}\widetilde{H}(K;\Q)$.
\end{corollary}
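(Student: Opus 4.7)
The plan is to invoke Proposition 6.5 to reduce the problem to constructing a minimal model of $(\widehat{\mathbb L}(W), \overline{\partial})$ rather than of $(\lasu(K), \partial_{a_0})$ directly. First I identify the chain complex $(W, \overline{\partial}_1)$: with $W_{-1}$ spanned by the $a_{i}-a_0$ for $i>0$ and $W_{\geq 0} = V_{\geq 0}$, this is precisely the desuspension of the reduced simplicial chain complex $\widetilde{C}_*(K;\Q)$, because sending $a_0\mapsto 0$ is exactly the reduction process. Hence $H(W, \overline{\partial}_1) \cong s^{-1}\widetilde{H}(K;\Q)$, and since $K$ is connected, $\widetilde H_{0}(K;\Q)=0$, so this graded vector space is concentrated in degrees $\geq 0$. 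Call it $R$.

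Next I build a free complete DGL $(\widehat{\mathbb L}(R),\partial)$ with $\partial_1=0$ together with a quasi-isomorphism $\varphi:(\widehat{\mathbb L}(R),\partial)\to (\widehat{\mathbb L}(W),\overline{\partial})$ by induction on the length filtration of brackets, in a manner analogous to (but dual to) the bigraded/minimal Sullivan model construction. The first step chooses a section $\varphi_1:R\to\ker\overline{\partial}_1\subset W$ representing the isomorphism $R\cong H(W,\overline{\partial}_1)$. Assuming that $\varphi$ and $\partial$ have been defined modulo $\widehat{\mathbb L}^{\geq n}(R)$ so that $\varphi\partial\equiv\overline{\partial}\varphi$ modulo brackets of length $\geq n$ and $\partial$ has trivial linear part, the obstruction $\overline{\partial}\varphi-\varphi\partial$ takes values that are $\overline{\partial}_1$-cycles lying in the \emph{decomposable} part of $\widehat{\mathbb L}(W)$. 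Choosing a decomposition $W=R\oplus Z\oplus\overline{\partial}_1 Z$ (possible since $W$ is finite-dimensional and $R$ represents its homology) gives a preferred contracting homotopy on the acyclic summand, which I use to write each such obstruction as $\overline{\partial}_1$ of a decomposable element, and then to define the next piece of $\partial$ on $R$ and adjust $\varphi$ accordingly.

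The output is a well-defined morphism $\varphi:(\widehat{\mathbb L}(R),\partial)\to (\widehat{\mathbb L}(W),\overline{\partial})$, and because it is a filtered morphism inducing an isomorphism on the associated graded ($R\xrightarrow{\cong} H(W,\overline{\partial}_1)$ in view of $\partial_1=0$), standard spectral-sequence or filtered-complex arguments imply it is a quasi-isomorphism. Combined with Proposition 6.5, this provides the required minimal model of $(\lasu(K),\partial_{a_0})$ with $R=R_{\geq 0}\cong s^{-1}\widetilde{H}(K;\Q)$.

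The main obstacle is the verification that the inductive procedure actually converges in the complete setting and that the resulting $\varphi$ is a quasi-isomorphism rather than just a morphism that is compatible with linear parts; however, the finite-dimensionality of $W$ (since $K$ is a finite complex) together with the completeness with respect to the bracket-length filtration ensures convergence, and the induced isomorphism on the $E_1$-page of the length-filtration spectral sequence together with its convergence in the complete setting gives the quasi-isomorphism.
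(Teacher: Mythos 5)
Your proposal is correct in substance but takes a genuinely different route from the paper's. The paper does not pass through Proposition~\ref{prop:minimodel} at all: it picks a maximal tree $T$ in the $1$-skeleton of $K$, notes that the ideal generated by the $a_i-a_0$ and by the edges of $T$ is acyclic, so that the quotient has the form $(\mathbb L(a_0),\partial_{a_0})\,\widehat{\amalg}\,(\widehat{\mathbb L}(R),\partial)$ with $R=R_{\geq 0}$, and then removes the remaining non-minimal pairs $(x,\partial_1x)$ one at a time, each quotient being a quasi-isomorphism by Proposition~\ref{auxi}. That route is short because all the homological work is already packaged in Proposition~\ref{auxi}, but it exhibits the minimal model only as the end of a chain of quotients. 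Your route --- reduce to $(\widehat{\mathbb L}(W),\partial_{a_0})$ via Proposition~\ref{prop:minimodel}, identify $(W,\partial_1)$ with $s^{-1}\widetilde{C}_*(K;\Q)$ (this identification is correct, and it makes the isomorphism $R\cong s^{-1}\widetilde H(K;\Q)$ transparent rather than implicit), and build $\varphi\colon(\widehat{\mathbb L}(R),\partial)\to(\widehat{\mathbb L}(W),\partial_{a_0})$ by induction on bracket length --- is the Lie-side analogue of the Sullivan minimal model construction, and it buys you a direct quasi-isomorphism out of the minimal model rather than a zig-zag. Three points in your sketch should be tightened. First, the length-$n$ obstruction is a decomposable $\partial_1$-cycle but is \emph{not} in general a $\partial_1$-boundary: its class in $H(\mathbb L^n(W),\partial_1)\cong\mathbb L^n(R)$ is precisely the new piece $\partial_n$ of the differential on $R$, and only the complementary exact part is absorbed into $\varphi$; as literally worded (``write each such obstruction as $\overline{\partial}_1$ of a decomposable element'') your construction would yield $\partial=0$ on $R$, which is false in general. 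Second, you must also check $\partial^2=0$ on $\widehat{\mathbb L}(R)$ at each stage (standard, using that $\mathbb L(\varphi_1)$ is injective on the associated graded). Third, since $W_{-1}\neq 0$ the bracket-length filtration is not finite in any fixed degree, so passing from the $E_1$-isomorphism to a quasi-isomorphism genuinely requires the completeness argument you invoke at the end --- which is in substance a rerun of the proof of Proposition~\ref{auxi}. None of these is an obstacle, and with those adjustments your proof goes through.
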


\begin{proof}
Denote by $T$ a maximal tree contained in the graph formed by the 0-simplices and the 1-simplices of $K$.
Since $K$ is connected, this tree contains all the vertices.
Moreover, in $\lasu(K)$, the ideal $\cI$, generated by the $\{a_i-a_0\mid  i>0\}$
together with  the generators of dimension 0 corresponding to the edges of $T$,  is acyclic.
Therefore, the quotient $\lasu(K)/\cI$ is quasi-isomorphic to a DGL of the form
$$(\mathbb L (a_0), \partial_{a_0}) \,\,  \widehat{\amalg}\,\, (\widehat{\mathbb L}(R), \partial),$$
with $R = R_{\geq 0}$.

Now, suppose that for some $x\in R$ we have $\partial_1x= y\neq 0$. Then, replace $y$ by $\partial x$, and by Proposition \ref{auxi}, taking the quotient by the ideal generated by $x$ and $y$ gives a quasi-isomorphism $(\widehat{\mathbb L}(R), \partial)\stackrel{\simeq}{\to} (\widehat{\mathbb L}(R)/(x,y), \overline{\partial})$. We continue in the same way and finally obtain a minimal complete free DGL quasi-isomorphic to $(\lasu (K), \partial_{a_0})$.

\end{proof}

\begin{example} If $K$ is a connected finite simplicial complex of dimension 1, then $(\lasu (K),\partial_a)\simeq (\widehat{\mathbb L}(V),0)$ with $V = s^{-1} {H}_1(K)$.
\end{example}

\begin{example}\label{exem:elnano}
The minimal model  of a finite simplicial set $K$ of the homotopy type of
$S^1\vee S^2$ is of the form
$(\widehat{\L}(a,b),0)$,
with $|a|=0$ and $|b|=1$.
In particular, $H(\lasu_{K})$
is the completion of $\L(a,b)$ for the adjoint action of $a$ on $b$.
\end{example}

%%%%%%%%%%%%%%%%%%%%%%%%%%%
\section{Differential graded Lie coalgebras and the Transfer Theorem}

In this, and in the next section, we show how the homotopy transfer theorem and the structure of free Lie coalgebras let us also associate a DGL to a finite simplicial complex $K$. The uniqueness theorem (Theorem 2.8) will then identify this DGL with our construction $\lasu(K)$. Following this approach we may understand the relation between the combinatorial properties of $\lasu(K)$ with the algebra of PL-forms on $K$.

\vspace{2mm}
Recall that a graded Lie coalgebra is a graded vector space, $V$, with a comultiplication $\Delta\colon V\to V\otimes V$ that satisfies two properties,
$$(1+\tau)\circ \Delta = 0\quad \mbox{and} \quad
(1+\sigma + \sigma^2)\circ (1\otimes \Delta)\circ \Delta= 0.$$ Here, $\tau {\colon} V\otimes V\to V\otimes V$ is the graded permutation and $\sigma{\colon} V\otimes V\otimes V\to V\otimes V\otimes V$ is the graded cyclic permutation \cite{Fr,Mi}.

Any graded coalgebra $(C, \Delta)$ admits a Lie coalgebra structure defined by $\Delta_L = \Delta-\tau \circ\Delta$.
If $T^c(V)$ denotes the tensor coalgebra on a graded vector space $V$, then $\Delta_L$ induces a Lie coalgebra structure
on the indecomposables for the shuffle product, i.e., on the quotient of $T^c(V)$ by the shuffle products.
This quotient is called the {\em free Lie coalgebra on $V$}  and is denoted by $\Lc(V)$.
We denote by {\bf DGLC} the category of  differential graded Lie coalgebras and by
{\bf CDGC} the category of differential connected  graded cocommutative coalgebras.
If $\mbox{\bf Vect}$ denotes the category of graded vector spaces, there is a natural isomorphism
$$\mbox{\bf Vect}(\Gamma, V) = \mbox{\bf DGLC} (\Gamma, \Lc(V))$$
when $V$ is graded vector space and $\Gamma$ a graded Lie coalgebra \cite[\S 4.2.1]{Fr}.
The graded universal coenveloping coalgebra of a graded Lie coalgebra
 $L$ is a coalgebra, denoted $U^cL$ with the property that for any graded coalgebra $C$, we have
 $$\mbox{\bf DGLC}(C, L) \cong \mbox{\bf CDGC} (C, U^cL).$$
 In particular, there is a coalgebra isomorphism,$U^c(\Lc(V)) \cong T^c(V)$.
If $(T^c(V),d)$ is a differential graded coalgebra such that, for every $n$,
the part $d_n\colon \otimes^n V\to V$ of the differential vanishes on the $(p,q)$-shuffles with $p+q=n$,
then $d$ makes of $(\Lc(V),d)$ a differential graded Lie coalgebra.

Clearly, the dual of a differential Lie coalgebra is a differential Lie algebra. Moreover, the following holds.

\begin{lemma}\label{gruno}
For any finite type vector space, $V$, we have
 $(\Lc(V))^{\sharp}= \widehat{\mathbb L}(V^{\sharp})$.
\end{lemma}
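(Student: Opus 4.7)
The plan is to exploit the duality between the presentation of $\Lc(V)$ as a quotient of the tensor coalgebra and the classical characterization of free Lie polynomials inside the tensor algebra. Concretely, by construction $\Lc(V)$ is the quotient of $T^c(V)$ by the image of the shuffle product, so I would first identify $(\Lc(V))^{\sharp}$ with the subspace of $(T^c(V))^{\sharp}$ annihilating all shuffles.

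Next, I would identify the ambient dual. Since $V$ is of finite type, the standard pairing gives an algebra isomorphism $(T^c(V))^{\sharp}\cong \widehat{T}(V^{\sharp})$: the product on the right, dual to the deconcatenation coproduct of $T^c(V)$, is simply concatenation, and the completion comes from the filtration by tensor length which in turn comes from the grading of $T^c(V)$. Transporting the shuffle product of $T^c(V)$ through this isomorphism yields a cocommutative coproduct $\Delta_{sh}$ on $\widehat T(V^{\sharp})$, with respect to which the annihilator of shuffles is precisely the space of primitive elements.

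The heart of the proof is then Ree's theorem: in characteristic zero, the primitive elements of the tensor bialgebra $(T(V^{\sharp}),\,\cdot\,,\Delta_{sh})$ are exactly the free Lie polynomials $\mathbb L(V^{\sharp})$. Passing to completions (which is compatible with the filtration, since $V$ is of finite type so each graded piece is finite dimensional and dualization commutes with the corresponding inverse limits) one obtains that the primitives of $\widehat T(V^{\sharp})$ coincide with $\widehat{\mathbb L}(V^{\sharp})$. Combining this with the preceding step yields the vector space identification $(\Lc(V))^{\sharp}\cong \widehat{\mathbb L}(V^{\sharp})$.

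Finally, I would verify that the Lie structures match: the Lie cobracket on $\Lc(V)$ is induced from $\Delta_L=\Delta-\tau\Delta$ on $T^c(V)$, and its dual on $\widehat{\mathbb L}(V^{\sharp})\subset\widehat T(V^{\sharp})$ is precisely the commutator bracket $[x,y]=xy-(-1)^{|x||y|}yx$, which is the bracket of $\widehat{\mathbb L}(V^{\sharp})$. I expect the only subtle step to be the completion argument: one must check that the finite type hypothesis lets Ree's theorem transfer to the completed setting, i.e., that a formal series in $\widehat T(V^{\sharp})$ annihilates every shuffle in $T^c(V)$ if and only if each of its homogeneous components is a Lie polynomial — which follows immediately since the shuffle product in $T^c(V)$ preserves tensor length and the pairing $T^c(V)\otimes \widehat T(V^{\sharp})\to\Q$ is evaluated componentwise.
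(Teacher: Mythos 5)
Your argument is correct, but it follows a different route from the paper's. The paper's proof is a two-line formal argument: it writes $\Lc(V)=\varinjlim_n(\Lc)^{\leq n}(V)$ as the colimit of its filtration by bracket length, uses that $\mathrm{Hom}(-,\Q)$ turns this colimit into an inverse limit, and then invokes the weight-by-weight duality $\bigl((\Lc)^{\leq n}(V)\bigr)^{\sharp}\cong \mathbb L(V^{\sharp})/\mathbb L^{>n}(V^{\sharp})$ (taken as known from the theory of free Lie coalgebras, cf.\ Fresse), so that the inverse limit is by definition $\widehat{\mathbb L}(V^{\sharp})$. You instead realize $(\Lc(V))^{\sharp}$ concretely as the annihilator of the proper shuffles inside $(T^c(V))^{\sharp}\cong\widehat T(V^{\sharp})$, identify that annihilator with the primitives for the unshuffle coproduct, and then invoke Ree's theorem (Friedrichs' criterion) to recognize the primitives as $\widehat{\mathbb L}(V^{\sharp})$, with the completion handled weight-by-weight since shuffles preserve tensor length. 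What your approach buys is that the key classical input is named explicitly rather than absorbed into a citation, and you additionally verify that the dual of the cobracket $\Delta-\tau\Delta$ is the commutator bracket, a point the paper leaves implicit; what the paper's approach buys is brevity and the fact that the completion appears for free as an inverse limit. One small caveat that applies to both arguments: the identification $(V^{\otimes n})^{\sharp}\cong(V^{\sharp})^{\otimes n}$ requires each weight--degree component to be finite dimensional, which is what the finite type hypothesis is for; you use it correctly but could state it once where the ambient dual is identified.
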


\begin{proof}
From $\Lc(V)= \varinjlim_n (\Lc)^{\leq n} (V)$, we deduce,
$$\begin{aligned}
 \mbox{Hom}(\,\displaystyle\varinjlim_n (\Lc)^{\leq n}(V), \mathbb Q) &=
 \displaystyle\varprojlim_n \,(\mbox{Hom}(\Lc)^{\leq n}(V), \mathbb Q)\\
 & = \displaystyle\varprojlim_n \,\mathbb L(V^{\sharp})/\mathbb L^{>n}(V^{\sharp} )
 = \widehat{\mathbb L}(V^{\sharp}).
 \end{aligned}$$
\end{proof}

Let $f {\colon} (T^c(V),d) \to (T^c(W),d)$ be a morphism of differential graded coalgebras. We denote by $d_1$ the linear part of the differentials, $d_1(V)\subset V$, $d_1(W)\subset W$, and we write $f = f_0+f_1+ \cdots$ with $f_i \,(T^c)^r(V)\subset (T^c)^{r-i}(W)$.

\begin{proposition}\label{qi} With the above notations, suppose $f_0 {\colon} (V,d_1)\stackrel{\simeq}{\to} (W,d_1)$ is a quasi-isomorphism. Then,
\begin{enumerate}
\item[(i)] $f$ is a quasi-isomorphism.
\item[(ii)] Moreover, if $f$ and the differentials $d$  vanish   on the decomposables for the shuffle product, then  the induced map $f {\colon} ({\mathbb L}^c(V),d)\stackrel{\simeq}{\to}({\mathbb L}^c(W),d)$ is also a quasi-isomorphism.
\end{enumerate}\end{proposition}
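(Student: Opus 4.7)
The plan for (i) is to introduce the increasing filtration by tensor (weight) degree $F_p T^c(V) = \bigoplus_{n\le p} V^{\otimes n}$, and similarly for $W$. The hypothesis on the decomposition of $f$, together with the analogous decomposition $d = d_1 + d_2 + \cdots$ of the coderivation into parts decreasing the tensor degree by $i-1$, ensures that both $d$ and $f$ respect this filtration. On the associated graded $\bigoplus_p V^{\otimes p}$, only the weight-preserving part $d_1$ survives; as a coderivation of the deconcatenation coalgebra determined by $d_1\colon V\to V$, the coLeibniz rule forces it to act on each $V^{\otimes p}$ as the standard Künneth (signed) tensor-product differential $\sum_i 1\otimes\cdots\otimes d_1\otimes\cdots\otimes 1$. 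The map induced by $f$ on the $p$-th graded piece is $f_0^{\otimes p}$.

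Since $f_0\colon(V,d_1)\to(W,d_1)$ is a quasi-isomorphism by hypothesis, the Künneth theorem over $\mathbb Q$ implies that each $f_0^{\otimes p}$ is a quasi-isomorphism. As the filtration is exhaustive and bounded below by $F_0=\mathbb Q$, the comparison theorem for the associated spectral sequences applies, or equivalently one argues by induction on $p$ using the five-lemma on the long exact sequences coming from $0\to F_{p-1}\to F_p\to V^{\otimes p}\to 0$, passing to the colimit at the end. This yields (i).

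For (ii), the hypothesis that $f$ and $d$ vanish on shuffle decomposables means that both descend to the quotient $\mathbb{L}^c(V) = T^c(V)/\mathrm{Sh}^+(V)$, producing a chain map $f\colon(\mathbb{L}^c(V),d)\to(\mathbb{L}^c(W),d)$. The weight filtration passes to this quotient, and since $\mathbb{L}^c(V)$ is already graded by weight, the associated graded is $\bigoplus_p \mathbb{L}^c_p(V)$ equipped with the differential induced by $d_1$. Each $\mathbb{L}^c_p$ is a Schur functor attached to the dual of the Lie operad component $\mathrm{Lie}(p)$; in characteristic zero, Schur functors are exact, so the Künneth argument goes through to give $H(\mathbb{L}^c_p(V),d_1)\cong \mathbb{L}^c_p(H(V,d_1))$ and the same for $W$. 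Hence $\mathbb{L}^c_p(f_0)$ is a quasi-isomorphism on each weight component, and the identical spectral sequence/induction argument as in (i) concludes.

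The main technical hurdle will be the careful identification of the differential on the associated graded in part (i) as the Künneth tensor differential, which rests on the uniqueness of coderivations extending $d_1\colon V\to V$ along the deconcatenation coproduct. Beyond that, (ii) reduces to verifying that the filtration descends cleanly to $\mathbb{L}^c$ and invoking exactness of Schur functors over $\mathbb Q$; convergence of the spectral sequence is immediate since the filtration is bounded below and each element has finite tensor length.
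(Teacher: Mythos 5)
Your proof is correct. Part (i) is essentially the paper's own argument: the paper filters $T^c(V)$ by tensor length, observes via K\"unneth that $f_0^{\otimes n}$ is a quasi-isomorphism on each graded piece, and runs the five lemma on the truncations $f_{\leq q}$ before handling the passage to the full complex (surjectivity directly, injectivity by enlarging $q$ to $r$) --- which is exactly your filtration/five-lemma/colimit argument in different clothing. For (ii) you take a mildly different route: you push the weight filtration down to the quotient $\Lc(V)$ and invoke exactness of the Schur functors $\Lc_p$ over $\mathbb Q$ to compute the associated graded, whereas the paper realizes $\Lc(V)$ as the image of the first Eulerian idempotent $e^{(1)}$ inside $T^c(V)$ and deduces the quasi-isomorphism as a natural retract of the one established in (i). These are two packagings of the same characteristic-zero fact --- $\Lc_p(V)$ is a natural direct summand of $V^{\otimes p}$ --- so neither buys extra generality; if anything, your version makes explicit why the full (nonlinear) differential and map descend to the quotient and why the associated-graded computation still identifies the $E_1$-page, a point the paper's ``we deduce in the same way'' leaves implicit.
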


\begin{proof}  (i) First of all, by the K\"unneth Theorem,
$$f_0^{\otimes n} {\colon} (T^n(V),d_1)\to (T^n(W),d_1)$$
is a quasi-isomorphism for each  $n$. Therefore, by the five lemma,
$$f_{\leq q}=f_0+\cdots +f_q {\colon} (T^{\leq q}(V),d) \to (T^{\leq q}(W),d)$$
is a quasi-isomorphism for all $q\geq 1$. This implies the surjectivity of $H(f)$. To prove the injectivity suppose $H(f)(\alpha)= 0$. Write $\alpha = [a]$, $a\in T^{\leq q}(V)$ and $f(a)= db$, $b\in T^{\leq r}(W)$, for some $r\geq q$. Then since $f_{\leq r}$ is a quasi-isomorphism, $\alpha = 0$.

(ii) Recall that ${\mathbb L}^c(V)$ is a retract of $T^c(V)$,   obtained by identifying $\Lc(sV)$ with the image of the first Eulerian idempotent $e^{(1)}$, i.e.,
see \cite[Theorem 4.2.3]{Fr},
$$\Lc(sV) = e^{(1)} T^c(sV).$$
Since this retraction is natural with respect to linear maps, it commutes with the differentials $d_1$ and with $f_0$. Therefore,
$${\mathbb L}^c(f_0) {\colon} ({\mathbb L}^c(V), d_1) \to ({\mathbb L}^c(W), d_1)$$ is a retract of $T^c(f_0)$ and is thus a quasi-isomorphism. We deduce in the same way that $f {\colon} ({\mathbb L}^c(V),d)\stackrel{\simeq}{\to}({\mathbb L}^c(W),d)$  is a quasi-isomorphism.
\end{proof}

\medskip
The Quillen correspondence ${\mathcal L}$ and ${\mathcal C}$
(generalized by Neisendorfer in \cite{nei}) between $\catdgl$ and
$\CDGC$ dualizes in a pair of adjoint functors between $\catdglco$
and the category $\catcdga$ of augmented commutative differential graded algebras,
that we can summarize in the following commutative diagram \cite[Theorem 4.17]{SW},
$$
\xymatrix{
\CDGC
\ar@/^/[rr]^{\mathcal L}
\ar[d]^{(-)^{\sharp}}
&&
 \catdgl\ar@/^/[ll]^{\mathcal C}\\
\catcdga
\ar@/_{}/[rr]_{\mathcal E}
&&
 \catdglco. \ar[u]_{(-)^{\sharp}}
  \ar@/_/[ll]_{\mathcal A}
 }$$
Moreover, the adjunction maps, $\cL\cC(L)\stackrel{\simeq}{\to} L$ and $C\stackrel{\simeq}{\to} \cC\cL(C)$,
are quasi-isomorphisms, see \cite[Proposition 4.1]{nei}.
In the same way, we have also quasi-isomorphisms,
$\cA\cE(A)\xrightarrow[]{\simeq} A$ and $E\xrightarrow[]{\simeq} \cE\cA(E)$.

The functor ${\mathcal A}$ is defined as follows. Let $(E,d)$ be a differential graded Lie coalgebra. Then, ${\mathcal A}(E)= (\land sE,D)$ with
$$D(sx) = \frac{1}{2} \sum_i (-1)^{\vert x_i\vert} \,\, sx_i\land sx_i' - sdx,$$
with $\Delta x= \sum_i x_i\otimes x_i'$. In particular, if $(\land Z,d)$ is the minimal model of ${\mathcal A}(E)$, then $Z\cong sH(E,d)$.

\vspace{3mm}

Recall now that an \emph{$A_\infty$-algebra structure} on a graded vector space $A$ consists in a sequence of linear maps $m_n {\colon} A^{\otimes n}\to A$, $n\geq 1$,  of degree $2-n$ such that for all $r,s,t\geq 1$
$$\sum_{r+s+t= n} (-1)^{r+st}   m_{r+t+1} (id^{\otimes r} \otimes m_s\otimes id^{\otimes t})= 0.$$
Now   to each $m_n$ is associated a canonical    map of degree $1$, $d_n {\colon} (sA)^{\otimes n} \to sA$. The set of $d_n$ induces a coderivation $d$ on the coalgebra $T^c(sA)$. It is well known that $A$ is an $A_\infty$-algebra if and only if $(T^c(sA),d)$ is a differential graded coalgebra. Moreover, in the case $A$ is a differential graded algebra, we recover  the bar construction on $A$, $(T^c(sA),d)$.

A \emph{morphism of $A_\infty$-algebras} $f{\colon} A\to B$ consists of a sequence of morphisms $f_n {\colon} A^{\otimes n}\to B$ such that the induced morphism
$F {\colon} (T^c(sA),d)\to (T^c(sB),d)$ is a morphism of differential graded coalgebras.

An $A_\infty$-algebra structure on $A$ is \emph{commutative} if the differential $d$ vanishes on the shuffle products in
${T^c}(sA)$.
In this case, $d$ induces  a differential on the quotient $\Lc(sA)$,
and we have a differential graded Lie coalgebra, denoted $(\Lc(sA),d)$.
When $f$ is a morphism between commutative $A_\infty$-algebras, that preserves the shuffle products,
then $f$ restricts to a morphism of Lie coalgebras, $(\Lc(sA), d) \to (\Lc(sA'), d')$.

A \emph{unital $A_\infty$-algebra} is an $A_\infty$-algebra endowed with an element $1_A$ of degree $0$ such that $m_1(1_A)= 0$, $m_2(1_A,a)= a=m_2(a, 1_A)$ for all $a\in A$ and such that for all $i>2$ an all $a_1, \dots , a_i\in A$, the product $m_i(a_1,\dots , a_i) $ vanishes if one of the $a_i$ equals $1_A$. A morphism of $A_\infty$-algebras, $f{\colon} A\to B$ is unital if $f_1(1_A)= 1_B$ and for $n>1$ $f_n(a_1, \dots , a_n)= 0$ if one of the $a_i$ equals $1_A$. A unital $A_\infty$-algebra is \emph{augmented} if it is endowed with a unital morphism $\varepsilon {\colon} A\to \mathbb Q$ such that $\varepsilon (1_A)= 1$. The augmentation ideal $\overline{A}= \mbox{ker}\, \varepsilon$ inherits then  an $A_\infty$-algebra structure.
A morphism of augmented $A_\infty$-algebras is a unital morphism $f {\colon} A\to B$ such that $\varepsilon_B\circ f= \varepsilon_A$.  It induces a morphism between the augmentation ideals.

In   particular, the functor ${\mathcal E}$ is extended to any augmented   $A_\infty$-algebra $A$ as the graded Lie coalgebra ${\mathcal E} (A)=(\Lc(s\overline{A}),d)$.

\medskip
In our setting, a {\em transfer CDGA diagram} is a diagram of the form
$$
\xymatrix{{\fracd}\colon& \ar@(ul,dl)@<-5.5ex>[]_\phi  & (A,d) \ar@<0.75ex>[r]^-p & (V,d) \ar@<0.75ex>[l]^-i }
$$
where $(A,d)$ is an augmented   CDGA, $(V,d)$ is an augmented (cochain) differential graded vector space, $p$ and $i$ are quasi-isomorphisms preserving the augmentations, $pi = \id_V$, and $\phi$ is a chain homotopy between $\id_A $ and $ip$, i.e., $\phi d+d\phi =\id_A- ip$, which satisfies  $\phi i=p\phi=\phi^2=0$.

\begin{theoremb}\label{thm:qicochain}
 The diagram ${\fracd}$ induces   quasi-isomorphisms of differential graded Lie coalgebras,
$$I {\colon} (\Lc(sV),D)\stackrel{\simeq}{\longrightarrow} (\Lc(sA),D),$$
where $I_1=si$ and $D_1=sd$ and
 $$I {\colon} (\Lc(s\overline{V}),D)\stackrel{\simeq}{\longrightarrow} (\Lc(s\overline{A}),D).$$
Moreover, since $(A,d)$ is commutative, the differential graded Lie coalgebra $(\Lc(s\overline{A}),D)$
 is quasi-isomorphic to ${\mathcal E}(A,d)$.
\end{theoremb}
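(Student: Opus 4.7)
The plan is to apply the classical homotopy transfer theorem for $A_\infty$-algebras to the contraction data $(i,p,\phi)$, use the commutativity of $A$ to descend from tensor coalgebras to free Lie coalgebras, and conclude with Proposition~\ref{qi}(ii).

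First, the standard tree-sum transfer formulas produce an $A_\infty$-algebra structure $\{m_n\}_{n\geq 1}$ on $V$ with $m_1=d$, together with an $A_\infty$-morphism $\{I_n\}_{n\geq 1}\colon V\to A$ whose first component is $I_1=i$. Dually, this assembles into a morphism of differential graded coalgebras $\tilde I\colon (T^c(sV),D_V)\to (T^c(sA),D_A)$ whose linear part is $si$ and whose differentials have linear parts $sd$. Commutativity of the strict CDGA $A$ makes the bar differential $D_A$ vanish on $(p,q)$-shuffles in $T^c(sA)$. Since each term in the tree-sum transfer formulas is built by interleaving the commutative product of $A$ with $i$, $p$, $\phi$, the same vanishing propagates: the transferred structure on $V$ is in fact a commutative $A_\infty$-algebra (in the sense recalled earlier in this section), and $\tilde I$ likewise vanishes on shuffles. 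This shuffle-compatibility of the transfer is the principal technical point; it can be checked either by invoking the general stability of commutative $A_\infty$-structures under homotopy transfer, or by direct inspection of the tree sums, in which commutativity of the product at every internal vertex forces the shuffle contributions to cancel.

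Using the retraction $\Lc(sV)\cong e^{(1)}T^c(sV)$ recalled in the proof of Proposition~\ref{qi}(ii), the vanishing on shuffles allows $D_V$, $D_A$ and $\tilde I$ to descend to a morphism
$$I\colon (\Lc(sV),D_V)\longrightarrow (\Lc(sA),D_A)$$
of differential graded Lie coalgebras with $I_1=si$ and $(D_V)_1=(D_A)_1=sd$. Since $i\colon (V,d)\to (A,d)$ is a quasi-isomorphism by hypothesis, Proposition~\ref{qi}(ii) applies and yields that $I$ is a quasi-isomorphism. Because $i$, $p$, $\phi$ and all the transferred operations preserve augmentations, the whole construction restricts to the augmentation ideals, producing the companion quasi-isomorphism $I\colon (\Lc(s\overline V),D)\xrightarrow{\simeq}(\Lc(s\overline A),D)$.

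Finally, for the identification with $\mathcal E(A,d)$: by the extension of $\mathcal E$ to augmented $A_\infty$-algebras recalled just before the statement, $\mathcal E(A,d)=(\Lc(s\overline A),D')$, where $D'$ is induced from the differential and product of $A$ and is well-defined on the quotient $\Lc(s\overline A)$ precisely because $A$ is commutative. Since $A$ is a strict DGA, the bar differential $D_A$ has only linear and quadratic parts, namely $sd$ and the dual of the product, which coincide with $D'$ on the nose; hence $(\Lc(s\overline A),D)=\mathcal E(A,d)$, yielding the asserted (here tautological) quasi-isomorphism. The genuine obstacle in the whole argument is the shuffle-vanishing of the transferred structure established in the second paragraph, which is where the commutativity hypothesis on $A$ is essentially used.
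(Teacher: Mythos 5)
Your argument follows the same route as the paper: apply the Kontsevich--Soibelman homotopy transfer to the contraction $(i,p,\phi)$ to get the $A_\infty$-structure on $V$ and the morphism $I$ with the stated linear parts, use commutativity of $A$ to make the transferred structure and the morphism vanish on shuffles so that everything descends to $\Lc$, and conclude with Proposition~\ref{qi}. You are in fact more explicit than the paper about the one delicate point---that shuffle-vanishing is preserved by the transfer, which is the content of the Cheng--Getzler reference \cite{CG}---whereas the paper leaves this implicit when it says the theorem ``follows directly'' from Proposition~\ref{qi}.
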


\begin{proof}
In \cite{KS}, Kontsevich and Soibelman proves that there is an augmented $A_\infty${-}structure on $V$, $(\ov{T}^c(sV),D)$,
and  a morphism of $A_\infty$-algebras, $I {\colon} ( {T}^c(sV),D) \to ( {T}^c(sA),D)$,
such that the linear part of $I$ and the linear part  of the differentials $D$ are the suspensions of the original ones.
Moreover, by construction, $I (s\overline{V}) \subset T^c(s\overline{A})$.
The theorem follows then directly from Proposition \ref{qi}.

\end{proof}

\section{\bf The cosimplicial structure via a transfer}\label{sec:transfer}

Let $K$ be a finite simplicial complex. We denote by $C^*(K)$   the dual of the chain complex   of
$K$, with rational coefficients, and by $A_{PL}(K)$ the algebra of rational PL-forms on  $K$, \cite{FHT,Su}.

\begin{theoremb}\label{quehago}{\em {\cite{Du1,Du2,getz}} } There is a CDGA  transfer diagram of the form,
$$
\xymatrix{
{\fracd}\colon
&
 \ar@(ul,dl)@<-5.5ex>[]_{\phi}
 &
  A_{PL}(K) \ar@<0.75ex>[r]^-{p}
  &
   {C^*(K).} \ar@<0.75ex>[l]^-{i} }
$$
In particular there is
 a simplicial CDGA transfer diagram,
$$
\xymatrix{
{\fracd_\bullet}\colon
&
 \ar@(ul,dl)@<-5.5ex>[]_{\phi_\bullet}
 &
  A_{PL}(\Delta^{_\bullet}) \ar@<0.75ex>[r]^-{p_\bullet}
  &
   {C^*(\Delta^\bullet),} \ar@<0.75ex>[l]^-{i_\bullet} }
$$
where $p_{\bullet}$, $i_{\bullet}$ and $\phi_{\bullet}$ preserve the simplicial structure.
\end{theoremb}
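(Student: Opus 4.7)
The construction is essentially Dupont's classical contraction, refined with the standard side-condition adjustments. The plan is to build the three maps by hand from Whitney elementary forms and fibre integration, verify the homotopy identity, enforce the side conditions by a standard perturbation, and finally argue that the construction can be made simplicial in the cosimplicial case.

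First I would define the three maps on the standard simplices $\Delta^n$ (the case of a general finite simplicial complex $K$ then follows by gluing). For $i_n\colon C^*(\Delta^n)\to A_{PL}(\Delta^n)$, send the cochain dual to a face $[i_0,\dots,i_k]$ to the Whitney elementary form
$$
w_{i_0\dots i_k}=k!\sum_{j=0}^k(-1)^j t_{i_j}\,dt_{i_0}\wedge\cdots\wedge\widehat{dt_{i_j}}\wedge\cdots\wedge dt_{i_k},
$$
written in barycentric coordinates. For $p_n\colon A_{PL}(\Delta^n)\to C^*(\Delta^n)$, define $p_n(\omega)([i_0,\dots,i_k])=\int_{[i_0,\dots,i_k]}\omega$. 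Both maps are manifestly compatible with cofaces and codegeneracies (pullback along a coface is integration over the corresponding subface, and Whitney forms pull back to Whitney forms under coface inclusions); they also preserve the augmentations. The identity $p_ni_n=\mathrm{id}$ is the classical computation $\int_{\sigma}w_{\tau}=\delta_{\sigma\tau}$ on oriented simplices.

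Second, I would produce a chain homotopy $\phi_n\colon A_{PL}(\Delta^n)\to A_{PL}(\Delta^n)$ with $\phi_n d+d\phi_n=\mathrm{id}-i_np_n$ by Dupont's formula. The idea is to use the affine contractions of $\Delta^n$ onto each vertex, form the associated integration-along-the-fibre operators $h_0,\dots,h_n$, and combine them alternately in a Chen-type iterated expression (the precise formula is an alternating sum of products $h_{i_0}(1-i_0p_0)h_{i_1}(1-i_1p_1)\cdots$ restricted to the relevant faces). This formula gives a well-defined operator on $A_{PL}(\Delta^n)$ satisfying the expected homotopy identity on each simplex, and its naturality with respect to cofaces is built in because a coface just restricts the contraction data to a subsimplex.

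Third, and this is where the subtlety lies, I would enforce the side conditions $\phi i=p\phi=\phi^2=0$ and the codegeneracy compatibility. The side conditions can be imposed by the usual trick from the perturbation lemma: replace $\phi$ by $\phi':=(\mathrm{id}-ip)\,\phi\,(\mathrm{id}-ip)$ to kill $\phi i$ and $p\phi$, and then by $-\phi' d\phi'$ (or the equivalent explicit formula found in Lambe–Stasheff or Kontsevich–Soibelman) to annihilate $(\phi')^2$; these modifications are natural in the CDGA $A$, hence preserve coface-compatibility. Passing from a single simplex $K$ to the cosimplicial diagram $\mathfrak{D}_\bullet$ requires that $\phi_\bullet$ also commute with the codegeneracies, which Dupont's original radial homotopy does \emph{not} satisfy. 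Here I would invoke the construction of Getzler, who modifies Dupont's formula to produce an explicit cosimplicial homotopy on $A_{PL}(\Delta^\bullet)$ compatible with both cofaces and codegeneracies; this modification still satisfies the homotopy identity, and re-applying the side-condition normalization above preserves simpliciality since those normalizations are polynomial expressions in $\phi$, $i$, $p$.

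The main obstacle is precisely the simultaneous realization of the side conditions together with the full cosimplicial compatibility: any homotopy obtained from a radial contraction is naturally coface-compatible but not codegeneracy-compatible, and reconciling the two is exactly the content of Getzler's refinement. Once that is in place, the non-simplicial statement for a general finite complex $K$ follows by restriction to $K\subset\Delta^N$ or by the simplicial/local-to-global gluing of $\phi_\bullet$.
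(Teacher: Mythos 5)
Your proposal is correct and follows essentially the route the paper itself relies on: the theorem is quoted from Dupont and Getzler, and the paper's own discussion consists precisely of the Whitney elementary forms for $i_\bullet$, the fibrewise integration formula for $p_\bullet$, and the remark that the general case of a finite complex $K\subset\Delta^N$ is obtained by restriction, all of which you reproduce. One small correction of attribution rather than of mathematics: Dupont's homotopy $s_\bullet$ is already compatible with both cofaces and codegeneracies, and what Getzler's \S 3 supplies is the verification of the side conditions ($s_\bullet i_\bullet=p_\bullet s_\bullet=s_\bullet^2=0$) for that operator, so your explicit side-condition normalization $(\mathrm{id}-ip)\phi(\mathrm{id}-ip)$ followed by $\phi' d\phi'$ is a harmless but redundant detour --- it does preserve simpliciality, as you argue, since it is a polynomial expression in simplicial operators.
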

The transfer diagram $\fracd_{\bullet}$ is explicitly described  for instance in \cite[\S 3]{getz}.
As usual,    $A_{PL}(  \Delta^n)=\Lambda(t_0,\dots,t_n,dt_0,\dots,dt_n)/(\sum t_i-1,\sum dt_i)$
and the maps $p_{\bullet}$ and $i_{\bullet}$ are defined as follows:

Let  $\alpha_{i_0\dots i_k}$ be the basis for $C^*(\Delta^n)$ defined by
$$\langle \alpha_{i_0\dots i_k}, a_{j_0\dots j_k}\rangle = \left\{\renewcommand{\arraystretch}{1.5}\begin{array}{ll}
(-1)^{\frac{k(k-1)}{2}} \, & \mbox{if }(j_0, \dots , j_k) = (i_0,\dots , i_k),\\
\qquad 0 & \mbox{otherwise.}
\end{array}\renewcommand{\arraystretch}{1}
\right.$$
Then,  $
 i_n(\alpha_{i_0\dots i_k}) $ is the Whitney elementary form
 $\omega_{i_0\dots i_k} $ defined by
 $$
 \omega_{i_0\dots i_k}  = k! \sum_{j=0}^k (-1)^j t_{i_j} dt_{i_0}\cdots \widehat{dt_{ij}}\cdots dt_{i_k}.$$
 Since $d(\omega_{i_0\dots i_k}) = \sum_q \omega_{qi_0\dots i_k}$, the map
 $i_n\colon C^*(\Delta^n)\to A_{PL}(\Delta^n)$ is a morphism of cochain complexes.
The map $p_{n}\colon A_{PL}(\Delta^n)\to C^*(\Delta^n)$ is defined by
$$p_n(\omega) = \sum_{k=0}^n \sum_{i_0<\dots < i_k} \alpha_{i_0\dots i_k} I_{i_0\dots i_k}(\omega),$$
with
$$I_{i_0\dots i_k}(t^{a_1}_{i_1}\dots t_{i_k}^{a_k} dt_{i_1}\dots dt_{i_k})= \frac{a_1!\cdots a_k!}{(a_1+\dots + a_k+k)!},$$
 and $0$ otherwise.
 In particular, $I_{i_0\dots i_k}( \omega_{i_0\dots i_k} )=1$.

Since the morphisms $p_\bullet$ and $i_\bullet$ preserve the simplicial structures they preserve sub simplicial complexes. So, if $K$ is a finite simplicial complex then $K\subset \Delta^n$ and the diagram ${\fracd}$ is obtained by restriction.

\begin{corollary}
For  a finite simplicial complex $K$, we have a quasi-isomorphism of differential graded Lie coalgebras,
$$I\colon (\Lc(sC^*(K)),d)\stackrel{\simeq}{\longrightarrow} (\Lc(sA_{PL}(K)),d).$$
\end{corollary}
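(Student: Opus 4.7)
The plan is to apply the transfer machinery of \thmref{thm:qicochain} directly to the CDGA transfer diagram $\fracd$ supplied by \thmref{quehago}. Set $A=A_{PL}(K)$ and $V=C^*(K)$; the pair $(p,i)$ consists of mutually inverse (up to the chain homotopy $\phi$) quasi-isomorphisms of augmented cochain complexes with $A$ a CDGA, which is precisely the hypothesis needed.

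First I would invoke the Kontsevich--Soibelman construction recalled in the proof of \thmref{thm:qicochain}: it transfers the commutative DGA structure on $A_{PL}(K)$ to a commutative $A_\infty$-structure on $C^*(K)$, together with an $A_\infty$-quasi-isomorphism $I\colon ({T}^c(sV),D)\to ({T}^c(sA),D)$ whose linear part is $si$ and whose linear piece of the differential is $sd$. Commutativity of the transferred structure means that $D$ vanishes on shuffle products, so passing to the quotient by shuffles yields the differential graded Lie coalgebras $(\Lc(sV),D)$ and $(\Lc(sA),D)$, and $I$ descends to a morphism between them.

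Next I would note that the quasi-isomorphism statement is exactly the content of \propref{qi}(ii): the linear part $si\colon (V,d)\to (A,d)$ is a quasi-isomorphism by construction of the transfer diagram, and both $I$ and the transferred differential vanish on shuffle products, so the hypotheses are satisfied. This gives
$$I\colon (\Lc(sC^*(K)),d)\stackrel{\simeq}{\longrightarrow} (\Lc(sA_{PL}(K)),d)$$
as a quasi-isomorphism of differential graded Lie coalgebras, as required.

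There is no serious obstacle here; the only point that deserves care is bookkeeping with the augmentations, since \thmref{thm:qicochain} asserts the analogous quasi-isomorphism on the augmentation ideals $(\Lc(s\overline{V}),D)\to (\Lc(s\overline{A}),D)$ while the corollary is phrased in terms of the full vector spaces $V$ and $A$. This causes no trouble because $p$ and $i$ respect augmentations, so the unit splits off on both sides and the full statement follows from the reduced one by adding the trivial summand. Thus the corollary is obtained as an essentially formal consequence of \thmref{quehago} combined with \thmref{thm:qicochain}.
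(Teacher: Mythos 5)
Your proposal is correct and follows exactly the route the paper intends: the corollary is an immediate specialization of \thmref{thm:qicochain} to the transfer diagram $\fracd$ of \thmref{quehago} with $A=A_{PL}(K)$ and $V=C^*(K)$. Your closing worry about augmentations is moot, since \thmref{thm:qicochain} already asserts the quasi-isomorphism for the full (unreduced) Lie coalgebras as well as for the augmentation ideals.
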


\vspace{3mm} Let $a_0$ be a base point. The associated augmentation of $A_{PL}(K)$ is defined by   $$
\varepsilon (t_0) = 1, \quad \varepsilon (t_i) = 0 \qquad\mbox{for } i\neq 0.$$
An augmentation in the cochain complex $C^*(K)$ is also defined by $\varepsilon (a_0)=1$ and $\varepsilon (a_i)= 0$ for $i>0$.
In the transfer diagram the morphisms $p_n$ and $i_n$ preserve the augmentations because
$p_n(t_0) = a_0$ and $i_n(a_0) = t_0$. Therefore, the morphism $I$ induces a quasi-isomorphism
$$(\Lc(s\overline{C}^*(K)),d)\stackrel{\simeq}{\longrightarrow} {\mathcal E}(A_{PL}(K)).$$

\begin{theoremb}\label{previo}
With the above notations,
\begin{enumerate}
\item[(i)] The dual of the simplicial differential graded Lie coalgebra
$\bigl(\Lc(s{C}^*(\Delta^{\bullet})),D\bigr)^{\sharp}$
is a sequence of compatible models of $\mathbf\Delta$.
\item[(ii)] The differential graded Lie algebras $(\widehat{\mathbb L}^c\bigl(sC^*(K)),d\bigr)^\sharp$ and  $\lasu (K)$ are isomorphic.
\end{enumerate}
\end{theoremb}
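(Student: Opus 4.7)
For part (i), the strategy is to apply Theorem~\ref{thm:qicochain} to each simplex of the simplicial CDGA transfer diagram $\fracd_\bullet$ of Theorem~\ref{quehago}. For each $n$, this yields a differential graded Lie coalgebra $(\Lc(sC^*(\Delta^n)),D)$ whose linear part $D_1$ comes from the cochain differential of $C^*(\Delta^n)$. Dualizing via Lemma~\ref{gruno} and the finite-dimensionality of $C^*(\Delta^n)$, we obtain a complete free DGL which, after the natural identification of $(sC^*(\Delta^n))^\sharp$ with the desuspended simplicial chain complex, reads $(\widehat{\mathbb L}(\Delta^n),\partial)$ with generators $a_{i_0\dots i_p}$ of degree $p-1$. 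I would then check the three conditions of Definition~\ref{modelo}: the linear part of $\partial$ equals the simplicial chain boundary~(\ref{simpdif}); the vertices $a_i$ are Maurer--Cartan; and the coface maps are DGL morphisms. The first is immediate by dualizing $D_1$. For the third, the simplicial naturality of $\fracd_\bullet$ means that $\delta_i^*$ commutes with $p_\bullet$, $i_\bullet$ and $\phi_\bullet$, hence is a morphism of the transferred $C_\infty$-structures; the induced DGLC morphism dualizes to the coface map~(\ref{coface}) on generators.

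\textbf{The Maurer--Cartan verification} is the main technical step I anticipate. Using Whitney forms $i(\alpha_i)=t_i$, a direct computation with the formula for $I_{i_0\dots i_k}$ yields $p(t_it_j)=\delta_{ij}\alpha_i$, and therefore $m_2(\alpha_i,\alpha_j)=\delta_{ij}\alpha_i$. Dualizing, the only quadratic contribution to $\partial a_i$ is $-\tfrac12[a_i,a_i]$. For the higher terms $\partial_k a_i$ with $k\geq 3$, I would invoke that $C^*(\Delta^n)$ is a $C_\infty$-algebra: each $m_k$ vanishes on $(p,q)$-shuffles with $p+q=k$ and $p,q\geq 1$, and $\alpha_i^{\otimes k}$ is a nonzero scalar multiple of the value of the $(1,k-1)$-shuffle map on $\alpha_i\otimes \alpha_i^{\otimes k-1}$, hence $m_k(\alpha_i,\dots,\alpha_i)=0$. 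Combined with $m_1(\alpha_i)=d\alpha_i=0$, this yields $\partial a_i+\tfrac12[a_i,a_i]=0$. The main obstacle I expect is bookkeeping Koszul signs carefully so that the coefficient $-\tfrac12$ in front of $[a_i,a_i]$ is correct, and confirming the absence of unexpected cross-contributions between vertex cochains.

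\textbf{Part (ii)} follows quickly from (i) via the uniqueness Theorem~\ref{unicidadcompleta}, which identifies the sequence constructed in (i) with the canonical sequence $\{\lasu_n\}$ of Section~\ref{sec:delta} up to isomorphism. For a finite simplicial complex $K\subset \Delta^n$, the restriction of cochains $C^*(\Delta^n)\twoheadrightarrow C^*(K)$ is a surjection of CDGAs compatible with the transfer, hence induces a surjection $\Lc(sC^*(\Delta^n))\twoheadrightarrow \Lc(sC^*(K))$ of differential graded Lie coalgebras. Dualizing gives an inclusion $(\Lc(sC^*(K)))^\sharp \hookrightarrow \lasu_n$ whose image is the sub-DGL freely generated by the simplices of $K$, which is by definition $\lasu(K)$.
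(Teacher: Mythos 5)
Your proposal follows the same architecture as the paper's proof: for (i) verify the three conditions of Definition~\ref{modelo} for the dual of the transferred Lie coalgebra (linear part by construction, cofaces by simplicial naturality of $\fracd_\bullet$, vertices Maurer--Cartan), and for (ii) use $K\subset\Delta^n$ together with the identification of $\lasu(K)$ as the sub-DGL generated by the simplices of $K$. The only real divergence is in the Maurer--Cartan check: the paper reduces by naturality to the unique vertex of $\Delta^0$, where $A_{PL}(\Delta^0)=\Q\cdot e=C^*(\Delta^0)$ and the transfer is trivial, so everything is read off the bar construction of $\Q e$ (namely $d[se\vert se]=-[se]$ and $\Delta_L[se\vert se]=2[se,se]$, giving $\partial a_0=-\tfrac12[a_0,a_0]$); your direct computation in $\Delta^n$ with Whitney forms is a legitimate, more explicit alternative, at the cost of the sign bookkeeping you anticipate. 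One genuine (but reparable) slip in that computation: since $s\alpha_i$ has odd degree, the $(1,k-1)$-shuffle of $s\alpha_i\otimes(s\alpha_i)^{\otimes (k-1)}$ equals $\bigl(\sum_{j=0}^{k-1}(-1)^j\bigr)(s\alpha_i)^{\otimes k}$, which vanishes when $k$ is even, so your stated justification for $m_k(\alpha_i,\dots,\alpha_i)=0$ only covers odd $k$. For even $k\ge 4$ use instead the $(2,k-2)$-shuffle of $(s\alpha_i)^{\otimes 2}\otimes(s\alpha_i)^{\otimes(k-2)}$, whose coefficient is the signed binomial $\binom{k}{2}_{-1}=k/2\neq 0$; note that for $k=2$ no nonzero shuffle exists, consistently with $m_2(\alpha_i,\alpha_i)=\alpha_i\neq 0$. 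With that repair the argument is complete and matches the paper's conclusion.
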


\begin{proof}
(i) Note first that, by construction, the linear part of the differential is the good one. Thus by  \thmref{unicidadcompleta},  we   need to check both that, for all $n\ge 0$,
each  vertex in $(\Lc(s{C}^*(\Delta^{\bullet})),D)^{\sharp}$ is a Maurer-Cartan element, and  that the cofaces are as in  (\ref{coface}).

On the one hand,
By naturality it is enough to check that the only vertex $a_0$ in $(\Lc(s{C}^*(\Delta^{0})),D)^{\sharp}$ is a Maurer-Cartan element.
Note that ${A}_{PL}(\Delta^0)= \mathbb Q \cdot e= {C}^*(\Delta^0)$, with $e^2=e$ the unit of the algebra.
Now in the bar construction \cite[Page 269]{FHT}, $d([se\vert se]) = -[se]$.
The anti-symmetrisation of the diagonal gives $\Delta_L([se\vert se]) = 2[se, se]$.
In the dual Lie algebra this gives $d(a_0)= -\frac{1}{2}[a_0,a_0]$.

On the other hand, a simple inspection shows that the faces on $C^*(\Delta^\bullet)$ become the usual cofaces when taking the dual of the free lie coalgebra.

(ii) This is a direct consequence of the fact that $K\subset \Delta^n$ for some $n$ and $\lasu(K)$ is isomorphic to a sub DGL $(\widehat\lib(V),\partial)\subset \lasu_n$ where $(V,\partial_1)$ is the suspension of the chain complex of $K$.
\end{proof}

\vspace{3mm} Next, we connect the functor $\lasu$ with the classical Quillen functor $\lambda\colon \catss_{1}\to \catdgl$   from the category of
$2$-reduced  simplicial sets \cite{qui} and with Sullivan minimal models \cite{Su}.

\begin{theoremb}\label{thm:quillenetlasu}
 Let $K$ be a  connected finite simplicial complex with Sullivan minimal model $(\land V,d)$.
Then:
\begin{enumerate}
\item[(i)] For any vertex $a$, $(\lasu(K),\partial_{a})$ is quasi-isomorphic to $\left({\mathcal E}(\land V,d)\right)^{\sharp}$.
\item[(ii)] In particular, if $K$ is 1-connected, then $(\lasu(K),\partial_{a})$ is quasi-isomorphic to $\lambda (K)$.
\end{enumerate}
 \end{theoremb}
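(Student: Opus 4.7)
The plan is to chain several quasi-isomorphisms together and identify both sides of (i) with a common intermediate differential graded Lie coalgebra. By $\thmref{previo}$(ii), there is an isomorphism of complete DGLs $\lasu(K) \cong (\Lc(sC^*(K)), d)^{\sharp}$, and the chosen vertex $a$ corresponds under this isomorphism to the generator dual to the $0$-cochain $\alpha$ with $\varepsilon(\alpha) = 1$. With $a$ as basepoint, $C^*(K)$ becomes an augmented commutative $A_\infty$-algebra via the transfer of $\thmref{quehago}$. The first step is to show that the twisting $\partial_a$ precisely corresponds to restricting to the augmentation ideal, giving a quasi-isomorphism
$$
(\lasu(K),\partial_a) \xrightarrow{\simeq} \mathcal{E}(C^*(K))^{\sharp}.
$$
Concretely, this uses $\propref{prop:minimodel}$ to replace $(\lasu(K),\partial_a)$ by $(\widehat{\mathbb L}(W),\partial_a)$ and identifies the underlying graded vector space $W$ with the desuspension of $(\overline{C^*(K)})^{\sharp}$, with differentials matching by construction.

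Next, $\thmref{thm:qicochain}$ applied to the CDGA transfer diagram of $\thmref{quehago}$ provides a quasi-isomorphism of differential graded Lie coalgebras
$$
\mathcal{E}(C^*(K)) \xrightarrow{\simeq} \mathcal{E}(A_{PL}(K)).
$$
Now fix a Sullivan minimal model $\varphi \colon (\land V, d) \xrightarrow{\simeq} A_{PL}(K)$, chosen to preserve the augmentation at $a$. Functoriality of $\mathcal{E}$ together with $\propref{qi}$(ii), applied to the induced morphism on free Lie coalgebras, yields
$$
\mathcal{E}(\land V, d) \xrightarrow{\simeq} \mathcal{E}(A_{PL}(K)).
$$
Composing these quasi-isomorphisms and dualizing, while using $\lemref{gruno}$ and the finite-type hypotheses together with completion to ensure that quasi-isomorphisms pass through $(-)^{\sharp}$, establishes part (i).

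For part (ii), when $K$ is $1$-connected the minimal Sullivan model satisfies $V = V^{\geq 2}$, and the commutative diagram of functors $\mathcal{L}, \mathcal{C}, \mathcal{A}, \mathcal{E}$ from $\secref{sec:transfer}$, combined with the classical Quillen--Neisendorfer equivalence, identifies $\mathcal{E}(\land V,d)^{\sharp}$ with $\lambda(K)$ up to quasi-isomorphism; concretely, $\mathcal{A} \circ \mathcal{E}$ recovers $(\land V,d)$ and the simply connected adjunction $(\mathcal{L}, \mathcal{C})$ is a Quillen equivalence, so the DGL dual of $\mathcal{E}(\land V,d)$ is quasi-isomorphic to the classical Quillen construction on $K$. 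The main difficulty lies in the first step: one must carefully verify that the identification of $\thmref{previo}$(ii) sends the Maurer--Cartan element $a \in \lasu(K)$ to the dual of the basepoint cochain, so that the twisted differential $\partial_a$ corresponds on the nose to the differential on $\mathcal{E}(C^*(K))^{\sharp}$ induced by the augmentation. Once this bookkeeping is in place, the remaining steps are essentially formal consequences of the transfer theorem and functoriality.
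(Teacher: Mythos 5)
Your proposal is correct and follows essentially the same route as the paper: for (i) the paper chains the quasi-isomorphisms $\cE(\land V,d)\stackrel{\simeq}{\to}\cE(A_{PL}(K))\stackrel{\simeq}{\leftarrow}(\Lc(s\ov{C}^*(K)),d)$ (coming from the Sullivan model and from \thmref{thm:qicochain} applied to the transfer diagram of \thmref{quehago}), dualizes, and identifies $(\Lc(s\ov{C}^*(K)),d)^\sharp$ with $(\lasu(K)/(a_0),\ov{\partial})\cong(\widehat{\mathbb L}(W),\partial_{a_0})\simeq(\lasu(K),\partial_{a_0})$ via \thmref{previo} and \propref{prop:minimodel} --- exactly the bookkeeping step you single out. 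The only place you are thinner than the paper is (ii): the identification $\cL\bigl((\land V,d)^{\sharp}\bigr)=\bigl(\cE(\land V,d)\bigr)^{\sharp}$ is formal, but passing from there to $\lambda(K)$ does not follow from the algebraic Quillen--Neisendorfer adjunction being an equivalence; the paper invokes Majewski's comparison theorem \cite{ma}, which supplies the actual zig-zag of quasi-isomorphisms between $\lambda(K)$ and $\cL\bigl((\land V,d)^{\sharp}\bigr)$ that your ``so'' stands in for.
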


\begin{proof}   (i)
We have a sequence of quasi-isomorphisms
$$\xymatrix@1{
\cE(\land V,d)\ar[r]^-{\simeq}&
\cE(A_{PL}(K))&
(\Lc(s\ov{C}^*(K)),d).\ar[l]_-{\simeq}
}$$
By taking duals we get a series of DGL quasi-isomorphisms
$$\xymatrix@1{
\cE(\land V,d)^\sharp &
\cE(A_{PL}(K))^\sharp \ar[l]_-{\simeq} \ar[r]^-{\simeq} &
(\Lc(s\ov{C}^*(K)),d)^\sharp.
}$$
Now remark that $(\Lc(s\ov{C}^*(K)),d)^\sharp= (\lasu (K)/(a_0), \overline{\partial})$ and recall from Proposition \ref{prop:minimodel} that $(\lasu (K)/(a_0), \overline{\partial})\cong (\lasu (K), \partial_{a_0})$.

(ii) When $K$ is 1-connected, the minimal model $(\land V,d)$ of $K$  is of finite type. Thus,  it is the dual of a differential graded coalgebra $(\land V,d)^\sharp$, and
$$\xymatrix@1{
\cL((\land V,d)^{\sharp})=(\cE(\land V,d))^{\sharp}.
}$$
The result is then  a direct
consequence of a theorem of Majewski \cite{ma} who connects $\lambda(K)$ with $\cL((\land V,d)^{\sharp})$
by a sequence of quasi-isomorphisms.
\end{proof}

%%%%%%%%%%%%%%%
\section{\bf Representability of the Quillen realization functor}\label{sec:representable}

We have proved that $\lasu(K)$ is homotopy equivalent to the Quillen construction
$\lambda(K)$, when $K$ is $2$-reduced of finite type.
We study now the connection between the realization functor $\langle-\rangle$ introduced in \secref{sec:larealizacion} with the composite of the  functors,
 $$
 \mathcal C^*= (-)^\sharp \circ {\mathcal C}\colon\catdgl_{f} \to\catcdga
 \quad
 \text{and}
 \quad\langle-\rangle_S\colon \catcdga\to\catss,
 $$
 where $\catdgl_{f}$ is the full subcategory of $\catdgl$ of finite type DGL.
 The first functor is the classical cochain functor defined on the category of  Lie algebras of finite type  and
 the second one is the Sullivan's realization functor defined by
 $\langle A\rangle_S= \catcdga\bigl(A,A_{PL}(\Delta^{\bullet})\bigr)$.

\begin{theoremb}\label{lestrella}
Let $(L,d)$ be a finite type DGL with $H_q(L,d)= 0$ for $q<0$. Then,  there is a homotopy equivalence of simplicial sets,
 $$\langle L\rangle \simeq \langle \mathcal C^*(L)\rangle_S.$$
\end{theoremb}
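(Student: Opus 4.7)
The strategy is to route through Getzler's realization $\langle L\rangle_G=\MC(L\,\widehat\otimes\,\mathscr A_\bullet)$: by \cite[Prop.~1.1]{getz}, under the stated hypothesis one has $\langle L\rangle_G=\langle \mathcal C^*(L)\rangle_S$, so it suffices to construct a natural weak equivalence of simplicial sets $\langle L\rangle\simeq\langle L\rangle_G$.

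For the comparison map I would use the transfer diagram $\fracd_\bullet$ of \thmref{quehago}. By \thmref{previo}(i) the complete DGL $\lasu_n$ is the dual of the DGLC $(\Lc(sC^*(\Delta^n)),D)$, and by \thmref{thm:qicochain} this DGLC is naturally quasi-isomorphic to $\mathcal E(A_{PL}(\Delta^n))=\Lc(s\bar A_{PL}(\Delta^n))$. This naturally suggests defining
$$
\Psi_n\colon \catdgl(\lasu_n,L)\longrightarrow \MC(L\,\widehat\otimes\,\mathscr A_n),\qquad \Psi_n(f)=\sum_{\sigma} f(a_\sigma)\otimes i_n(\alpha_\sigma),
$$
where $\sigma$ runs over the simplices of $\Delta^n$ and $i_n(\alpha_\sigma)$ is the Whitney form dual to $a_\sigma$. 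Because $p_\bullet,\, i_\bullet,\, \phi_\bullet$ are compatible with cofaces and codegeneracies, $\Psi_\bullet$ will be a morphism of simplicial sets.

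The main obstacle is verifying the Maurer-Cartan equation $d\Psi_n(f)+\tfrac12[\Psi_n(f),\Psi_n(f)]=0$ inside the honest DGL $L\,\widehat\otimes\,A_{PL}(\Delta^n)$. The map $i_n$ is only a chain map, not multiplicative, so products $i_n(\alpha_\sigma)\wedge i_n(\alpha_\tau)$ are not Whitney forms, and the correction terms must be absorbed exactly by the higher parts $\partial_k a_\sigma\in\mathbb L^k(V)$ of the differential of $\lasu_n$, which are dual to the transferred $C_\infty$-coalgebra structure on $\bar C_*(\Delta^n)$. Translating the DGL-morphism condition $f\circ\partial=d_L\circ f$ through the duality of \thmref{previo} should then produce exactly the MC equation for $\Psi_n(f)$; this is the heart of the argument and where the homological-perturbation content of $\fracd_\bullet$ must be used explicitly.

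Finally, to upgrade $\Psi_\bullet$ to a weak equivalence, I would compare homotopy groups. Both simplicial sets are connected: $\langle L\rangle$ by \propref{componentes}, and $\langle L\rangle_G$ by the standard gauge-action argument for non-negatively graded $L$. \propref{componentes} gives $\pi_n\langle L\rangle\cong H_{n-1}(L)$ (with BCH product on $\pi_1$), and the analogous formula for $\pi_n\langle L\rangle_G\simeq\pi_n\langle\mathcal C^*(L)\rangle_S$ is classical Sullivan theory under the finite-type hypothesis. To close the argument I would check $\Psi_*$ on representatives: a $d_L$-cycle $z\in H_{n-1}(L)$ is represented by the morphism $f\in\catdgl(\lasu_n,L)$ with $f(a_{0\ldots n})=z$ vanishing on lower-dimensional generators, and $\Psi_n(f)=z\otimes i_n(\alpha_{0\ldots n})$ modulo MC boundaries, so $\Psi_*$ becomes the identity $H_{n-1}(L)\to H_{n-1}(L)$.
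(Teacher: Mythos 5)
Your route is genuinely different from the paper's: the paper never passes through Getzler's $\MC(L\widehat{\otimes}\mathscr A_\bullet)$. Instead it rewrites $\langle L\rangle_n=\catdgl(\lasu_n,L)$ as $\mbox{\bf DGLC}(L^{\sharp},\lasu^c_n)$ using finite type and \lemref{gruno}, pushes forward along the quasi-isomorphism $I\colon\lasu^c_\bullet\to{\mathcal E}(A_{PL}(\Delta^\bullet))$ of \thmref{previo}, and then applies the functor ${\mathcal A}$ followed by the evaluation $\varepsilon_A\colon{\mathcal A}{\mathcal E}(A)\to A$ to land in $\mbox{\bf CDGA}({\mathcal C}^*(L),A_{PL}(\Delta^\bullet))=\langle{\mathcal C}^*(L)\rangle_S$; only the final homotopy-group comparison resembles what you sketch. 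The problem with your version is the gap you yourself flag. The well-definedness of $\Psi_n$ is equivalent to the assertion that the universal element $\varpi_n=\sum_\sigma a_\sigma\otimes\omega_\sigma$ is a Maurer--Cartan element of $\lasu_n\widehat{\otimes}A_{PL}(\Delta^n)$ (then $\Psi_n(f)=(f\widehat{\otimes}\id)(\varpi_n)$), and this is not a routine check: since $i_n$ is not multiplicative, the identity $d\varpi_n+\frac12[\varpi_n,\varpi_n]=0$ encodes the full compatibility between all the quadratic and higher parts $\partial_k$ of the differential of $\lasu_n$ (dually, the transferred $C_\infty$-structure on $C^*(\Delta^n)$) and the wedge product of PL forms, together with the side conditions on $\phi_n$. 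Writing that the duality ``should then produce exactly the MC equation'' leaves the entire analytic content of the theorem unproved; this is precisely the bookkeeping that the paper's coalgebra formalism (the statement that $I$ is a morphism of differential graded Lie \emph{coalgebras}, \thmref{thm:qicochain}) is designed to absorb, so you cannot borrow it for free while working at the level of elements.

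Two further points. First, the reduction $\langle L\rangle_G=\langle{\mathcal C}^*(L)\rangle_S$ via Getzler's Proposition 1.1 is available, as cited, only for \emph{non-negatively graded} finite type $L$, whereas the theorem assumes only $H_q(L,d)=0$ for $q<0$; bridging this would require invariance of both realizations under quasi-isomorphism, which is not established in this paper. Second, in your last step the identification $\pi_n\langle L\rangle_G\cong H_{n-1}(L)$ and the claim that $\Psi_n(f)$ equals $z\otimes i_n(\alpha_{0\dots n})$ ``modulo MC boundaries'' are asserted rather than argued; the paper instead identifies $\pi_n\langle{\mathcal C}^*(L)\rangle_S$ with $\mbox{Hom}(H_{n-1}(L),\Q)\to\Q\,\alpha_{0\dots n}$ and uses that the linear part of $\lasu^c_n\to{\mathcal E}A_{PL}(\Delta^n)$ is $i_n$, with $i_n(\alpha_{0\dots n})=n!\,dt_1\cdots dt_n$. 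As it stands, your proposal is a plausible strategy (and one that can be carried out with the Dupont contraction made explicit), but the central verification is missing.
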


\begin{proof}
Given the simplicial  differential graded Lie coalgebra $\lasu^c_\bullet = \bigl(\Lc(s{C}^*(\Delta^\bullet)),D\bigr)$
of \propref{previo}
 and $L\in \catdgl_{f}$, we define   the simplicial set
$$\langle L\rangle'_\bullet := \mbox{\bf DGLC}(L^{\sharp}, \lasu^c_\bullet).$$

A coalgebra morphism   $L^\sharp \to \lasu_n^c$ is completely determined by its projection on the indecomposables $L^\sharp \to sC^*(\Delta^n)$ and an algebra morphism $\lasu_n\to L$ by its restriction $s^{-1}C_*(\Delta^n) \to L$. Since $L$ is finite type, $\langle L\rangle'_n = \langle L\rangle_n$.
Also, as taking dual is compatible with the simplicial structure, the dual process induces a bijection of simplicial sets $(-)^\sharp
 {\colon} \langle L\rangle '_\bullet\to \langle L\rangle_\bullet$.

The quasi-isomorphism $I  \colon \lasu^c_\bullet\to {\mathcal E}(A_{PL}(\Delta^\bullet))$
of \propref{previo} induces a morphism of simplicial sets
$$\Psi\colon \mbox{\bf DGLC}(L^{\sharp}, \lasu^c_\bullet) \to \mbox{\bf DGLC} (L^{\sharp}, {\mathcal E}A_{PL}(\Delta^\bullet)).$$
On the other hand the functor $\cA$ is equipped with a natural evaluation morphism,
$$\varepsilon_A\colon {\mathcal A}{\mathcal E }(A,d)= (\land s\mathbb L^c(s\overline{A}), \partial) \to (A,d),$$
which vanishes  on $(\mathbb L^c)^{\geq 2}(\overline{A})$.
We then obtain a sequence of simplicial maps,
$$\xymatrix{
\mbox{\bf DGLC}(L^{\sharp},\lasu^c_{\bullet})\ar[r]^-{\Psi}&
\mbox{\bf DGLC}(L^{\sharp}, {\mathcal E}A_{PL}(\Delta^\bullet))\ar[d]^{\mathcal A} &
\\
&
\mbox{\bf CDGA} ({\mathcal A} (L^{\sharp}), {\mathcal A}{\mathcal E}A_{PL}(\Delta^\bullet))\ar[d]^{\varepsilon_*}&
\\
&
\mbox{\bf CDGA}({\mathcal C}^*(L), A_{PL}(\Delta^\bullet))=  \langle {\mathcal C}^* (L)\rangle_S.&
}$$
Recall now that in the transfer, $i_n(\alpha_{0\dots n})= \omega_{0\dots n} = n!\,dt_1\cdots dt_n$. Therefore, the linear part of $\lasu_n^c\to {\mathcal E}A_{PL}(\Delta^n)$ is $i_n$.
As $\pi_n\langle {\mathcal C}^*(L)\rangle$ can be identified with the linear maps $\mbox{Hom}(H_{n-1}(L),\mathbb Q)\to \mathbb Q \,\alpha_{0\dots n}$ (see \cite[\S 1.7]{FHTII}), this composition induces an isomorphism on the homotopy groups.
\end{proof}

%\begin{corollary}\label{ctruc} Let $(L,d)$ be a finite dimensional DGL with $H_q(L,d)= 0$ for $q<0$. Then $\langle L\rangle$ is a rational nilpotent %space and there are group isomorphisms
%$$\pi_n\langle L\rangle \cong H_{n-1}(L,d),$$
%where for $n=1$ the group structure on the right is given by the Baker-Campbell-Hausdorff product.
%\end{corollary}

%\begin{proof} By Theorem \ref{lestrella}, we have $\pi_n\langle L\rangle = \pi_n \langle \mathcal C^*(L)\rangle_S$. The result now follows from %\cite[Theorem 2.4]{FHTII}.\end{proof}

Recall now that a group $G$ is  \emph{$\mathbb Q$-complete} if each $G^n/G^{n+1}$ is a $\mathbb Q$-vector space and $G= \varprojlim_n G/G^n$. Here $G= G^1\supset G^2\supset\cdots$ is the lower central series of $G$. That is, $G^2$ is the subgroup generated by the commutators $[a,b]$ and $G^n$ is the subgroup generated by the iterated commutators $[g_1,[g_2,[\cdots [g_{n-1},g_n]\cdots ]$. A nilpotent space $X$ is called a rational nilpotent space if its fundamental group is $\mathbb Q$-complete, if each $\pi_n(X)$ is a finite dimensional $\mathbb Q$-vector space for $n\geq 2$, and if $\pi_1(X)$ acts nilpotently on  $\pi_n(X)$ for $n\geq 2$.

\begin{proposition}  Let $K$ be a finite connected simplicial complex. Then, the simplicial set $\langle \lasu (K),\partial_a\rangle$ is the projective limit of a tower of rational nilpotent spaces.
\end{proposition}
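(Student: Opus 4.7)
The plan is to exhibit an inverse system of finite-dimensional nilpotent DGL quotients of $\lasu(K)$ whose realizations form the desired tower. Since $\lasu(K) = \widehat{\mathbb L}(V)$ with $V$ the desuspension of the chain complex of $K$, a finite dimensional graded vector space, the descending chain of differential ideals $\lasu(K)^{\ge n} := \widehat{\mathbb L}^{\ge n}(V)$—which is preserved by both $\partial$ and $\ad_a$—yields a tower of quotient DGL's
\[
L_n := \lasu(K)\bigl/\lasu(K)^{\ge n+1},
\]
each of which is nilpotent of class at most $n$ and finite dimensional. By completeness one has $(\lasu(K),\partial_a) = \varprojlim_n (L_n,\partial_a)$.

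Next I would invoke \thmref{adjoint}: since $\langle\,\cdot\,\rangle$ is the right adjoint of $\lasu$, it commutes with inverse limits, whence
\[
\langle \lasu(K),\partial_a\rangle \;\cong\; \varprojlim_n \langle L_n,\partial_a\rangle.
\]
This gives a tower of simplicial sets converging to the target, and it remains to identify each stage as a rational nilpotent space.

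For this final step I would apply \thmref{localizar} to decompose $\langle L_n,\partial_a\rangle$ as the disjoint union of its path components $\langle L_n^{(z)}\rangle$, indexed by the gauge classes of Maurer-Cartan elements of $(L_n,\partial_a)$. Each localization $L_n^{(z)}$ is a non-negatively graded, finite type, nilpotent DGL, so \thmref{lestrella} gives a homotopy equivalence $\langle L_n^{(z)}\rangle \simeq \langle \mathcal C^*(L_n^{(z)})\rangle_S$. The cochain algebra $\mathcal C^*(L_n^{(z)})$ is a nilpotent finite-type CDGA, whose spatial realization is a rational nilpotent space by standard Sullivan theory \cite{FHT,Su}.

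The principal obstacle is verifying that each component $\langle L_n^{(z)}\rangle$ is genuinely rational nilpotent in the sense of the preceding paragraph: one must check that $\pi_1\cong H_0(L_n^{(z)})$—endowed with the BCH product as in \propref{componentes}—is a $\mathbb Q$-complete nilpotent (indeed unipotent) group and that its action on each higher $\pi_k \cong H_{k-1}(L_n^{(z)})$ is nilpotent. Both are consequences of the nilpotency of $L_n^{(z)}$ as a graded Lie algebra, which is inherited from $L_n$, but the careful translation between the DGL side and the Sullivan side is the only step that is not essentially formal.
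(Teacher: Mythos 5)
Your proof is correct and follows essentially the same route as the paper: the same tower of finite-dimensional nilpotent quotients $\widehat{\mathbb L}(V)/\mathbb L^{>n}$, the same use of the right-adjointness of $\langle\,\cdot\,\rangle$ to commute the realization with the inverse limit, and the same final appeal to \thmref{lestrella} to identify each stage as a rational nilpotent space. Your additional detour through \thmref{localizar}, isolating the non-negatively graded localizations $L_n^{(z)}$ before invoking \thmref{lestrella}, is a careful refinement of a step the paper dispatches in a single sentence.
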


\begin{proof} Write $(\lasu (K), \partial_a) = (\widehat{\mathbb L}(V), \partial)$. Then
$$(\lasu (K), \partial_a) = \varprojlim_n (\widehat{\mathbb L}(V)/\mathbb L^{>n}, \overline{\partial}).$$
Since the realization functor is a right adjoint, it commutes with limits and
$$\langle \lasu (K), \partial_a\rangle = \varprojlim_n \langle\,\, (\widehat{\mathbb L}(V)/\mathbb L^{>n}, \overline{\partial})\,\,\rangle.$$
Now, since $(\widehat{\mathbb L}(V)/\mathbb L^{>n}, \overline{\partial})$ is a finite dimensional nilpotent Lie algebra,  its geometric realization is a rational nilpotent space in view, for instance, of Theorem \ref{lestrella}.
\end{proof}

\section{The Malcev completion of the fundamental group}
 This section is entirely devoted to the proof of the following result.
\begin{theoremb}\label{cor:QMalcev}
Let $K$ be a connected, finite simplicial complex. Then,
$H_0(\lasu(K), \partial_a)$ is the Malcev Lie completion of the fundamental group $\pi_1(K)$.
\end{theoremb}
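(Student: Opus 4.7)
The plan is to reduce Theorem E to the fundamental group of a Sullivan realization, where the identification with the Malcev completion of $\pi_1(K)$ is a classical statement.

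First, I would interpret $H_0(\lasu(K),\partial_{a})$ as a homotopy group. Since the localization $\lasu(K)^{(a)}$ is non-negatively graded and the inclusion $(\lasu(K)^{(a)},\partial_{a})\hookrightarrow(\lasu(K),\partial_{a})$ induces an isomorphism in homology in non-negative degrees, \propref{componentes} applied to $\lasu(K)^{(a)}$ identifies $H_{0}(\lasu(K),\partial_{a})$ with $\pi_{1}\langle \lasu(K)^{(a)}\rangle$, the group structure on the left being the Baker--Campbell--Hausdorff product. By \thmref{localizar}, $\langle \lasu(K)^{(a)}\rangle$ is precisely the path component of $a$ in $\langle \lasu(K)\rangle$.

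Next, I would transport the computation to the commutative side. By \corref{cor:minimodel}, $(\lasu(K),\partial_{a})$ has a finite-type non-negatively graded minimal model of the form $(\widehat{\mathbb L}(R),\partial)$, to which \thmref{lestrella} applies to give
$$\langle \lasu(K)^{(a)}\rangle\simeq\langle \mathcal{C}^{*}(\lasu(K)^{(a)})\rangle_{S}.$$
By \thmref{thm:quillenetlasu}, $\mathcal{C}^{*}(\lasu(K)^{(a)})$ is quasi-isomorphic to the Sullivan minimal model $(\Lambda V,d)$ of $K$, hence to $A_{PL}(K)$. Since Sullivan realization sends quasi-isomorphisms of finite-type CDGA's to homotopy equivalences, one obtains
$$\pi_{1}\langle \lasu(K)^{(a)}\rangle\cong\pi_{1}\langle A_{PL}(K)\rangle_{S}.$$

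Finally, I would invoke Sullivan's classical theorem (see \cite{Su}, \cite{FHT}): for any connected finite simplicial complex $K$, the fundamental group $\pi_{1}\langle A_{PL}(K)\rangle_{S}$ is naturally isomorphic to the Malcev Lie completion of $\pi_{1}(K)$. Chaining the three isomorphisms would yield the statement.

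The hard part will be verifying that the group structures match throughout the chain of isomorphisms. On both ends the group law is of Baker--Campbell--Hausdorff type---explicitly so via \propref{componentes} on the Lie side, and via the grouplike--primitive correspondence in the completed group algebra on the Sullivan side---so compatibility is built into the constructions, but one still has to trace it through the equivalence of \thmref{lestrella} and through the identification of $\mathcal{C}^{*}(\lasu(K)^{(a)})$ with a Sullivan model in \thmref{thm:quillenetlasu}. A secondary technical point is ensuring that the finite-type hypothesis needed in \thmref{lestrella} is met by the minimal model provided by \corref{cor:minimodel}, which holds because $\widetilde H(K;\Q)$ is finite dimensional.
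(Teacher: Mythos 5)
Your overall strategy --- identify $H_0(\lasu(K),\partial_a)$ with $\pi_1$ of a realization via \propref{componentes} and \thmref{localizar}, then transport the computation to the Sullivan side via \thmref{lestrella} --- is genuinely different from the paper's, which never realizes anything and computes $H_0$ purely algebraically. But as written your argument has a gap at the step where you apply \thmref{lestrella}: that theorem requires $L$ to be a \emph{finite type} DGL, i.e.\ degree-wise finite dimensional (this is what makes $L^\sharp$ a Lie coalgebra and $\mathcal C^*(L)$ a CDGA in its proof). The minimal model $(\widehat{\mathbb L}(R),\partial)$ of \corref{cor:minimodel} has $R$ finite dimensional, but that is not the same thing: as soon as $\dim H_1(K;\mathbb Q)\ge 2$, the degree-zero part $\widehat{\mathbb L}(R)_0$ is an infinite-dimensional completed free Lie algebra, so neither $\lasu(K)^{(a)}$ nor its minimal model is of finite type. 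This is not the peripheral technicality you describe; it is exactly what makes the theorem harder than the simply connected case, because the Malcev Lie algebra of $\pi_1(K)$ is in general infinite dimensional and all dualization arguments break in degree $0$. The same problem infects the step ``$\mathcal C^*(\lasu(K)^{(a)})$ is quasi-isomorphic to $(\land V,d)$'': \thmref{thm:quillenetlasu} gives a DGL quasi-isomorphism with $\mathcal E(\land V,d)^\sharp$, and pushing this through $\mathcal C^*$ again needs finite type. Finally, the ``classical theorem'' you invoke at the end --- that $\pi_1\langle A_{PL}(K)\rangle_S$ is the Malcev completion of $\pi_1(K)$ for an arbitrary connected finite complex --- is itself established in \cite{FHTII} by precisely the kind of inverse-limit analysis you are trying to bypass, so it cannot be treated as a black box that absorbs the difficulty.

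For contrast, the paper confronts the failure of finite type head on: it writes the minimal Sullivan model as $\varinjlim_n(\land V(n),d)$ with each $V(n)$ of finite type and $L_0(\land V(n),d)=L_0(\land V,d)/L_0^n(\land V,d)$, settles the finite type case by Neisendorfer's theorem together with the observation that $\mathbb L(s^{-1}\overline C)\hookrightarrow\widehat{\mathbb L}(s^{-1}\overline C)$ is a quasi-isomorphism, and then proves that $H_0$ commutes with the inverse limit by a $\varprojlim^1$ argument plus a nontrivial lemma ensuring that a compatible family of boundaries lifts to a boundary in the limit. If you want to salvage your realization-theoretic route, you would have to apply \thmref{lestrella} only to the finite-dimensional nilpotent quotients $\widehat{\mathbb L}(R)/\mathbb L^{>n}$ (as the paper itself does in the proposition following \thmref{lestrella}) and then control $\pi_1$ of the resulting tower of simplicial sets --- which lands you back in the same $\varprojlim$ and $\varprojlim^1$ issues, now on the space level. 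You would also still need to verify, as you note, that the BCH group structure of \propref{componentes} matches the group structure of the Sullivan realization through the zig-zag; the paper sidesteps this entirely by working with Lie algebras throughout.
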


\begin{proof} Let $(\land V,d)$ be the minimal Sullivan model of $K$.  Since $K$ is connected,  $V = V^{> 0}$. The graded vector space $L_k(\land V,d) = (V^{k+1})^{\sharp}$ equipped with the quadratic part of the differential $d$ inherits the structure of a graded Lie algebra \cite[\S2]{FHTII}, whose component $L_0(\land V,d)$ is the Lie algebra associated to the Malcev completion of $\pi_1(K)$ \cite[Theorem 7.5]{FHTII}.

Suppose first that $V$ is a finite type vector space. In that case $(\land V,d)$ is the dual of a commutative differential graded coalgebra $(C,d)$. Thus, by \cite{nei}, $H_0(  {\mathcal L}(C,d)) \cong L_0(\land V,d)$. On the other hand, the natural map ${\mathcal L}(C,d) \to \left({\mathcal E}(\land V,d)\right)^\sharp$ is the inclusion
 $$\varphi {\colon} (\mathbb L(s^{-1}\overline{C}),d)\hookrightarrow (\widehat{\mathbb L}(s^{-1}\overline{C}),d).$$
 Finally, since $ (s^{-1}V)^\sharp$ injects in the homology of the two DGL's, and is in an isomorphism in both cases, the injection $\varphi$ is a quasi-isomorphism.

 In the general case, $(\land V,d)$ is the increasing union of finite type CDGA's of the form $(\land V(n),d)$, i.e.,
 $$(\land V,d) = \varinjlim_n (\land V(n),d),$$
 where $$L_0(\land V(n),d) = L_0(\land V,d)/ L_0^n(\land V,d),$$
 $$L_0^2(\land V,d) = [L_0(\land V,d), L_0(\land V,d)], \quad L_0^n(\land V,d) = [L_0(\land V,d), L_0^{n-1}(\land V,d)],\quad n> 2.$$

 Moreover,
 $$L_0(\land V,d) = \varprojlim_n L_0(\land V,d)/ L_0^n(\land V,d).$$

We write
$$\bigl(\widehat{\mathbb L}(W(n)), \partial\bigr) = {\mathcal E}(\land V(n),d)^\sharp.$$ It follows that ${\mathcal E}(\land V,d)= \varinjlim_n {\mathcal E}(\land V(n),d)$
 and
 $$(\lasu_K, \partial_a)  {\cong} \varprojlim_n \left( \, {\mathcal E}(\land V(n),d)\right)^{\sharp} = \varprojlim_n \,(\widehat{\mathbb L}(W(n)), \partial).$$

By construction $W(n) = W(n)_{\geq 0}$ is a finite type graded vector space. We write $Y_n = \widehat{\mathbb L}(W(n))_0$, $X_n = \widehat{\mathbb L}(W(n))_1$ and $X_n^s = {\mathbb L}^s (W(n))_1$. We denote by $\rho_n {\colon} X_n\to X_{n-1}$ and $\sigma_n {\colon} Y_n\to Y_{n-1}$ the surjective morphisms induced by the inclusions $\land V(n-1)\hookrightarrow \land V(n)$. For $r<q$ we also denote by $\rho_{qr}{\colon} X_q\to X_r$ the composition $\rho_{r+1}\circ \cdots \circ \rho_q$.

We consider then the   short exact sequences of towers of vector spaces
$$\xymatrix{\mbox{} & 0\ar[d] & 0\ar[d] \\ \cdots\mbox{}\ar[r] & \partial(X_n) \ar[r]\ar[d] & \partial(X_{n-1}) \ar[r]\ar[d] & \mbox{}\cdots\\
\cdots\mbox{}\ar[r] & Y_n \ar[r]\ar[d] & Y_{n-1} \ar[r]\ar[d] & \mbox{}\cdots\\
\cdots\mbox{}\ar[r] & H_0\bigl(\widehat{\mathbb L}(W(n)), \partial\bigr) \ar[r]\ar[d] & H_0\bigl(\widehat{\mathbb L}(W(n-1)), \partial\bigr) \ar[r]\ar[d] & \mbox{}\cdots\\
\mbox{} & 0 & 0}
$$
This induces an exact sequence
$$0 \to \varprojlim_n \partial (X_n) \to \varprojlim_n Y_n \to \varprojlim_n H_0\bigl(\widehat{\mathbb L}(W(n)),\partial\bigr) \to \varprojlim^{\mbox{}\hspace{6mm}1}_n \partial(X_n).$$
Since every $\rho_n {\colon} X_n\to X_{n-1}$ is surjective, the induced map $\partial(X_n)\to \partial(X_{n-1})$ is also surjective and so $\varprojlim^1 \partial(X_n) = 0$.

Now, by the next Lemma, $\varprojlim \partial(X_n) = \partial(\varprojlim X_n)$. Therefore,
$$H_0(\lasu (K), \partial_a) \cong \varprojlim_n H_0\bigl(\widehat{\mathbb L}(W(n)), \partial\bigr) = \varprojlim_n L_0(\land V,d)/L_0^n(\land V,d)$$
is the Malcev Lie completion of $\pi_1(K)$.
\end{proof}

\begin{lemma} With the notation in the previous proof, let $a= \varprojlim (a_n)\in Y$. If for each $n$, $a_n=\partial c_n$ for some $c_n$,    then there is an element $b = \varprojlim (b_n)\in X$ such that $\partial b_n=a_n$ for all $n$.
\end{lemma}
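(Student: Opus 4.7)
The plan is to build the compatible family $(b_n)$ by induction on $n$, correcting at each step the failure of arbitrary preimages to commute with the tower maps. Set $b_1 := c_1$. Assuming $b_1,\dots,b_{n-1}$ have been constructed with $\partial b_k = a_k$ and $\rho_k b_k = b_{k-1}$, pick any $c_n \in X_n$ with $\partial c_n = a_n$. Then
$$
\partial(\rho_n c_n - b_{n-1}) = \sigma_n(\partial c_n) - \partial b_{n-1}=\sigma_n(a_n) - a_{n-1} = 0,
$$
so $\zeta := \rho_n c_n - b_{n-1}$ is a $\partial$-cycle in $X_{n-1}$. If I can produce a $\partial$-cycle $z_n \in X_n$ with $\rho_n z_n = \zeta$, then $b_n := c_n - z_n$ satisfies both $\partial b_n = a_n$ and $\rho_n b_n = b_{n-1}$, closing the induction, and the desired element is $b := (b_n) \in \varprojlim_n X_n = X$.

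The real content is therefore the \emph{cycle-lifting claim}: every $\partial$-cycle in $X_{n-1}$ is the image under $\rho_n$ of some $\partial$-cycle in $X_n$. I would deduce it from the short exact sequence of chain complexes
$$
0 \longrightarrow \ker \rho_n \longrightarrow \bigl(\widehat{\mathbb L}(W(n)),\partial\bigr) \stackrel{\rho_n}{\longrightarrow} \bigl(\widehat{\mathbb L}(W(n-1)),\partial\bigr) \longrightarrow 0,
$$
available because $\rho_n$ is surjective in every degree, together with the computation $H_k(\widehat{\mathbb L}(W(n)),\partial) \cong L_k(\land V(n),d) = (V(n)^{k+1})^\sharp$ valid because $V(n)$ is of finite type. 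Under this identification, $H_*(\rho_n)$ is dual to the inclusion $V(n-1)\hookrightarrow V(n)$, hence surjective. Granting this, choose a cycle $z' \in X_n$ whose class maps to $[\zeta]$ in $H_1(X_{n-1})$; then $\rho_n z' - \zeta = \partial w$ for some $w \in Y_{n-1}$, and lifting $w$ to some $w' \in Y_n$ via the surjection $\sigma_n$ gives a genuine cycle $z_n := z' - \partial w'$ in $X_n$ with $\rho_n z_n = \zeta$.

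The main obstacle is precisely the identification $H_1(\widehat{\mathbb L}(W(n)),\partial) \cong (V(n)^2)^\sharp$ together with the fact that the map induced by $\rho_n$ on $H_1$ agrees with the dual of the tautological inclusion $V(n-1)^2 \hookrightarrow V(n)^2$. This is a naturality statement about the functor $\mathcal{E}$ applied to the tower of Sullivan subalgebras, already invoked implicitly in the main proof, and is where one essentially uses that each $V(n)$ is of finite type. Once this is in hand, all the remaining manipulations (constructing $z_n$, forming $b_n$, passing to the inverse limit) are routine.
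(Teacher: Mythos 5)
Your argument is correct in substance but follows a genuinely different route from the paper's. You reduce the lemma to a cycle-lifting property along the tower maps $\rho_n$, and then to the surjectivity of $H_1(\rho_n)$, which you obtain from the natural identification $H_k\bigl(\widehat{\mathbb L}(W(n)),\partial\bigr)\cong (V(n)^{k+1})^{\sharp}$ together with the fact that $\rho_n$ induces on homology the dual of the inclusion $V(n-1)\hookrightarrow V(n)$. That identification, in all degrees and not only in degree $0$, is indeed asserted in the first part of the proof of the theorem (where $\varphi$ is shown to be a quasi-isomorphism for the finite type $V(n)$), and its naturality in the tower is standard Quillen--Neisendorfer duality, so the input you need is available. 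The paper instead gives an argument that stays entirely inside the tower of complete free Lie algebras and uses no homology computation: it works with the word-length filtration, exploits that each $\mathbb L^{r}(W(n))_1$ is finite dimensional, and builds the compatible primitives word-length by word-length, using that a decreasing sequence of non-empty affine subspaces of a finite-dimensional vector space has non-empty intersection (a Mittag--Leffler type argument). Your proof is shorter and more conceptual but imports the degree-one homotopy computation and its naturality; the paper's needs only the surjectivity of the $\rho_n$ and the finite type of the $W(n)$, at the price of a more intricate handling of the double limit over $n$ and over word length.

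One small correction: in your cycle-lifting step the element $w$ with $\rho_n z'-\zeta=\partial w$ has degree $2$, so it lives in $\widehat{\mathbb L}(W(n-1))_2$ and not in $Y_{n-1}=\widehat{\mathbb L}(W(n-1))_0$, and $w'$ is a degree-$2$ lift through $\rho_n$ (which is surjective in every degree, being dual to an injection of finite type Lie coalgebras). With that adjustment, the standard fact that a cycle downstairs lifts to a cycle upstairs precisely when its class lies in the image of $H_1(\rho_n)$ goes through exactly as you describe.
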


\begin{proof} The element $a_1$ can be written as $a_1= \sum_{q=p}^\infty a_1^q$ with $a_1^q\in {\mathbb L}^q(W(1))_0$ and $a_1^p\neq 0$.  For fixed $n$ we denote by $f(n)$ the maximum integer such that there is an element $e \in \widehat{\mathbb L}^{\geq f(n)}(W(n))_1$ with $\partial e = a_n$. Since $d\circ \rho_{n1}(e ) = a_1$, we have $f(n) \leq p$. As we can replace $e_1 ,\dots , e_{n-1}$ by the images of $e_n$, the function $f(n)$ is decreasing and we denote by $r$ its limit.

We construct a sequence $\beta^r = (\beta_p^r)$ with the following properties:
\begin{enumerate}
\item[(i)] $\beta_p^r \in X_p^r$ and $ \rho_p(\beta_p^r) = \beta_{p-1}^r$.
\item[(ii)] There exist elements $e_p\in X_p$ with $\partial e_p= a_p$ and $e_p-\beta_p^r \in X_p^{>r}$.
\end{enumerate}
Suppose we have constructed $\beta_1^r, \dots , \beta_{n-1}^r$ with the two preceding properties for $p<n$ and
\begin{enumerate}
\item[(iii)] for all $q$ there exists an element $e_q\in X_q^{\geq r}$ with $\partial e_q = a_q$ and $\rho_{q(n-1)}(e_q)-\beta_{n-1}^r\in X_{n-1}^{>r}$.
\end{enumerate}
Then, we denote by $q_{n-1}^r {\colon} X_{n-1}\to X_{n-1}^r$ the projection on the component of length degree $r$ and we write
$$W_q= q^r_n\rho_{qn}\, \left( \left[ \partial^{-1}(a_q)\cap X_q^{\geq r}\right] \cap \left( q^r_{n-1}\circ \rho_{q(n-1)}\right)^{-1} (\beta_{n-1}^r)\right).$$
The $W_q$ form a decreasing sequence of affine subspaces of $X_n^r$. Since $X_n^r$ is finite dimensional and the $W_q$   are non empty, their intersection is not empty and we choose an element $\beta_n^r$ in this intersection. This ends the construction of $\beta^r= (\beta_n^r)$.

Now consider $\alpha' = \alpha - \partial\beta^r$. We proceed in the same way for $\alpha'$ and note that by construction there is an element $e'_n=   e_n- \beta_n^r \in {\mathbb L}^{\geq r+1}$ with $\partial e_n' = a_n'$. This gives an element $\beta^{r+1}$, and by induction elements $\beta^q$ for $q\geq r$.
Now
$\sum_q \beta^q = (\sum_q\beta_n^q)$ is a well defined element in $\varprojlim_n \widehat{\mathbb L}(W(n))$ and by construction $\partial (\sum_q\beta^q) = a$.
\end{proof}

%%%%%%%%%%%%%%%%%%%%%%%%%%%%%%
\providecommand{\bysame}{\leavevmode\hbox to3em{\hrulefill}\thinspace}

{\small
\vspace{5mm}\noindent {\sc Departamento de Algebra, Geometr\'ia y Topolog\'ia,
Universidad de M\'alaga,
Ap. 59,
29080-M\'alaga,
Espa\~na}

{\it E-mail address:} ubuijs@uma.es
\\[2mm]
{\sc  Institut de Math\'ematiques et Physique,
Universit\'e Catholique de Louvain-la-Neuve,
Louvain-la-Neuve,
Belgique}

{\it E-mail address:}  Yves.felix@uclouvain.be
\\[2mm]
{\sc Departamento de Algebra, Geometr\'ia y Topolog\'ia,
Universidad de M\'alaga,
Ap. 59,
29080-M\'alaga,
Espa\~na}

{\it E-mail address:} aniceto@uma.es
\\[2mm]{\sc D\'epartement de Mathematiques,
         UMR-CNRS 8524,
         Universite de Lille~1,
         59655 Villeneuve d'Ascq Cedex,
         France }

         {\it E-mail address:} Daniel.Tanre@univ-lille1.fr}
%%%%%%%%%%%%%%%%%%
\end{document}